\newcommand{\aspas}[1]{``{#1}''}
\newtheorem{theorem}{Theorem}[section]
\newtheorem{lemma}[theorem]{Lemma}
\newtheorem{example}[theorem]{Example}
\newtheorem{proposition}[theorem]{Proposition}
\theoremstyle{definition}
\newtheorem{definition}[theorem]{Definition}
\newtheorem{remark}[theorem]{Remark}
\newtheorem{corollary}[theorem]{Corollary}
\numberwithin{equation}{section}
\begin{document}


\renewcommand{\bf}{\bfseries}
\renewcommand{\sc}{\scshape}

\title[(Injective) hom-complexity between graphs]%
{(Injective) hom-complexity between graphs}

\author[C. A. Ipanaque Zapata]{Cesar A. Ipanaque Zapata}
\address[C. A. Ipanaque Zapata]{Departamento de Matem\'atica - IME-USP,
Caixa Postal 66281 - Ag. Cidade de S\~{a}o Paulo,
CEP: 05314-970 - S\~{a}o Paulo - SP - Brasil}
\email{cesarzapata@usp.br}

\author[J.A. Aguirre Enciso]{Josué A. Aguirre Enciso}%
\address[J.A Aguirre Enciso]{Departamento de Ciencias, Pontificia Universidad Cat\'{o}lica del Per\'{u},   Av. Universitaria 1801, San Miguel, 15088,  Per\'{u}\\ Facultad de Ciencias Matemáticas, Universidad Nacional Mayor de San Marcos,   Lima 15081}
\email{josue.aguirre@pucp.edu.pe}
\email{jaguirree@unmsm.edu.pe} %

\author[W.F. Cuba Ramos]{Wilman Francisco Cuba Ramos}%
\address[W.F. Cuba Ramos]{Facultad de Ciencias Matemáticas, Universidad Nacional Mayor de San Marcos, Lima 15081, Per\'{u}}
\email{wfcubar@gmail.com} %

\subjclass[2020]{Primary 05C20, 05C15, 05C60; Secondary 05C51, 05C90.}   

\keywords{Graph, homomorphism, chromatic number, (injective) hom-complexity, covering number, clique, $k$-partite graph, clique covering number, $\ell$-particity, $\ell$-partite dimension}

\begin{abstract} We present the notion of hom-complexity, $\text{C}(G;H)$, for two graphs $G$ and $H$, along with basic results for this numerical invariant. This invariant $\text{C}(G;H)$ is a number that measures the \aspas{complexity} of the question: when is there a homomorphism $G\to H$? More precisely, $\text{C}(G;H)$ is the least positive integer $k$ such that there are $k$ different subgraphs $G_j$ of $G$ such that $G=G_1\cup\cdots\cup G_k$, and for each $G_j$, there is a homomorphism $G_j\to H$. Likewise, we introduce the notion of injective hom-complexity, $\text{IC}(G;H)$. The (injective) hom-complexity is a graph invariant. Additionally, these invariants can be used to show the nonexistence of homomorphisms. We explore the sub-additivity of (injective) hom-complexity and study products. 

We describe bounds for the hom-complexity in terms of chromatic number $\chi$ and clique number $\omega$. We provide the formula \[\text{C}(G;H)=\lceil\log_{\chi(H)}\chi(G)\rceil\]  whenever $\omega(H)=\chi(H)$. For example, we obtain $\text{C}(G;K_\ell)=\lceil\log_{\ell}\chi(G)\rceil$. Moreover, we discuss a connection between the (injective) hom-complexity and several well-known covering numbers. For instance, we provide a lower bound for the clique covering number in terms of the injective hom-complexity. Additionally, we show that the hom-complexity $\mathrm{C}(G;K_{\ell})$ coincides with the $\ell$-particity $\beta_\ell(G)$ of $G$, and the hom-complexity $\mathrm{C}(K_n;K_{2})$ coincides with the bipartite dimension $\mathrm{d}(K_n)$ of $K_n$. As a consequence, we recover the well-known formulas $\beta_\ell(G)=\lceil\log_{\ell}\chi(G)\rceil$ and $\mathrm{d}(K_n)=\lceil\log_{2}n\rceil$.  
\end{abstract}
\maketitle

\section{Introduction}\label{secintro}%
In this article, the term \aspas{graph} refers to a graph (either directed or undirected, depending on the context) that does not allow multiple edges (i.e., more than one edge connecting two vertices). As usual, $K_n$ denotes the complete graph on $n$ vertices. A simple graph $G$ admits a $k$-colouring if and only if there exists a vertex-surjective homomorphism $G\to K_k$, as explained in Section~\ref{sec:pre}. Likewise, $G$ is $\ell$-partite if and only if there exists a vertex-surjective homomorphism $G\to K_\ell$ (see Definition~\ref{defn:l-partite}(1)). For more details, see Section~\ref{sec:pre}. The symbol $\lceil m\rceil$ denotes the least integer greater than or equal to $m$, while $\lfloor m\rfloor$ denotes the greatest integer less than or equal to $m$. 

\medskip Let $G$ and $H$ be graphs. A \textit{homomorphism} of $G$ to $H$, written as $f:G\to H$, is a mapping $f:V(G)\to V(H)$ such that $f(u)f(v)\in E(H)$ whenever $uv\in G$. Thus, homomorphisms of undirected graphs preserve adjacency, while homomorphsims of directed graphs also preserve the directions of the arcs. The symbol $G\to H$ means that there is a homomorphism from $G$ to $H$, and in this case, we say that $G$ is \textit{$H$-colourable}; otherwise, we write $G\not\rightarrow H$. A \textit{$k$-colouring} of a simple graph $G$ is an assignment of $k$ colours to the vertices of $G$ such that adjacent vertices have different colours. Observe that the statement \aspas{$G$ admits a $k$-colouring} is not equivalent to \aspas{$G$ is $K_k$-colourable}. For instance, $K_2$ is $K_3$-colourable (by the inclusion homomorphism), but $K_2$ does not admit a $3$-colouring. 

\medskip We introduce the notion of hom-complexity between two graphs $G$ and $H$, denoted by $\text{C}(G;H)$ (Definition~\ref{defn:complexity}), along with its basic results. More precisely, $\text{C}(G;H)$ is defined as the least positive integer $k$ such that there are $k$ distinct subgraphs $G_j$ of $G$ with $G=G_1\cup\cdots\cup G_k$, and over each $G_j$, there exists a homomorphism $G_j\to H$. For instance, we have $\text{C}(G;H)=1$ if and only if there is a homomorphism $G\to H$. Likewise, we introduce the notion of injective hom-complexity, $\text{IC}(G;H)$. The invariant $\text{C}(G;H)$, in view of Proposition~\ref{prop:hom-cov-number}, can also be called the \textit{$H$-covering number} of $G$. In contrast, $\text{IC}(G;H)$ cannot be viewed as a covering number, as observed in Remark~\ref{rem:in-contrast-ic}. 

\medskip The considerable interest that $\text{C}(G;H)$ has received is motivated by a fundamental desire to present a well-defined methodology to address the \aspas{complexity} of the data migration problem \cite{hussein2021}, \cite{spivak2012}: This invariant represents the effective size of a data migration from $G$ to $H$ in a data migration process, where the graphs $G$ and $H$ represent data models (schemes). In other words, we consider the initial data model $\mathfrak{G}$ and the desired data model $\mathfrak{H}$, where each data model (schema) consists of tables and columns. Data migration is the process of moving data from $\mathfrak{G}$ to $\mathfrak{H}$, where each table is mapped to a table and each column to a column. Each schema is represented as a graph, with each table as a vertex and each column as an edge (cf. \cite{spivak2012}). Thus, we obtain our graphs $G$ and $H$, and it can be said a data migration from $\mathfrak{G}$ to $\mathfrak{H}$ is a homomorphism from $G$ to $H$. This leads to the intuitive interpretation of $\text{C}(G;H)$ as the effective size of data migration (i.e., finding the least number of data migrations while decomposing the initial data model). We note that a previous version of the hom-complexity $\text{C}(-;-)$ appeared in the Ph.D. thesis (in Spanish) \cite{wilman} of the third author. 
 
\medskip On the other hand, given two graphs $G$ and $H$, it is natural to pose the following question: When is there a homomorphism $G\to H$? This question represents a significant challenge in graph theory (see \cite{hell1990}). In fact, as shown in \cite[Theorem 1, p. 93]{hell1990}, this problem is NP-complete whenever $H$ is not bipartite. Hence, calculating $\text{C}(G;H)$ is NP-complete in general.

\medskip Among the central problems in hom-complexity theory are the following:
\begin{itemize}
    \item computing the hom-complexity $\text{C}(G;H)$,
    \item realizing $\text{C}(G;H)$, that is, finding an explicit optimal quasi-homomorphism $\mathcal{M}=\{f_i:G_i\to H\}_{i=1}^k$ from $G$ to $H$ (see Section~\ref{sec:one}). 
\end{itemize} Likewise, for injective hom-complexity. 

\medskip We present the following remark to avoid confusion between the name hom-complexity, hom-complex, and computational complexity.

\begin{remark}[Hom-complexity vs. Hom-complex vs. Computational Complexity]
  Given two simple graphs $G$ and $H$, the \textit{Hom-complex} $\mathrm{Hom}(G,H)-$whose elements are all homomorphisms from $G$ to $H-$was introduced by Lov\'{a}sz to provide lower bounds for the chromatic number  of graphs \cite{lovasz1978}, \cite[Definition 1.2, p. 286]{badson2006}. Observe that $\text{C}(G;H)\geq 2$ if and only if $\mathrm{Hom}(G,H)=\varnothing$. Thus, the Hom-complex is a topological space (a complex), while the hom-complexity is a numerical value, in the spirit of \textit{computational complexity}, which refers to the difficulty of solving a problem in terms of the number of computational operations required.   
\end{remark}    

\medskip We discuss a connection between (injective) hom-complexity and some well-known covering numbers \cite{schwartz2022}. For instance, we prove that the hom-complexity $\mathrm{C}(G;K_{\ell})$ coincides with the $\ell$-particity $\beta_\ell(G)$ of $G$\footnote{Although \aspas{$K_\ell$-colourable} and \aspas{$\ell$-partite} are not the same thing, as explained in  Definition~\ref{defn:l-partite}(1).}, and the hom-complexity $\mathrm{C}(K_n;K_{2})$ coincides with the bipartite dimension $\mathrm{d}(K_n)$ of $K_n$\footnote{Although \aspas{$K_2$-colourable} and \aspas{biclique} are not the same thing, as explained in  Definition~\ref{defn:l-partite}.} (Theorem~\ref{prop:biparticity-complexity-k2}). As a consequence, we recover the formulas $\beta_\ell(G)=\lceil\log_{\ell}\chi(G)\rceil$ obtained by Harary-Hsu-Miller in \cite{harary1977}\footnote{The original proof of this was given for $\ell=2$ (see \cite[Theorem, p. 131]{harary1977}), and the authors observed that it holds for any $\ell$ (see \cite[Observation A, p. 132]{harary1977}).}, and $\mathrm{d}(K_n)=\lceil\log_{2}n\rceil$ obtained by Fishburn-Hammer in \cite[Lemma 1, p. 130]{fishburn1996}. These formulas are special cases of our formula $\mathrm{C}(G;H)=\lceil\log_{\chi(H)}\chi(G)\rceil$, which holds whenever $\omega(H)=\chi(H)$ (Corollary~\ref{cor:omega-equal-chi}).

\medskip As the Remark~\ref{rem:bipa-dim-dif-complex} suggests, the invariants $\mathrm{d}(-)$ and $\mathrm{C}(-;K_{2})$ may not coincide in general. This is significant because the hom-complexity $\mathrm{C}(-;K_{2})$ is more flexible in terms of handling subgraph structures, whereas $\mathrm{d}(-)$ seems more strictly defined by the biclique structure of the graph. 

\medskip The main results of this work are:
\begin{itemize}
    \item Introduction of the concepts of hom-complexity $\text{C}(G;H)$ and injective hom-complexity $\text{IC}(G;H)$ for two graphs $G$ and $H$ (Definition~\ref{defn:complexity}).
     \item A triangular inequality (Theorem~\ref{thm:inequality-three-graphs}). 
    \item We establishes a close connection between the hom-complexities and homomorphisms (Theorem~\ref{prop:complexity-subgraphs}). In particular, this shows  that (injective) hom-complexity is a graph invariant. It also can be used to show the nonexistence of homomorphisms. Furthermore, it implies that hom-complexity can be restricted to cores. 
    \item A formula for $\mathrm{IC}(G;K_2)$ (Theorem~\ref{thm:injective-category-compute}).
    \item Sub-additivity (Theorem~\ref{thm:category-union}).
    \item Product inequality (Theorem~\ref{thm:product-inequality-complexity}).
    \item A lower bound (Theorem~\ref{thm:lower-bound}).
    \item An upper bound for hom-complexity (Theorem~\ref{thm:general-upper-bound}).
    \item The formula $\mathrm{C}(G;H)=\lceil\log_{\chi(H)}\chi(G)\rceil$ whenever $\omega(H)=\chi(H)$ (Corollary~\ref{cor:omega-equal-chi}). In particular, we obtain $\mathrm{C}(G;K_\ell)=\lceil\log_{\ell}\chi(G)\rceil$ (Example~\ref{exem:complexity-kj-k2}). 
    \item An explicit construction of an optimal quasi-homomorphism $\{\hat{f}_i:\hat{G}_i\to K_\ell\}_{i=1}^{k}$ from $G$ to $K_\ell$ such that each $\hat{G}_i$ is a $\ell$-partite subgraph of $G$ (Theorem~\ref{prop:covering-l-partite}).
    \item The hom-complexity $\mathrm{C}(G;K_{\ell})$ coincides with the $\ell$-particity of $G$, and the hom-complexity $\mathrm{C}(K_n;K_{2})$ coincides with the bipartite dimension of $K_n$ (Theorem~\ref{prop:biparticity-complexity-k2}). 
\end{itemize} 

The paper is organized as follows: We begin with a brief review of graphs and homomorphisms (Section~\ref{sec:pre}). In Section~\ref{sec:one}, we introduce the notions of hom-complexity $\text{C}(G;H)$ and injective hom-complexity $\text{IC}(G;H)$ for two graphs $G$ and $H$ (Definition~\ref{defn:complexity}). Theorem~\ref{prop:complexity-subgraphs} establishes a close connection between the hom-complexities and homomorphisms. Proposition~\ref{prop:hom-complexity-core} shows that hom-complexity can be restricted to cores. A formula for $\mathrm{IC}(G;K_2)$ is presented in Theorem~\ref{thm:injective-category-compute}. Furthermore, Theorem~\ref{thm:category-union} demonstrates the sub-additivity of (injective) hom-complexity. Theorem~\ref{thm:product-inequality-complexity} establishes a product inequality. A lower bound in terms of chromatic number is provided in  Theorem~\ref{thm:lower-bound}. A key result for obtaining an upper for hom-complexity is Proposition~\ref{prop:upper-bound}, leading to the  upper bound presented in Theorem~\ref{thm:general-upper-bound}. The formula $\mathrm{C}(G;H)=\lceil\log_{\chi(H)}\chi(G)\rceil$ whenever $\omega(H)=\chi(H)$ is presented in Corollary~\ref{cor:omega-equal-chi}. In Theorem~\ref{prop:covering-l-partite}, we provide an optimal quasi-homomorphism $\{\hat{f}_i:\hat{G}_i\to K_\ell\}_{i=1}^{k}$ from $G$ to $K_\ell$ such that each $\hat{G}_i$ is a $\ell$-partite subgraph of $G$. 

\medskip In Section~\ref{sec:cov-number}, we discuss a connection between (injective) hom-complexity and some well-known covering numbers. Proposition~\ref{prop:hom-cov-number} shows that the hom-complexity coincides with a covering number. In contrast, the injective hom-complexity does not coincide with a covering number, as shown in Remark~\ref{rem:in-contrast-ic}. We recall the notions of clique covering number, $\ell$-particity, and $\ell$-partite dimension in Definition~\ref{defn:cc-l-particity-l-partite-dim}. Proposition~\ref{prop:complexity-lower-bound-cn}(1) presents a lower bound for the clique covering number $cc(G)$ of $G$ in terms of the injective hom-complexity. Example~\ref{exam:cc-w2} shows that this bound improves, for some examples, the lower bound $\log_2(|V(G)|+1)$ presented in \cite{gyarfas1990}. In addition, Proposition~\ref{prop:properties-cov-number} provides a new lower bound for $cc(G)$. Theorem~\ref{prop:biparticity-complexity-k2} states that the hom-complexity $\mathrm{C}(G;K_{\ell})$ coincides with the $\ell$-particity $\beta_\ell(G)$ of $G$, and the hom-complexity $\mathrm{C}(K_n;K_{2})$ coincides with the bipartite dimension $\mathrm{d}(K_n)$ of $K_n$. In Proposition~\ref{cor:biparticity-formula}, we recover the formulas for $\beta_\ell(G)$ and $\mathrm{d}(K_n)$. We close this section with Remark~\ref{rem:complexity-sectionalnumber}, which presents a direct connection between the (injective) hom-complexity and the notion of sectional number.  

\medskip In Section~\ref{sec:applica}, we design optimal quasi-homomorphisms for a concrete example of Data Migration. We close this section with Remark~\ref{rem:future-work}, which presents directions for future work based on a new notion, extension, or application of hom-complexity. 

\section{Graphs and homomorphisms revisited} \label{sec:pre}
In this section, we recall some definitions and we fix the notations. We follow the standard notation for graphs as used in \cite{hell2004}. A \textit{digraph} or \textit{directed graph} $G$ is a set $V=V(G)$ of \textit{vertices}, together with a binary relation $E=E(G)$ on $V$, i.e., $E\subseteq V\times V$. The elements $(u,v)$ of $E$ are called the \textit{arcs} of $G$. An \textit{undirected graph} $G$ is a set $V=V(G)$ of vertices together with a set $E=E(G)$ of edges, each of which is a two-element set of vertices. If we allow \textit{loops}, i.e., edges that consist of a single vertex, we have a graph \textit{with loops allowed}. In this article, the term \aspas{graph} refers to either  a directed or undirected graph, depending on the context. 

\medskip Two graphs $G$ and $H$ are equal if $V(G)=V(H)$ and $E(G)=E(H)$. 

\medskip We shall use the usual simplified notation for arcs and edges, where $uv$ represents the arc $(u,v)$, or the edge $\{u,v\}$, depending on the context. A loop at $u$ is written as $uu$. If $uv\in E(G)$, we say that $u$ and $v$ are \textit{adjacent}. If $G$ is an undirected graph, we have $uv=vu$. If $uv$ is an arc in a digraph, we say that $u$ and $v$ are adjacent \textit{in the direction from $u$ to $v$}, or that $u$ is an \textit{in-neighbour} of $v$, and $v$ is an \textit{out-neighbour} of $u$. In any case, $u$ and $v$ are adjacent in a digraph as long as at least one of $uv$ or $vu$ is an arc; in that case, we also say that $u$ and $v$ are \textit{neighbours}. The number of neighbours of $v$ (other than $v$) is called the \textit{degree} of $v$; the number of in-neighbours of $v$, and out-neighbours of $v$ is called the \textit{in-degree} and \textit{out-degree} of $v$, respectively. Furthermore, $\text{indeg}(v)$, $\text{outdeg}(v)$, and $\text{deg}(v)$ denote the in-degree, out-degree, and degree of vertex $v$, respectively. A vertex $v$ is called \textit{isolated} if $\text{deg}(v)=0$.  

\medskip We say that a graph $H$ is a \textit{subgraph} of $G$ if $V(H)\subseteq V(G)$ and $E(H)\subseteq E(G)$. A subgraph $H$ of $G$ is called a \textit{spanning subraph} if $V(H)=V(G)$. Additionally, $H$ is an \textit{induced subgraph} of $G$ if it is a subgraph of $G$ and contains all the arcs (edges) of $G$ among the vertices in $H$. We say that a graph $G$ is \textit{complete} if $E(G)=\{\{u,v\}:~u,v\in V(G) \text{ with } u\neq v\}$. A \textit{clique} in a graph $G$ is a complete subgraph of $G$. 

\medskip Let $G$ and $H$ be graphs. A \textit{homomorphism} of $G$ to $H$, written as $f:G\to H$, is a mapping $f:V(G)\to V(H)$ such that $f(u)f(v)\in E(H)$ whenever $uv\in G$. Hence, homomorphisms of undirected graphs preserve adjacency, while homomorphsims of digraphs also preserve the directions of the arcs. The symbol $G\to H$ indicates that there exists a homomorphism from $G$ to $H$, and in this case, we say that $G$ is \textit{$H$-colourable}; otherwise, we write $G\not\rightarrow H$ (cf. \cite[p. 3]{hell2004}). Note that if we remove or add isolated vertices from $G$, its $H$-colorability does not change. Given a subgraph $H$ of $G$, the inclusion map $V(H)\hookrightarrow V(G)$ is a homomorphism and is called the \textit{inclusion homomorphism} $G\hookrightarrow H$. 

\medskip A homomorphism $f:G\to H$ is a mapping from $V(G)$ to $V(H)$, but since it preserves adjacency, it also naturally defines a mapping $f^{\#}:E(G)\to E(H)$ by setting $f^{\#}(uv)=f(u)f(v)$ for all $uv\in E(G)$. We call a homomorphism $f:G\to H$ \textit{vertex-injective},   \textit{vertex-surjective}, or \textit{vertex-bijective} if the mapping $f:V(G)\to V(H)$ is injective, surjective, or bijective,  respectively; and \textit{edge-injective}, \textit{edge-surjective}, or \textit{edge-bijective} if the mapping $f^{\#}:E(G)\to E(H)$  is injective, surjective, or bijective, respectively. A homomorphism $f$ is an \textit{injective homomorphism}, a \textit{surjective homomorphism}, or a \textit{bijective homomorphism} if it is both vertex- and edge-injective, surjective, or bijective, respectively. Note that if $f:G\to H$ is a bijective homomorphism, the inverse map $f^{-1}:V(H)\to V(G)$ is a homomorphism from $H$ to $G$, and in this case, we say that $f:G\to H$ is a \textit{graph isomorphism}, and that $G$ and $H$ are \textit{isomorphic}.   

\medskip Note that a homomorphism that is vertex-injective is also edge-injective (but not conversely), and as long as $H$ has no isolated vertices, a homomorphism that is edge-surjective is also vertex-surjective (but not conversely). In other words, injective homomorphisms are the same as vertex-injective homomorphisms, while surjective homomorphisms are, in the absence of isolated vertices, the same as edge-surjective homomorphisms. 

\medskip The following statement is straightforward to verify.

\begin{lemma}\label{lem:hom-degree}
If $f:G\to H$ is an injective homomorphism and $v\in V(G)$. Then: 
\begin{enumerate}
    \item[(1)] $\mathrm{deg}(f(v))\geq \mathrm{deg}(v)$.
     \item[(2)] $\mathrm{indeg}(f(v))\geq \mathrm{indeg}(v)$.
      \item[(3)] $\mathrm{outdeg}(f(v))\geq \mathrm{outdeg}(v)$.
\end{enumerate}
\end{lemma}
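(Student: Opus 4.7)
The plan is to observe that all three statements follow by the same pattern: an injective homomorphism sends the relevant neighbourhood of $v$ injectively into the corresponding neighbourhood of $f(v)$, so counting establishes the inequality. The key point is that by Definition, injective means vertex-injective, and vertex-injectivity is exactly what is needed for the counting argument to work (edge-injectivity alone would not suffice, since two distinct neighbours of $v$ could \emph{a priori} collapse to a single vertex if we only had edge-injectivity).

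More concretely, I would first dispose of (2) and (3) in the directed case, and then deduce (1). For (2), let $N^{-}(v)=\{u\in V(G):uv\in E(G)\}$ denote the set of in-neighbours of $v$ in $G$, and similarly $N^{-}(f(v))$ in $H$. Since $f$ is a homomorphism, for every $u\in N^{-}(v)$ we have $f(u)f(v)\in E(H)$, so $f(u)\in N^{-}(f(v))$. Thus $f$ restricts to a map $N^{-}(v)\to N^{-}(f(v))$, and this restriction is injective because $f$ is vertex-injective on $V(G)$. Therefore $\mathrm{indeg}(v)=|N^{-}(v)|\leq |N^{-}(f(v))|=\mathrm{indeg}(f(v))$. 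The argument for (3) is identical with in-neighbours replaced by out-neighbours $N^{+}(\cdot)$.

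For (1), in the undirected case I would argue directly with the neighbour set $N(v)=\{u\in V(G):uv\in E(G)\}$ by the same restriction-and-injectivity argument. In the directed case, the total neighbourhood of $v$ is $N^{-}(v)\cup N^{+}(v)$ (excluding $v$ itself), and $f$ sends this set injectively into $N^{-}(f(v))\cup N^{+}(f(v))$, yielding the inequality on degrees. A small bookkeeping point: one should handle the possibility of a loop at $v$ consistently with the convention implicit in the definition of $\deg$ (the paper says \textit{``the number of neighbours of $v$ (other than $v$)''}); this is harmless because the injective image of a loop at $v$ is a loop at $f(v)$, so loops either contribute on both sides or on neither.

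I do not expect any genuine obstacle: the entire lemma is a direct unpacking of the definitions, and indeed the authors flag it as \emph{``straightforward to verify''}. The only subtlety worth flagging explicitly in the write-up is the distinction between vertex-injectivity and edge-injectivity, to make clear why the hypothesis \textit{injective} (which, per the preceding paragraph in the paper, means both) is used via its vertex-injectivity component.
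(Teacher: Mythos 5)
Your proof is correct, and since the paper itself gives no proof (it only remarks that the lemma is ``straightforward to verify''), your direct counting argument --- restricting $f$ to the relevant neighbourhood, landing in the corresponding neighbourhood of $f(v)$ by the homomorphism property, and using vertex-injectivity to compare cardinalities --- is precisely the verification the authors intend. Your remarks on why vertex-injectivity (rather than mere edge-injectivity) is the operative hypothesis, and on the treatment of loops, are accurate and do not change the substance.
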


Now, we recall the definition of the union of graphs.

\begin{definition}[Union of Graphs]\label{defn:union-graphs}\cite[Definition 1.2.20, p. 25]{west2001}
 Let $G_1, G_2,\ldots, G_k$ be graphs. The \textit{union} $G_1\cup\cdots\cup G_k$ is defined by $V(G_1\cup\cdots\cup G_k)=V(G_1)\cup\cdots\cup V(G_k)$, and $E(G_1\cup\cdots\cup G_k)=E(G_1)\cup\cdots\cup E(G_k)$.   
\end{definition}
 
Let $G$ be a graph and $A,B$ be subgraphs of $G$ such that $V(A)\cap V(B)=\varnothing$ (and thus $E(A)\cap E(B)=\varnothing$). In this case, the union $A\cup B$ is called the \textit{disjoint union} and is denoted by \begin{align}\label{eq:disjoint-union}
    A\sqcup B.
\end{align} Furthermore, given homomorphisms $f:A\to H$ and $g:B\to H$, the map $f\sqcup g:V(A)\cup V(B)\to V(H)$, defined by \[(f\sqcup g)(v)=\begin{cases}
    f(v),&\hbox{ if $v\in V(A)$,}\\
    g(v),&\hbox{ if $v\in V(B)$}, 
\end{cases}\] is a homomorphism of $A\sqcup B$ to $H$ (using the fact that $E(A)\cap E(B)=\varnothing$).  

\medskip A \textit{simple graph} is an undirected graph without loops (and, of course, without multiple edges). 
 
\medskip A \textit{$k$-colouring} of a simple graph $G$ is an assigment of $k$ colours to the vertices of $G$, such that adjacent vertices have different colours \cite[p. 6]{hell2004}. Denote by $K_k$ the complete graph on vertices $1,2,\ldots,k$, and suppose these integers are used as the \aspas{colours} in $k$-colourings. Then, a $k$-colouring of $G$ can be viewed as a surjective mapping $f:V(G)\to \{1,2,\ldots,k\}$; the requirement that adjacent vertices have distinct colours means that $f(u)\neq f(v)$ whenever $uv\in E(G)$. It remains to observe that the condition $f(u)\neq f(v)$ is equivalent to the condition $f(u)f(v)\in E(K_k)$, and we may conclude that $f:G\to K_k$ is a vertex-surjective homomorphism. On the other hand, if there exists a homomorphism $f:G\to K_k$ (i.e., $G$ is $K_k$-colourable), then $G$ admits a $n$-colouring with $n\leq k$. Note that $n=k$ whenever $f:G\to K_k$ is a vertex-surjective homomorphism (cf. \cite[Proposition 1.7, p. 7]{hell2004}. For example, the inclusion map $\{1,2\}\hookrightarrow\{1,2,3\}$ is a homomorphism from $K_2$ to $K_3$, i.e., $K_2$ is $K_3$-colourable, but $K_2$ does not admit a $3$-colouring.  

\medskip The \textit{chromatic number} of $G$, denoted by $\chi(G)$, is defined as the smallest $k$ such that $G$ admits a $k$-colouring. Note that if we remove or add isolated vertices from $G$, its chromatic number does not change. Furthermore, if $G\to H$, then $\chi(G)\leq\chi(H)$ \cite[Corollary 1.8, p. 7]{hell2004}. 

\medskip  We have the following statement, which is fundamental in Theorem~\ref{thm:lower-bound}.

\begin{proposition}\label{prop:chromatic-union}(cf. \cite[Exercise 11, p. 36]{hell2004}) 
  Let $G$ be a simple graph, and  let $G_1,\ldots,G_m$ be subgraphs of $G$ such that $G=G_1\cup\cdots\cup G_m$. Then, we have \[\chi(G)\leq \prod_{j=1}^m\chi(G_j).\]  
\end{proposition}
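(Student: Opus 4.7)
The plan is to construct an explicit proper coloring of $G$ using at most $\prod_{j=1}^m \chi(G_j)$ colors, built as a product of optimal colorings of the subgraphs $G_j$. The coloring will take values in the Cartesian product of the color sets, and the key observation that makes it work is that every edge of $G$ lies in at least one $G_j$ by the union hypothesis.

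First, for each $j \in \{1, \ldots, m\}$, I would fix an optimal proper $\chi(G_j)$-coloring $c_j \colon V(G_j) \to \{1, 2, \ldots, \chi(G_j)\}$. Since a vertex of $G$ need not belong to every $V(G_j)$, I would extend each $c_j$ to a map $\widetilde{c}_j \colon V(G) \to \{1, 2, \ldots, \chi(G_j)\}$ arbitrarily, for instance by setting $\widetilde{c}_j(v) = 1$ whenever $v \notin V(G_j)$. This extension costs nothing in terms of the color count, and it is the step that sidesteps what would otherwise be the main obstacle, namely that vertices need not appear in every factor.

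Next, I would define the product coloring
\[
c \colon V(G) \longrightarrow \prod_{j=1}^{m}\{1,2,\ldots,\chi(G_j)\}, \qquad c(v) = \bigl(\widetilde{c}_1(v),\, \widetilde{c}_2(v),\, \ldots,\, \widetilde{c}_m(v)\bigr).
\]
To verify that $c$ is a proper coloring, take any edge $uv \in E(G)$. Since $E(G) = E(G_1)\cup\cdots\cup E(G_m)$, there exists some index $j_0$ with $uv \in E(G_{j_0})$, so in particular $u,v \in V(G_{j_0})$. Because $c_{j_0}$ is a proper coloring of $G_{j_0}$, we have $\widetilde{c}_{j_0}(u) = c_{j_0}(u) \neq c_{j_0}(v) = \widetilde{c}_{j_0}(v)$, and hence the tuples $c(u)$ and $c(v)$ differ in at least the $j_0$-th coordinate.

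Thus $c$ is a proper coloring of $G$ using at most $\prod_{j=1}^{m}\chi(G_j)$ colors, which after relabeling the image yields $\chi(G) \leq \prod_{j=1}^{m}\chi(G_j)$. The proof is essentially a one-step construction; the only subtle point is the need to extend partial colorings to all of $V(G)$, which is harmless since the extension does not enlarge the color palette.
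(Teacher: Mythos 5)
Your proof is correct and follows essentially the same product-coloring argument as the paper: both construct a proper coloring of $G$ with values in $\prod_{j=1}^{m}\{1,\ldots,\chi(G_j)\}$ and use the fact that every edge lies in some $G_{j_0}$. The only cosmetic difference is that you handle vertices missing from some $V(G_j)$ by extending the colorings arbitrarily, whereas the paper first replaces each $G_j$ by a spanning subgraph via adding isolated vertices; the two devices are interchangeable.
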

\begin{proof}
Since the chromatic number does not change when we add isolated vertices, we can assume that $V(G_i)=V(G)$ for each $i$ (i.e., each $G_i$ is a spanning subraph of $G$). Suppose that $\ell_i=\chi(G_i)$, and consider a homomorphism $f_i:G_i\to K_{\ell_i}$ for each $i$. Define the map $f:V(G)\to \{1,\ldots,\ell_1\}\times\cdots\times \{1,\ldots,\ell_m\}$ by \[f(v)=\left(f_1(v),\ldots,f_m(v)\right) \quad \text{ for all $v\in V(G)$.}\]  Since $f(u)\neq f(v)$ (indeed, $f_i(u)\neq f_i(v)$ for each $i$) whenever $uv\in E(G)$, $f$ is a homomorphism from $G$ to $K_{\ell_1\cdots\ell_m}$, where we consider $V\left(K_{\ell_1\cdots\ell_m}\right)=\{1,\ldots,\ell_1\}\times\cdots\times \{1,\ldots,\ell_m\}$. Hence, $\chi(G)\leq \ell_1\cdots\ell_m=\prod_{j=1}^m\chi(G_j)$.    
\end{proof}

The following notion will be fundamental in Proposition~\ref{prop:upper-bound}.

\begin{definition}[$\ell$-partite graph and $\ell$-clique]\label{defn:l-partite}
Let $\ell\geq 2$ be an integer and $G$ be a simple graph.
\begin{enumerate}
    \item[(1)]  $G$ is called \textit{$\ell$-partite} if its vertex set can be partitioned into $\ell$ disjoint nonempty subsets $V_1,\ldots,V_\ell$ such that each edge connects a vertex of $V_i$ with a vertex of $V_j$ for $i\neq j$. A $2$-partite graph is also called a \textit{bipartite graph}. Note that $G$ is $\ell$-partite if and only if there exists a vertex-surjective homomorphism $G\to K_\ell$.
    \item[(2)] A \textit{$\ell$-clique} or \textit{complete $\ell$-partite graph} $K_{m_1,\ldots,m_\ell}$ has $\mid V_1\mid=m_1,\ldots,\mid V_\ell\mid=m_\ell$, and the edge set is defined as $E=\{\{u,v\}:~u\in V_i, v\in V_j \text{ for any $i\neq j$}\}$. A $2$-clique is also called a \textit{biclique}. Note that $K_\ell=K_{1,\ldots,1}$, where $m_1=\cdots=m_\ell=1$.  
\end{enumerate} 
\end{definition}


\section{(Injective) hom-complexity}\label{sec:one}
In this section, we introduce the notion of (injective) hom-complexity and its properties. Several examples are provided to support this theory. 

\subsection{Definitions and Examples} 
Given two graphs $G$ and $H$, in general, a homomorphism $f:G\to H$ may not exist. A significant challenge in graph theory is identifying such homomorphisms. Therefore, we present the main definition of this work.

\begin{definition}[(Injective) hom-complexity]\label{defn:complexity}
Let $G$ and $H$ be graphs.
    \begin{enumerate}
        \item[(1)]  The \textit{hom-complexity from $G$ to $H$}, denoted by $\text{C}(G;H)$, is the least positive integer $k$ such that there exist subgraphs $G_1,\ldots,G_k$ of $G$ satisfying  $G=G_1\cup\cdots\cup G_k$, with the property that for each $G_i$, there exists a homomorphism $f_i:G_i\to H$. We set $\text{C}(G;H)=\infty$ if no such integer $k$ exists. This invariant, in view of Proposition~\ref{prop:hom-cov-number}, can also be called the \textit{$H$-covering number} of $G$. 
        \item[(2)]  The \textit{injective hom-complexity from $G$ to $H$}, denoted by $\text{IC}(G;H)$, is the least positive integer $k$ such that there exist subgraphs $G_1,\ldots,G_k$ of $G$ satisfying $G=G_1\cup\cdots\cup G_k$, and for each $G_i$, there exists an injective homomorphism $f_i:G_i\to H$. We set $\text{IC}(G;H)=\infty$ if no such integer $k$ exists.
    \end{enumerate} 
\end{definition}

A collection $\mathcal{M}=\{f_i:G_i\to H\}_{i=1}^\ell$, where $G_1,\ldots,G_\ell$ are subgraphs of $G$ such that $G=G_1\cup\cdots\cup G_\ell$ and  each $f_i:G_i\to H$ is a homomorphism, is called a \textit{quasi-homomorphism} from $G$ to $H$. A quasi-homomorphism $\mathcal{M}=\{f_i:G_i\to H\}_{i=1}^\ell$ is termed \textit{optimal} if $\ell=\text{C}(G;H)$. Observe that a unitary quasi-homomorphism $\{f:G\to H\}$ is optimal and constitutes a homomorphism from $G$ to $H$. Additionally, any quasi-homomorphism $\mathcal{M}=\{f_i:G_i\to H\}_{i=1}^\ell$ induces a map $f:V(G)\to V(H)$ defined by  $f(v)=f_i(v)$, where $i$ is the least index such that $v\in V(G_i)$. Likewise, a collection $\mathcal{M}=\{f_i:G_i\to H\}_{i=1}^\ell$, where  $G_1,\ldots,G_\ell$ are subgraphs of $G$ such that $G=G_1\cup\cdots\cup G_\ell$ and each $f_i:G_i\to H$ is an injective homomorphism, is called an \textit{injective quasi-homomorphism} from $G$ to $H$. An injective quasi-homomorphism $\mathcal{M}=\{f_i:G_i\to H\}_{i=1}^\ell$ is termed \textit{optimal} if $\ell=\text{IC}(G;H)$.

\medskip Note that if we remove or add isolated vertices from $G$, its hom-complexity $\text{C}(G;H)$ does not change. This statement does not hold for the injective hom-complexity. For example, $\mathrm{IC}(K_2\sqcup \{\ast\};K_2)=2$, whereas $\mathrm{IC}(K_2;K_2)=1$.

\medskip By Definition~\ref{defn:complexity}, we can also make the following remark. 

\begin{remark}\label{rem:def-ob}
\noindent \begin{itemize}
    \item[(1)]  $\text{C}(G;H)\leq \text{IC}(G;H)$ for any graphs $G$ and $H$, since any injective quasi-homomorphism is a quasi-homomorphism. 
    \item[(2)]  $\text{C}(G;H)=1$ if and only if there exists a homomorphism $G\to H$ (i.e., $G$ is $H$-colourable). Additionally, $\text{IC}(G;H)=1$  if and only if there exists an injective homomorphism $G\to H$, which is equivalent to saying that $H$ admits a copy of $G$ as a subgraph.
    \item[(3)] The hom-complexity $\text{C}(G;H)$ coincides with the least positive integer $k$ such that there exist subgraphs $G_1,\ldots,G_k$ of $G$ satisfying  $G=G_1\cup\cdots\cup G_k$, and each $G_i$ is $H$-colourable. 
    \item[(4)] Since $H$-colorability does not depend on isolated vertices, we have that $\text{C}(G;H)$ coincides with the least positive integer $k$ such that there exist spanning subgraphs $G_1,\ldots,G_k$ of $G$ satisfying $G=G_1\cup\cdots\cup G_k$, and each $G_i$ is $H$-colourable. 
    \item[(5)] By Item (4), $\text{C}(G;H)$ coincides with the least positive integer $k$ such that there exists a homomorphism \[G\to H^{(k)},\] where $H^{(k)}$ is the graph whose vertex set is the Cartesian product $V(H)^k=V(H)\times\cdots\times V(H)$ (taken $k$ times), and where two $k$-tuples $x=(x_1,\ldots,x_k)$ and $y=(y_1,\ldots,y_k)$ are adjacent if and only if $x_i$ and $y_i$ are adjacent in $H$ for at least one index $i\in\{1,\ldots,k\}$\footnote{Note that $H^{(k)}$ and $H^{\boxtimes k}$ are not the same graph in general. Here, $H^{\boxtimes k}$ denotes the \textit{strong graph product}, whose vertex set is also $V(H)^k$, and where two $k$-tuples $x=(x_1,\ldots,x_k)$ and $y=(y_1,\ldots,y_k)$ are adjacent if and only if, for each $i\in\{1,\ldots,k\}$, either $x_i=y_i$ or $x_i$ and $y_i$ are adjacent in $H$; see \cite[p. 36]{hammack2011}). Clearly, $H^{\boxtimes k}$ is a subgraph of $H^{(k)}$, and they coincide  when $H=K_n$.}.  
\end{itemize}
\end{remark}

We do not use the following result, but we present it as a general statement. By \cite[Proposition 1.10, p. 8]{hell2004} and Remark~\ref{rem:def-ob}(2), we can infer the following statement.

\begin{lemma}\label{prop:complexity-one}
    Let $G$ and $H$ be graphs. We have the following:
    \begin{enumerate}
        \item[(1)]  $\mathrm{C}(G;H)=1$ if and only if the vertices of $G$ can be partitioned into sets $S_x$, $x\in V(H)$, such that the following property is satisfied: If $xy\notin E(H)$, then $uv\notin E(G)$ whenever $u\in S_x$ and $v\in S_y$.
        \item[(2)]  $\mathrm{IC}(G;H)=1$ if and only if the vertices of $G$ can be partitioned into unitary sets $S_x$, $x\in V(H)$, such that the following property is satisfied: If $xy\notin E(H)$, then $uv\notin E(G)$ whenever $u\in S_x$ and $v\in S_y$.
    \end{enumerate}
\end{lemma}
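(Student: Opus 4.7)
The plan is to reduce both statements to the existence of a (injective) homomorphism via Remark~\ref{rem:def-ob}(2), and then exhibit a natural correspondence between such homomorphisms and partitions of $V(G)$ satisfying the stated non-edge condition. The bridge is the assignment $f \mapsto \{f^{-1}(x)\}_{x\in V(H)}$, which I will show is a bijection between homomorphisms $G\to H$ and partitions satisfying the property in~(1).

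For part~(1), first I would handle the forward direction: assuming $\mathrm{C}(G;H)=1$, Remark~\ref{rem:def-ob}(2) furnishes a homomorphism $f:G\to H$; then define $S_x:=f^{-1}(x)$ for each $x\in V(H)$, which partitions $V(G)$ (allowing empty parts). If $xy\notin E(H)$ and $u\in S_x$, $v\in S_y$, then $f(u)f(v)=xy\notin E(H)$, and since $f$ is a homomorphism this forces $uv\notin E(G)$. For the backward direction, given a partition $\{S_x\}_{x\in V(H)}$ with the stated property, define $f:V(G)\to V(H)$ by $f(v)=x$ whenever $v\in S_x$ (well-defined because the $S_x$ partition $V(G)$). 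To see $f$ is a homomorphism, take any $uv\in E(G)$ with $u\in S_x$, $v\in S_y$; if $xy\notin E(H)$ the hypothesis gives $uv\notin E(G)$, a contradiction, so $f(u)f(v)=xy\in E(H)$. Hence $\mathrm{C}(G;H)=1$ by Remark~\ref{rem:def-ob}(2).

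For part~(2), I would argue by the same construction and observe that, under the correspondence $f\leftrightarrow\{f^{-1}(x)\}_{x\in V(H)}$, the homomorphism $f$ is (vertex-)injective if and only if each $S_x=f^{-1}(x)$ has at most one element, i.e., the partition is into unitary parts (empty or singleton). Combining this with part~(1) and Remark~\ref{rem:def-ob}(2) (which characterizes $\mathrm{IC}(G;H)=1$ as the existence of an injective homomorphism $G\to H$) yields the equivalence. Since vertex-injectivity of a homomorphism between simple graphs already entails edge-injectivity (as noted in Section~\ref{sec:pre}), no further verification is needed.

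There is no real obstacle here; the statement is a direct translation of \cite[Proposition~1.10]{hell2004} through the lens of Remark~\ref{rem:def-ob}(2). The only subtlety worth flagging explicitly in the write-up is the convention that the partition $\{S_x\}_{x\in V(H)}$ is indexed by $V(H)$ and may contain empty parts, so that ``unitary'' in~(2) should be understood as ``of cardinality at most one''; this is precisely what corresponds to injectivity of the associated map $f:V(G)\to V(H)$.
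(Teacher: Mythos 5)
Your proposal is correct and follows essentially the same route as the paper: the paper simply derives the lemma from Remark~\ref{rem:def-ob}(2) together with the cited characterization of homomorphisms by vertex partitions in \cite[Proposition 1.10, p. 8]{hell2004}, and your argument just writes out that correspondence $f\leftrightarrow\{f^{-1}(x)\}_{x\in V(H)}$ explicitly. Your remark that the parts may be empty (so ``unitary'' means cardinality at most one) is a reasonable clarification of the convention, not a deviation.
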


The following example demonstrates that the hom-complexity can be strictly less than the injective hom-complexity.

\begin{example}\label{exam:complexity-less-injective}
    Let $G$ and $H$ be directed graphs defined as follows: $V(G)=\{a,b,c\}$, $E(G)=\{(a,b),(c,b)\}$, $V(H)=\{a',b',c'\}$ and $E(H)=\{(c',a'),(c',b')\}$. 
    $$
\begin{tikzpicture}
\Vertex[x=0,y=-1,size=0.2,label=$a$,position=above,color=black]{A} 
\Vertex[x=1, size=0.2,label=$b$,position=above,color=black]{B}
\Vertex[x=0, y=1, size=0.2,label=$c$,position=above,color=black]{C} 
\Vertex[x=5, y=-1, size=0.2,label=$a'$,position=above,color=black]{D}
\Vertex[x=6, size=0.2,label=$c'$,position=above,color=black]{E} 
\Vertex[x=5,y=1,size=0.2,label=$b'$,position=above,color=black]{F} 
\Vertex[x=0,y=-1,size=0.2,distance=0.5cm,label=$G$,position=below,color=black]{M} 
\Vertex[x=5, y=-1, size=0.2,distance=0.5cm,label=$H$,position=below,color=black]{N}
\Edge[Direct,color=black](A)(B) 
\Edge[Direct,color=black](C)(B)
\Edge[Direct,color=black](E)(D)
\Edge[Direct,color=black](E)(F)
  \end{tikzpicture}
 $$ Note that the map $f:V(G)\to V(H)$ defined by $f(a)=f(c)=c'$ and  $f(b)=b'$ is a homomorphism from $G$ to $H$. Hence, we have $\mathrm{C}(G;H)=1$.
 
On the other hand, observe that $\mathrm{indeg}(b)=2$. We will show  that $\mathrm{IC}(G;H)= 2$. Suppose there exists an injective homomorphism $f:G\to H$. Then, we have $\mathrm{indeg}(f(b))\geq 2$ (see Lemma~\ref{lem:hom-degree}). This leads to a contradiction, because in $H$, we have: \begin{align*}
   \mathrm{indeg}(a')&= 1,\\
   \mathrm{indeg}(b')&= 1,\\
   \mathrm{indeg}(c')&= 0.\\
 \end{align*} So, we conclude that $\mathrm{IC}(G;H)\geq 2$. Additionally, consider the subgraphs $G_1$ and $G_2$ of $G$, defined as follows:\begin{align*}
     V(G_1)&=\{a,b\},\\
     E(G_1)&=\{(a,b)\},\\
     V(G_2)&=\{b,c\},\\
     E(G_2)&=\{(c,b)\}.\\
 \end{align*} Together with the injective homomorphisms $f_1:G_1\to H$ and $f_2:G_2\to H$, defined by: \begin{align*}
     f_1(a)&=c',\\
     f_1(b)&=a',\\
     f_2(b)&=b',\\
     f_2(c)&=c'.\\
 \end{align*} Note that $G=G_1\cup G_2$. Thus, we have $\mathrm{IC}(G,H)\leq 2$. Therefore, we conclude that $\mathrm{IC}(G,H)=2$. 
\end{example}

Next example demonstrates that the hom-complexity can be equal to the injective hom-complexity.

\begin{example}\label{exam:complexity-equal-inyective}
    Consider the directed graphs $G$ and $H$ defined as follows: $V(G)=\{a,b,c\}$, $E(G)=\{(a,b),(b,c)\}$, and $H$ as in Example~\ref{exam:complexity-less-injective}. 
    $$
\begin{tikzpicture}
\Vertex[x=0,y=-1,size=0.2,label=$a$,position=above,color=black]{A} 
\Vertex[x=1, size=0.2,label=$b$,position=above,color=black]{B}
\Vertex[x=0, y=1, size=0.2,label=$c$,position=above,color=black]{C} 
\Vertex[x=5, y=-1, size=0.2,label=$a'$,position=above,color=black]{D}
\Vertex[x=6, size=0.2,label=$c'$,position=above,color=black]{E} 
\Vertex[x=5,y=1,size=0.2,label=$b'$,position=above,color=black]{F} 
\Vertex[x=0,y=-1,size=0.2,distance=0.5cm,label=$G$,position=below,color=black]{M} 
\Vertex[x=5, y=-1, size=0.2,distance=0.5cm,label=$H$,position=below,color=black]{N}
\Edge[Direct,color=black](A)(B) 
\Edge[Direct,color=black](B)(C)
\Edge[Direct,color=black](E)(D)
\Edge[Direct,color=black](E)(F)
  \end{tikzpicture}
 $$ Observe that $\mathrm{indeg}(b)=1$ and $\mathrm{outdeg}(b)=1$. We will show that $\mathrm{IC}(G;H)=\mathrm{C}(G;H)= 2$. Suppose there exists  a homomorphism $f:G\to H$. Note that $f$ must be injective. Then, we have $\mathrm{indeg}(f(b))\geq 1$ and $\mathrm{outdeg}(f(b))\geq 1$ (see Lemma~\ref{lem:hom-degree}). This leads to a contradiction because, in $H$, we have: \begin{align*}
   \mathrm{indeg}(a')&= 1,\\
   \mathrm{outdeg}(a')&= 0,\\
   \mathrm{indeg}(b')&= 1,\\
   \mathrm{outdeg}(b')&= 0,\\
   \mathrm{indeg}(c')&= 0,\\
   \mathrm{outdeg}(c')&= 2.\\
 \end{align*} Thus, we conclude that $\mathrm{C}(G;H)\geq 2$. Next, consider the subgraphs $G_1$ and $G_2$ of $G$, defined as follows:\begin{align*}
     V(G_1)&=\{a,b\},\\
     E(G_1)&=\{(a,b)\},\\
     V(G_2)&=\{b,c\},\\
     E(G_2)&=\{(b,c)\}.\\
 \end{align*} Together with the injective homomorphisms $f_1:G_1\to H$ and $f_2:G_2\to H$, defined by: \begin{align*}
     f_1(a)&=c',\\
     f_1(b)&=a',\\
     f_2(b)&=c',\\
     f_2(c)&=b'.\\
 \end{align*} Note that $G=G_1\cup G_2$. Thus, we have $\mathrm{IC}(G;H)\leq 2$. Therefore, we conclude that $\mathrm{IC}(G;H)=\mathrm{C}(G;H)=2$. 
\end{example}

Now, we will present an example involving undirected graphs.

\begin{example}\label{exam:complexity-equal-inyective-no-dirigido}
    Let $G=K_3$ and $H$ be an undirected graph defined as follows:  \begin{align*}
       V(H)&=\{a',b',c'\},\\
       E(H)&=\{\{c',a'\},\{c',b'\}\}.\\
    \end{align*}
    $$
\begin{tikzpicture}
\Vertex[x=-1,y=0,size=0.2,label=$1$,position=above,color=black]{A} 
\Vertex[x=1, size=0.2,label=$2$,position=above,color=black]{B}
\Vertex[x=0, y=1, size=0.2,label=$3$,position=above,color=black]{C} 
\Vertex[x=5, y=-1, size=0.2,label=$a'$,position=above,color=black]{D}
\Vertex[x=6, size=0.2,label=$c'$,position=above,color=black]{E} 
\Vertex[x=5,y=1,size=0.2,label=$b'$,position=above,color=black]{F} 
\Vertex[x=0,y=1,size=0.2,distance=1.5cm,label=$K_3$,position=below,color=black]{M} 
\Vertex[x=5, y=-1, size=0.2,distance=0.5cm,label=$H$,position=below,color=black]{N}
\Edge[color=black](A)(B) 
\Edge[color=black](C)(B)
\Edge[color=black](C)(A)
\Edge[color=black](E)(D)
\Edge[color=black](E)(F)
  \end{tikzpicture}
 $$ We will show that $\mathrm{IC}(K_3;H)=\mathrm{C}(K_3;H)=2$. First, if there exists a homomorphism $K_3\to H$, then we must have $\chi(K_3)\leq \chi(H)$. This leads to a contradiction, since $\chi(K_3)=3$ and $\chi(H)=2$. Therefore, we conclude that $\mathrm{C}(K_3;H)\geq 2$. 
 
 On the the hand, consider the subgraphs $G_1$ and $G_2$ of $K_3$, defined as follows:\begin{align*}
     V(G_1)&=\{1,b,3\},\\
     E(G_1)&=\{\{1,3\},\{2,3\}\},\\
     V(G_2)&=\{1,2\},\\
     E(G_2)&=\{\{1,2\}\}.\\
 \end{align*} Now, define the injective homomorphisms $f_1:G_1\to H$ and $f_2:G_2\to H$ as follows: \begin{align*}
     f_1(1)&=a',\\
     f_1(2)&=b',\\
     f_1(3)&=c',\\
     f_2(1)&=a',\\
     f_2(2)&=c'.\\
 \end{align*} Note that $G=G_1\cup G_2$. Thus, we have $\mathrm{IC}(K_3;H)\leq 2$. Therefore, we conclude that $\mathrm{IC}(K_3;H)=\mathrm{C}(K_3;H)=2$. 
\end{example}

\subsection{Triangular Inequality} Given a homomorphism $f:G\to H$ and a subgraph $K$ of $H$, the \textit{image inverse} of $K$ through $f$ is the graph $f^{-1}(K)$, defined as follows:
 \begin{itemize}
     \item The vertex set is given by $V(f^{-1}(K))=f^{-1}(V(K))$.
     \item Two vertices $u$ and $v$ in $f^{-1}(K)$ are adjacent if and only if they are adjacent in $G$ and $f(u)$ and $f(v)$ are adjacent in $K$. 
 \end{itemize} Note that the restriction map $f_|:f^{-1}(V(K))\to V(K)$ is a vertex-surjective homomorphism from the graph $f^{-1}(K)$ to $K$, called the \textit{restriction homomorphism}, and is denoted by $f_|:f^{-1}(K)\to K$. 

 \medskip Given three graphs $G,H$, and $K$, there is a relation between the hom-complexities $\mathrm{C}(G;H), \mathrm{C}(H;K)$, and $\mathrm{C}(G;K)$. Likewise, the same holds for the injective hom-complexity. 

\begin{theorem}[Triangular Inequality]\label{thm:inequality-three-graphs}
  Let $G,H$, and $K$ be graphs. Then, \[\mathrm{C}(G;K)\leq \mathrm{C}(G;H)\cdot \mathrm{C}(H;K) \quad \text{ and } \quad \mathrm{IC}(G;K)\leq \mathrm{IC}(G;H)\cdot \mathrm{IC}(H;K).\]  
\end{theorem}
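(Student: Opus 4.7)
The plan is to use the \emph{image inverse} construction introduced just before the statement, together with composition of homomorphisms, to build a quasi-homomorphism from $G$ to $K$ of size $\mathrm{C}(G;H)\cdot\mathrm{C}(H;K)$.

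Set $m=\mathrm{C}(G;H)$ and $n=\mathrm{C}(H;K)$. By definition, fix a quasi-homomorphism $\{f_i:G_i\to H\}_{i=1}^{m}$ from $G$ to $H$ and a quasi-homomorphism $\{g_j:H_j\to K\}_{j=1}^{n}$ from $H$ to $K$. For each pair $(i,j)\in\{1,\dots,m\}\times\{1,\dots,n\}$, I would form the preimage graph $G_{ij}:=f_i^{-1}(H_j)$, which is a subgraph of $G_i$ (and hence of $G$). The restriction homomorphism $(f_i)_{|}:G_{ij}\to H_j$ discussed in the excerpt then composes with $g_j:H_j\to K$ to produce a homomorphism
\[
h_{ij}:=g_j\circ (f_i)_{|}:G_{ij}\longrightarrow K.
\]

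Next I would verify that $\{h_{ij}:G_{ij}\to K\}_{(i,j)}$ is a genuine quasi-homomorphism from $G$ to $K$, i.e.\ that $G=\bigcup_{i,j}G_{ij}$. For vertices: any $v\in V(G)$ lies in some $V(G_i)$, so $f_i(v)\in V(H)=\bigcup_j V(H_j)$, giving some $j$ with $v\in f_i^{-1}(V(H_j))=V(G_{ij})$. For edges: any $uv\in E(G)$ lies in some $E(G_i)$, so $f_i(u)f_i(v)\in E(H)=\bigcup_j E(H_j)$, giving some $j$ for which $u,v\in V(G_{ij})$ and $f_i(u)f_i(v)\in E(H_j)$; by the defining condition of the image inverse, this is exactly $uv\in E(G_{ij})$. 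Since we have exhibited a cover of $G$ by at most $mn$ subgraphs, each admitting a homomorphism to $K$, we conclude $\mathrm{C}(G;K)\le m\cdot n$.

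For the injective inequality, I would run the same construction starting from optimal \emph{injective} quasi-homomorphisms and observe that injectivity is preserved. Indeed, restricting an injective homomorphism $f_i$ to the subgraph $G_{ij}$ yields an injective homomorphism $(f_i)_{|}:G_{ij}\to H_j$ (both vertex- and edge-injectivity are inherited by restriction), and the composition $g_j\circ(f_i)_{|}$ of two injective homomorphisms is again injective on vertices and on edges. The same union check as above shows $G=\bigcup_{i,j}G_{ij}$, yielding $\mathrm{IC}(G;K)\le \mathrm{IC}(G;H)\cdot\mathrm{IC}(H;K)$.

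The argument is essentially formal once the preimage construction is in hand; the only place requiring a small amount of care is the edge-covering step, where one must use the \emph{definition} of $f_i^{-1}(H_j)$ (an edge $uv$ sits in $f_i^{-1}(H_j)$ iff $uv\in E(G_i)$ \emph{and} $f_i(u)f_i(v)\in E(H_j)$) rather than merely asserting that the endpoints lie in $V(G_{ij})$. I do not anticipate further obstacles, and the bounds $\mathrm{C}(G;H)=\infty$ or $\mathrm{C}(H;K)=\infty$ (and analogously for $\mathrm{IC}$) can be handled by the convention that the right-hand side is then $\infty$, making the inequality trivial in those cases.
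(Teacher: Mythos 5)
Your proposal is correct and follows essentially the same route as the paper's proof: form the preimages $G_{i,j}=f_i^{-1}(H_j)$ and compose the restriction homomorphisms with the $g_j$, noting that injectivity is preserved under restriction and composition for the second inequality. Your write-up is in fact slightly more careful than the paper's, since you explicitly verify the edge-covering condition $G=\bigcup_{i,j}G_{i,j}$ and address the infinite case.
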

\begin{proof}
  Let $m=\mathrm{C}(G;H)$ and $n=\mathrm{C}(H;K)$. Let $\mathcal{M}_1=\{g_{i}:G_{i}\to H\}_{i=1}^{m}$ be an optimal quasi-homomorphism from $G$ to $H$, and $\mathcal{M}_2=\{h_{j}:H_{j}\to K\}_{j=1}^{n}$ be an optimal quasi-homomorphism from $H$ to $K$. Define  $G_{i,j}:=g_{i}^{-1}(H_j)$ for each $i\in\{1,\ldots,m\}$ and each $j\in\{1,\ldots,n\}$ (noting that some $G_{i,j}$ may be empty). We have $G=\bigcup_{i,j=1}^{m,n} G_{i,j}$. If $G_{i,j}\neq\varnothing$, it is a subgraph of $G_i$ (and consequently a subgraph of $G$). We  also consider the restriction homomorphism $(g_i)_|:G_{i,j}\to H_j$. This leads to the composition \[G_{i,j}\stackrel{(g_i)_|}{\to} H_j\stackrel{h_j}{\to} K.\] Therefore, we obtain $\mathrm{C}(G;K)\leq m\cdot n=\mathrm{C}(G;H)\cdot \mathrm{C}(H;K)$.  

  Likewise, we obtain the inequality $\mathrm{IC}(G;K)\leq \mathrm{IC}(G;H)\cdot \mathrm{IC}(H;K)$ because if $g_{i}$ and $h_{j}$ are injective, then the composition $G_{i,j}\stackrel{(g_i)_|}{\to} H_j\stackrel{h_j}{\to} K$ is also injective. 
\end{proof}

The inequality in Theorem~\ref{thm:inequality-three-graphs} is sharp. For instance, consider $K=H$; then $\mathrm{C}(G;H)= \mathrm{C}(G;H)\cdot \mathrm{C}(H;H)$. In addition, see Example~\ref{exam:chi-chi-2} below.


\subsection{Graph invariant} The next result establishes a close connection between the hom-complexities and homomorphisms. 

\begin{theorem}\label{prop:complexity-subgraphs}
    Let $G'\to G$ and $H'\to H$ be homomorphisms.  
    \begin{itemize}
        \item[(1)] We have: \[\mathrm{C}(G';H)\leq \mathrm{C}(G;H)\leq \mathrm{C}(G;H').\] 
        \item[(2)] Moreover, if $G'\to G$ and $H'\to H$ are injective, then \[\mathrm{IC}(G';H)\leq \mathrm{IC}(G;H)\leq \mathrm{IC}(G;H').\] 
    \end{itemize}
\end{theorem}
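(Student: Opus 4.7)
The plan is to establish both inequalities directly from the definition of (injective) hom-complexity, using composition of homomorphisms for the upper bound $\mathrm{C}(G;H)\leq \mathrm{C}(G;H')$ and the image-inverse construction (introduced just before Theorem~\ref{thm:inequality-three-graphs}) for the lower bound $\mathrm{C}(G';H)\leq \mathrm{C}(G;H)$. Throughout, let $\varphi\colon H'\to H$ and $\psi\colon G'\to G$ denote the given homomorphisms.

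For the second inequality in (1), I would set $k=\mathrm{C}(G;H')$ and take an optimal quasi-homomorphism $\{f_i\colon G_i\to H'\}_{i=1}^k$. Post-composing with $\varphi$ yields the family $\{\varphi\circ f_i\colon G_i\to H\}_{i=1}^k$, which is a quasi-homomorphism from $G$ to $H$ since the subgraphs $G_i$ already cover $G$. This immediately gives $\mathrm{C}(G;H)\leq k$. When $\varphi$ and each $f_i$ are injective (both vertex- and edge-injective), composition preserves injectivity, so the same argument yields $\mathrm{IC}(G;H)\leq \mathrm{IC}(G;H')$ for the statement in (2).

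For the first inequality in (1), set $k=\mathrm{C}(G;H)$ and fix an optimal quasi-homomorphism $\{f_i\colon G_i\to H\}_{i=1}^k$. For each $i$, define $G'_i:=\psi^{-1}(G_i)$ using the image-inverse construction, and consider the restriction homomorphism $\psi|\colon G'_i\to G_i$. The composition $f_i\circ \psi|\colon G'_i\to H$ is then a homomorphism. The key verification is that $G'=G'_1\cup\cdots\cup G'_k$: any $v\in V(G')$ satisfies $\psi(v)\in V(G)=\bigcup_i V(G_i)$, so $v\in V(G'_i)$ for some $i$; and any edge $uv\in E(G')$ maps to $\psi(u)\psi(v)\in E(G)=\bigcup_i E(G_i)$, which places $uv$ in $E(G'_i)$ by the definition of the image inverse. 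Hence $\{f_i\circ \psi|\colon G'_i\to H\}_{i=1}^k$ is a quasi-homomorphism and $\mathrm{C}(G';H)\leq k$.

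The main obstacle, which is mild, is the injective case of this second direction: I must check that when $\psi$ and each $f_i$ are injective, so is each $f_i\circ \psi|$. The restriction of an injective map to any subgraph remains injective on vertices and hence on edges, so $\psi|\colon G'_i\to G_i$ is an injective homomorphism; composing with the injective $f_i$ preserves injectivity, giving $\mathrm{IC}(G';H)\leq \mathrm{IC}(G;H)$ and completing~(2). No further ingredients are required beyond Definition~\ref{defn:complexity} and the elementary properties of the image-inverse restriction already recorded in the paper.
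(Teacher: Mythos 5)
Your proof is correct and rests on exactly the machinery the paper uses: the paper's own proof is a one-liner invoking the Triangular Inequality (Theorem~\ref{thm:inequality-three-graphs}) with $\mathrm{C}(G';G)=1$ and $\mathrm{C}(H';H)=1$, and your direct argument is precisely the proof of that theorem specialized to the case where one of the two quasi-homomorphisms is a single homomorphism. The image-inverse covering check $G'=G'_1\cup\cdots\cup G'_k$ and the injectivity of the restricted compositions, which you verify explicitly, are the same steps carried out (in greater generality) in the proof of the Triangular Inequality, so nothing is missing.
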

\begin{proof}
 It is a direct consequence of Theorem~\ref{thm:inequality-three-graphs}.
\end{proof}

From Theorem~\ref{prop:complexity-subgraphs}(1), we observe that if $G'\to G$ and $G\to G'$, then $\mathrm{C}(G';H)=\mathrm{C}(G;H)$ for any graph $H$. Similarly, if $H'\to H$ and $H\to H'$, then $\mathrm{C}(G;H')=\mathrm{C}(G;H)$ for any graph $G$. In particular, this shows  that (injective) hom-complexity is a graph invariant, meaning it is preserved under graph isomorphisms.

\begin{corollary}[Graph Invariant]\label{cor:invariant-iso-complexity}
    If $G'$ is isomorphic to $G$ and $H'$ is isomorphic to $H$, then \[\mathrm{C}(G;H)=\mathrm{C}(G';H') \quad \text{ and } \quad \mathrm{IC}(G;H)=\mathrm{IC}(G';H').\]
\end{corollary}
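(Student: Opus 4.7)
The plan is to derive this directly from Theorem~\ref{prop:complexity-subgraphs} by exploiting the fact that a graph isomorphism gives homomorphisms in both directions. Concretely, if $G'$ is isomorphic to $G$ via a bijective homomorphism $\varphi \colon G' \to G$, then its inverse $\varphi^{-1} \colon G \to G'$ is also a homomorphism (by the definition of graph isomorphism recalled in Section~\ref{sec:pre}); the analogous statement holds for $H'$ and $H$. Thus one obtains homomorphisms $G' \to G$, $G \to G'$, $H' \to H$, and $H \to H'$.

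For the hom-complexity part, I would then apply Theorem~\ref{prop:complexity-subgraphs}(1) twice. First, using $G' \to G$ and $H' \to H$ yields
\[
\mathrm{C}(G';H) \;\leq\; \mathrm{C}(G;H) \;\leq\; \mathrm{C}(G;H').
\]
Second, using the reverse homomorphisms $G \to G'$ and $H \to H'$ yields
\[
\mathrm{C}(G;H') \;\leq\; \mathrm{C}(G';H') \;\leq\; \mathrm{C}(G';H).
\]
Chaining the two inequalities forces every term in the string
\[
\mathrm{C}(G';H)\leq \mathrm{C}(G;H)\leq \mathrm{C}(G;H')\leq \mathrm{C}(G';H')\leq \mathrm{C}(G';H)
\]
to coincide, and in particular $\mathrm{C}(G;H) = \mathrm{C}(G';H')$.

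For the injective version, the only additional observation needed is that the isomorphisms $\varphi$ and $\varphi^{-1}$ are themselves injective homomorphisms (being bijective), as are the analogous maps between $H$ and $H'$. Hence Theorem~\ref{prop:complexity-subgraphs}(2) applies in the same way, producing the symmetric pair of inequalities that force $\mathrm{IC}(G;H) = \mathrm{IC}(G';H')$. I do not anticipate any genuine obstacle here: the entire proof is a bookkeeping application of the two inequalities in Theorem~\ref{prop:complexity-subgraphs}, and the only point worth stating explicitly is that an isomorphism supplies the required homomorphisms in both directions, which is immediate from the definitions.
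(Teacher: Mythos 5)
Your proposal is correct and follows essentially the same route as the paper, which derives the corollary from Theorem~\ref{prop:complexity-subgraphs} by using the isomorphism and its inverse to obtain homomorphisms (respectively injective homomorphisms) in both directions and then chaining the resulting inequalities. The only cosmetic difference is that you combine everything into one cyclic chain of inequalities, whereas the paper first concludes $\mathrm{C}(G';H)=\mathrm{C}(G;H)$ and $\mathrm{C}(G;H')=\mathrm{C}(G;H)$ separately; both are equivalent bookkeeping.
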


Furthermore, Theorem~\ref{prop:complexity-subgraphs} can be used to show the nonexistence of homomorphisms. 

\begin{proposition}\label{prop:no-existencia}
\noindent\begin{enumerate}
    \item[(1)]  Let $G$ and $G'$ be graphs. We have:
    \begin{enumerate}
        \item[(i)]  If $\mathrm{C}(G';H)>\mathrm{C}(G;H)$ for some graph $H$, then $G'\not\rightarrow G$.
         \item[(ii)]  If $\mathrm{IC}(G';H)>\mathrm{IC}(G;H)$ for some graph $H$, then there is no injective homomorphism from $G'$ to  $G$. 
    \end{enumerate}
     \item[(2)]  Let $H$ and $H'$ be graphs. We have:
    \begin{enumerate}
        \item[(i)]  If $\mathrm{C}(G;H)>\mathrm{C}(G;H')$ for some graph $G$, then $H'\not\rightarrow H$. 
         \item[(ii)] If $\mathrm{IC}(G;H)>\mathrm{IC}(G;H')$ for some graph $G$, then there is no injective homomorphism from $H'$ to  $H$. 
    \end{enumerate}
\end{enumerate}    
\end{proposition}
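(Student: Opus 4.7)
The statement to prove is Proposition~\ref{prop:no-existencia}, and the plan is to derive all four assertions as straightforward contrapositives of the two monotonicity statements in Theorem~\ref{prop:complexity-subgraphs}. The key observation is that Theorem~\ref{prop:complexity-subgraphs}(1) furnishes two independent monotonicity inequalities for $\mathrm{C}(-;-)$, one in each variable, and Theorem~\ref{prop:complexity-subgraphs}(2) furnishes the analogous pair for $\mathrm{IC}(-;-)$ under the additional assumption that the homomorphism in question is injective. Each item of the proposition is precisely the contrapositive of exactly one of these four implications, so no new idea is required.

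For item (1)(i), I would assume for contradiction that $G' \to G$. Then the left-hand inequality in Theorem~\ref{prop:complexity-subgraphs}(1), applied to the given graph $H$, yields $\mathrm{C}(G';H) \leq \mathrm{C}(G;H)$, contradicting the hypothesis $\mathrm{C}(G';H) > \mathrm{C}(G;H)$. For item (2)(i), I would assume for contradiction that $H' \to H$. Then the right-hand inequality in Theorem~\ref{prop:complexity-subgraphs}(1), applied to the given graph $G$, yields $\mathrm{C}(G;H) \leq \mathrm{C}(G;H')$, again contradicting the hypothesis. Items (1)(ii) and (2)(ii) follow by the same argument but using Theorem~\ref{prop:complexity-subgraphs}(2) in place of (1): assuming an injective homomorphism $G' \to G$ (respectively $H' \to H$) and applying the left-hand (respectively right-hand) inequality produces the desired contradiction.

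There is no substantive obstacle here; the entire content of the proof is the logical move of contraposition, since Theorem~\ref{prop:complexity-subgraphs} has already packaged the hard work. The only care needed is to keep track of the direction of each inequality and to match each item of the proposition to the correct inequality (variable in the first slot versus the second slot, plain versus injective version). The proof can accordingly be written in just a few lines, citing Theorem~\ref{prop:complexity-subgraphs} four times.
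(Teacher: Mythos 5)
Your proposal is correct and is essentially identical to the paper's proof, which likewise disposes of all four items in one line by taking the contrapositive of the corresponding inequalities in Theorem~\ref{prop:complexity-subgraphs}. Your matching of each item to the correct slot (first versus second variable, plain versus injective) is exactly what the paper intends.
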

\begin{proof}
   It is sufficient to use the contrapositive of each implication in Theorem~\ref{prop:complexity-subgraphs}.
\end{proof}

Hence, we have the following statement, which provides a lower bound for the chromatic number.

\begin{corollary}\label{cor:complejidad-chro}
    If $\mathrm{C}(G;H)>\mathrm{C}(K_n;H)$ or $\mathrm{C}(H;G)<\mathrm{C}(H;K_n)$ for some graph $H$, then $\chi(G)\geq n+1$. Alternatively, if $\chi(G)\leq n$, then $\mathrm{C}(G;K)\leq \mathrm{C}(K_n;K)$ and $\mathrm{C}(K;G)\geq \mathrm{C}(K;K_n)$ for any graph $K$.
\end{corollary}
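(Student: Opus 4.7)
The plan is to prove the ``alternatively'' direction first and then obtain the main statement by contrapositive, since the two halves are logically equivalent. The key elementary observation I would record is that $\chi(G)\leq n$ if and only if there exists a homomorphism $G\to K_n$: a $\chi(G)$-colouring gives a vertex-surjective homomorphism $G\to K_{\chi(G)}$, which when post-composed with the inclusion homomorphism $K_{\chi(G)}\hookrightarrow K_n$ yields the required map $G\to K_n$; conversely, the existence of $G\to K_n$ forces $\chi(G)\leq\chi(K_n)=n$ by the standard monotonicity of $\chi$ under homomorphisms.

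Assuming $\chi(G)\leq n$, so that $G\to K_n$, I would then invoke Theorem~\ref{prop:complexity-subgraphs}(1) in each of its two slots. Applying the first inequality of that theorem with $G'=G$ and the ``larger'' graph taken to be $K_n$, the homomorphism $G\to K_n$ yields
\[
\mathrm{C}(G;K)\leq \mathrm{C}(K_n;K)
\]
for any graph $K$. Applying the second inequality of Theorem~\ref{prop:complexity-subgraphs}(1) with $H'=G$ and $H=K_n$ (again using the same homomorphism $G\to K_n$) gives
\[
\mathrm{C}(K;K_n)\leq \mathrm{C}(K;G),
\]
that is, $\mathrm{C}(K;G)\geq \mathrm{C}(K;K_n)$, for any graph $K$. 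This establishes the ``alternatively'' clause.

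Finally, the first assertion follows by contrapositive: if there exists a graph $H$ with either $\mathrm{C}(G;H)>\mathrm{C}(K_n;H)$ or $\mathrm{C}(H;G)<\mathrm{C}(H;K_n)$, then the conclusion just proved rules out $\chi(G)\leq n$, forcing $\chi(G)\geq n+1$. Alternatively, one can cite Proposition~\ref{prop:no-existencia} directly, since under either strict inequality that proposition gives $G\not\to K_n$, and by the equivalence recalled above this is exactly $\chi(G)\geq n+1$. There is essentially no hard step here; the only point requiring care is to keep the direction of the inequalities in Theorem~\ref{prop:complexity-subgraphs}(1) straight when inserting $K_n$ on the appropriate side.
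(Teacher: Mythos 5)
Your proposal is correct and follows essentially the same route as the paper: the paper's proof is a one-line appeal to Proposition~\ref{prop:no-existencia}(1)(i) and (2)(i) (itself the contrapositive of Theorem~\ref{prop:complexity-subgraphs}(1)) to get $G\not\to K_n$, hence $\chi(G)\geq n+1$, which is exactly the second variant you mention. Your applications of Theorem~\ref{prop:complexity-subgraphs}(1) are instantiated with the correct slots and directions, so there is nothing to fix.
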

\begin{proof}
    By Proposition~\ref{prop:no-existencia}(1)(i) and (2)(i), we have  $G\not\rightarrow K_n$. Therefore, it follows that $\chi(G)\geq n+1$. 
\end{proof}

Corollary~\ref{cor:complejidad-chro} motivates us to compute the hom-complexities $\mathrm{C}(K_n;H)$ and $\mathrm{C}(H;K_n)$. These are given in Example~\ref{exem:complexity-kj-k2}.

\medskip Let $H$ be a subgraph of $G$. A \textit{retraction} of $G$ to $H$ is a homomorphism $r:G\to H$ such that $r(v)=v$ for all $v\in V(H)$. If there is a retraction of $G$ to $H$, we say that $G$ \textit{retracts} to $H$, and that $H$ is a \textit{retract} of $G$. A \textit{core} is a graph which does not retract to a proper subgraph. For example, $K_n$ is a core. Each graph $G$ has a unique (up to isomorphism) retract $G'$ that is a core. We say that $G'$ is \textit{the core} of $G$ (cf. \cite[p. 19]{hell2004}). The core of a bipartite graph with at least one edge is $K_2$. The core of a $k$-clique is $K_k$. The core of a graph with loops is a single loop. 

\medskip Moreover, Theorem~\ref{prop:complexity-subgraphs} implies that hom-complexity can be restricted to cores. 

\begin{proposition}\label{prop:hom-complexity-core}
    Let $G$ and $H$ be graphs, and let $G'$ and $H'$ be their respective cores. We have
    \begin{enumerate}
        \item[(1)] $\mathrm{C}(G;H)=\mathrm{C}(G';H)=\mathrm{C}(G';H')=\mathrm{C}(G;H')$.
        \item[(2)] $\mathrm{IC}(G';H)\leq\min\{\mathrm{IC}(G;H),\mathrm{IC}(G';H')\}\leq\max\{\mathrm{IC}(G;H),\mathrm{IC}(G';H')\}\leq\mathrm{IC}(G;H')$.
    \end{enumerate} 
\end{proposition}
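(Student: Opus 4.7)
The plan is to reduce everything to Theorem~\ref{prop:complexity-subgraphs} by exploiting the defining property of a core: since $G'$ is the core of $G$, we have both the inclusion $G' \hookrightarrow G$, which is an injective homomorphism, and a retraction $r_G: G \to G'$, which is a homomorphism but not injective in general. The same holds for $H$ and its core $H'$. Thus there are homomorphisms in both directions between $G$ and $G'$, and between $H$ and $H'$, although the backward direction (the retraction) is typically non-injective.

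For part (1), I would apply Theorem~\ref{prop:complexity-subgraphs}(1) in both directions for each variable. The inclusion $G' \hookrightarrow G$ yields $\mathrm{C}(G';H) \leq \mathrm{C}(G;H)$, and the retraction $r_G: G \to G'$ yields the reverse inequality, forcing $\mathrm{C}(G;H) = \mathrm{C}(G';H)$. The analogous sandwich argument with $H' \hookrightarrow H$ and $r_H: H \to H'$, applied once with the first slot equal to $G$ and once with it equal to $G'$, gives $\mathrm{C}(G;H) = \mathrm{C}(G;H')$ and $\mathrm{C}(G';H) = \mathrm{C}(G';H')$. Chaining these pairwise equalities establishes the four-term equality claimed in (1).

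For part (2), the key obstruction is precisely that the retractions $r_G$ and $r_H$ are generally not injective, so Theorem~\ref{prop:complexity-subgraphs}(2) applies only in one direction per variable. From the injective inclusion $G' \hookrightarrow G$ I would extract
\[ \mathrm{IC}(G';H) \leq \mathrm{IC}(G;H) \quad \text{and} \quad \mathrm{IC}(G';H') \leq \mathrm{IC}(G;H'), \]
and from $H' \hookrightarrow H$
\[ \mathrm{IC}(G;H) \leq \mathrm{IC}(G;H') \quad \text{and} \quad \mathrm{IC}(G';H) \leq \mathrm{IC}(G';H'). \]
The first and last of these four inequalities together show that $\mathrm{IC}(G';H)$ is bounded above by both $\mathrm{IC}(G;H)$ and $\mathrm{IC}(G';H')$, hence by their minimum; the middle two show that both $\mathrm{IC}(G;H)$ and $\mathrm{IC}(G';H')$ are bounded above by $\mathrm{IC}(G;H')$, hence so is their maximum. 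Combined with the trivial $\min \leq \max$, this yields the required four-term chain.

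There is no real technical obstacle here; the content is essentially bookkeeping on top of Theorem~\ref{prop:complexity-subgraphs}. The one point worth emphasizing is the conceptual asymmetry between (1) and (2): in (1), the bidirectional homomorphisms coming from the core definition allow us to sandwich and obtain equalities, whereas in (2) the non-injectivity of retractions blocks the reverse IC inequalities, forcing the weaker min--max statement rather than a clean equality.
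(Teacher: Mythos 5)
Your proposal is correct and follows exactly the paper's argument: the paper likewise derives both parts from Theorem~\ref{prop:complexity-subgraphs}, using the inclusions $G'\hookrightarrow G$, $H'\hookrightarrow H$ and the retractions $G\to G'$, $H\to H'$, with the non-injectivity of the retractions being precisely why part (2) yields only inequalities. Your bookkeeping of the four IC inequalities is accurate and matches what the paper leaves implicit.
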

\begin{proof}
    It follows from Theorem~\ref{prop:complexity-subgraphs}, because there are homomorphisms $G'\hookrightarrow G$, $H'\hookrightarrow H$ (the inclusions) and $G\to G'$, $H\to H'$ (retractions). 
\end{proof}

Proposition~\ref{prop:hom-complexity-core}(1) implies the following example.

\begin{example}\label{exam:complexity-h-n-clique}
Let $G$ be any graph. \begin{enumerate}
     \item[(1)] If $H$ is a bipartite graph with at least one edge, then  \[\mathrm{C}(G;H)=\mathrm{C}(G;K_2)\qquad \text{ and } \qquad \mathrm{C}(H;G)=\mathrm{C}(K_2;G).\]
     \item[(2)]  If $K_{m_1,\ldots,m_n}$ is a $n$-clique, then the equalities \[\mathrm{C}(G;K_{m_1,\ldots,m_n})=\mathrm{C}(G;K_n) \qquad \text{ and } \qquad \mathrm{C}(K_{m_1,\ldots,m_n};G)=\mathrm{C}(K_n;G)\] always hold. 
 \end{enumerate}  
\end{example}

Note that if $G$ is a graph with loops and $H$ is a graph without loops, then $\mathrm{C}(G;H)=\infty$. For instance, $\mathrm{C}(G;K_n)=\infty$ for any graph with loops $G$ and any $n\geq 1$.

\medskip Observe that $\mathrm{IC}(K_2;G)=1$ if and only if $E(G)\neq\varnothing$. Additionally, for the injective hom-complexity $\mathrm{IC}(G;K_2)$, we have: 

\begin{theorem}\label{thm:injective-category-compute}
    Let $G$ be a graph without isolated vertices and with $E(G)\neq\varnothing$. Then, we have \[\mathrm{IC}(G;K_2)=\begin{cases}
        \infty,&\hbox{ if $G$ has loops;}\\
        |E(G)|,&\hbox{ if $G$ does not have loops.}\\
    \end{cases}\] 
\end{theorem}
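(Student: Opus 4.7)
The plan is to split on whether $G$ has a loop. In the loop case I observe that $K_2$ is simple and contains no loop, so a homomorphism (injective or not) defined on any graph containing a loop $uu$ would have to send $uu$ to a loop of $K_2$, which does not exist. Since every covering $G = G_1 \cup \cdots \cup G_k$ satisfies $E(G) = E(G_1) \cup \cdots \cup E(G_k)$, some $G_j$ must carry the loop $uu$, and then no injective homomorphism $G_j \to K_2$ is possible. Hence no injective quasi-homomorphism exists and $\mathrm{IC}(G;K_2) = \infty$.

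In the loop-free case I would prove both inequalities separately. For the upper bound $\mathrm{IC}(G;K_2) \leq |E(G)|$, I decompose $G$ edge-by-edge: for each $e = uv \in E(G)$ let $G_e$ be the subgraph with $V(G_e) = \{u,v\}$ and $E(G_e) = \{e\}$; then $G_e \cong K_2$, so the natural identification $G_e \to K_2$ is an injective homomorphism. Because $G$ has no isolated vertices, every vertex belongs to some $G_e$, and hence $\bigcup_{e \in E(G)} G_e = G$, giving an injective quasi-homomorphism of size $|E(G)|$. For the lower bound, take any injective quasi-homomorphism $\{f_i : G_i \to K_2\}_{i=1}^k$. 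As the paper recalls, injective homomorphisms are exactly the vertex-injective ones, so $|V(G_i)| \leq |V(K_2)| = 2$ for each $i$; and since $G_i$ inherits from $G$ the absence of loops and multiple edges, a subgraph on at most two vertices carries at most one edge, i.e.\ $|E(G_i)| \leq 1$. Therefore $|E(G)| = \bigl|\bigcup_{i=1}^{k} E(G_i)\bigr| \leq k$, so $\mathrm{IC}(G;K_2) \geq |E(G)|$. Combining the two bounds yields the formula.

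The argument is essentially definitional, and the point I would be most careful about is not to weaken \emph{injective} to \emph{edge-injective} in the lower bound: it is precisely the vertex-side constraint $|V(G_i)| \leq |V(K_2)|$ that forces each covering piece to carry at most one edge, and this in turn produces the sharp count $|E(G)|$. The ``no isolated vertices'' hypothesis is used only once, to guarantee that the edge-by-edge decomposition covers all of $V(G)$ in the upper bound.
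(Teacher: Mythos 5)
Your proposal is correct and follows essentially the same route as the paper: the same edge-by-edge decomposition for the upper bound, and the same vertex-injectivity constraint ($|V(G_i)|\leq 2$, hence at most one edge per piece) for the lower bound. If anything, your lower bound is stated slightly more cleanly (you only need $|E(G_i)|\leq 1$ plus the covering condition, rather than the paper's claim that each piece is exactly an edge), and you spell out the loop case that the paper dismisses as straightforward.
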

\begin{proof}
The case where $G$ has loops is straightforward. Now, suppose that $G$ does not have loops. Observe that $\mathrm{IC}(G;K_2)\leq|E(G)|$, because for each edge $e=uv\in E(G)$, we can consider the subgraph $L_e=L$ given by $V(L)=\{u,v\}$ and $E(L)=\{uv\}$. Take the injective homomomorphism $f_e=f:L\to K_2$ defined by $f(u)=1$ and $f(v)=2$. Then, the collection$\{f_e:L_e\to K_2\}_{e\in E(G)}$ forms an injective quasi-homomorphism from $G$ to $K_2$. 

Now, if $\{f_\ell:G_\ell\to K_2\}_{\ell=1}^{m}$ is an optimal injective quasi-homomorphism from $G$ to $K_2$, then each $G_\ell$ satisfies $|V(G_\ell)|=2$ and $|E(G_\ell)|=1$, i.e., for each $G_\ell$ we have an edge of $G$ (this is due to the fact that $G$ has no isolated vertices). Thus, we have $m\geq |E(G)|$, leading to $\mathrm{IC}(G;K_2)\geq |E(G)|$. Therefore, we conclude that $\mathrm{IC}(G;K_2)= |E(G)|$.
\end{proof}

Theorem~\ref{thm:injective-category-compute} implies the following example.

\begin{example}\label{exam:ic-clique-biclique}
  \noindent\begin{enumerate}
      \item[(1)] For any $n\geq 2$, it follows that \[\mathrm{IC}(K_n;K_2)=\dfrac{n(n-1)}{2}.\]
      \item[(2)] For any $p,q\geq 1$, we have \[\mathrm{IC}(K_{p,q};K_2)=pq.\]
  \end{enumerate}  
\end{example}

\begin{remark}
    Given a graph $G$ with $E(G)=\varnothing$, let $m=\dfrac{|V(G)|}{2}$. In this case, we note that the equality $\mathrm{IC}(G;K_2)=\lceil m\rceil$ holds.
\end{remark}

\subsection{Sub-additivity} The following statement demonstrates the sub-additivity property of (injective) hom-complexity. 

\begin{theorem}[Sub-additivity]\label{thm:category-union}
    Let $G,H$ be graphs, and let $A,B$ be subgraphs of $G$ such that $G=A\cup B$. Then: \begin{itemize}
        \item[(1)] $\max\{\mathrm{C}(A;H),\mathrm{C}(B;H)\}\leq \mathrm{C}(G;H)\leq \mathrm{C}(A;H)+\mathrm{C}(B;H).$ 
         \item[(2)] $\max\{\mathrm{IC}(A;H),\mathrm{IC}(B;H)\}\leq \mathrm{IC}(G;H)\leq \mathrm{IC}(A;H)+\mathrm{IC}(B;H).$ 
    \end{itemize} 
\end{theorem}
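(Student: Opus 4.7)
The plan is to handle the two inequalities separately: the upper bound by an explicit construction, and the lower bound as a direct consequence of the monotonicity result in Theorem~\ref{prop:complexity-subgraphs}.

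For the upper bound in part (1), I would set $m=\mathrm{C}(A;H)$ and $n=\mathrm{C}(B;H)$, and fix optimal quasi-homomorphisms $\{f_i:A_i\to H\}_{i=1}^{m}$ from $A$ to $H$ and $\{g_j:B_j\to H\}_{j=1}^{n}$ from $B$ to $H$, so that $A=A_1\cup\cdots\cup A_m$ and $B=B_1\cup\cdots\cup B_n$. Each $A_i$ and each $B_j$ is a subgraph of $G$, and since $G=A\cup B$ by hypothesis, we obtain
\[
G = A_1\cup\cdots\cup A_m \cup B_1\cup\cdots\cup B_n.
\]
Thus the concatenated collection $\{f_i:A_i\to H\}_{i=1}^{m}\cup\{g_j:B_j\to H\}_{j=1}^{n}$ is a quasi-homomorphism from $G$ to $H$ consisting of $m+n$ pieces, which yields $\mathrm{C}(G;H)\leq m+n$. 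For part (2), exactly the same construction works because each $f_i$ and each $g_j$ is an injective homomorphism, so the concatenation is an injective quasi-homomorphism.

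For the lower bound, I observe that $A$ and $B$ are subgraphs of $G$, hence the inclusions $A\hookrightarrow G$ and $B\hookrightarrow G$ are homomorphisms (and in fact injective homomorphisms). Applying Theorem~\ref{prop:complexity-subgraphs}(1) with $G'=A$ (respectively $G'=B$) and the given target $H$, we obtain $\mathrm{C}(A;H)\leq\mathrm{C}(G;H)$ and $\mathrm{C}(B;H)\leq\mathrm{C}(G;H)$, so $\max\{\mathrm{C}(A;H),\mathrm{C}(B;H)\}\leq\mathrm{C}(G;H)$. For part (2), since the inclusions are injective homomorphisms, Theorem~\ref{prop:complexity-subgraphs}(2) applies and gives the analogous inequality for $\mathrm{IC}$.

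There is essentially no obstacle here: the upper bound is a one-line gluing argument and the lower bound is an immediate invocation of monotonicity. The only small point to be careful about is that the subgraphs $A_i$ and $B_j$ must be regarded as subgraphs of $G$ (not merely of $A$ and $B$ separately), which is automatic since subgraph-of is transitive, and that the union of the two families indeed covers $G$, which uses the hypothesis $G=A\cup B$ in an essential way.
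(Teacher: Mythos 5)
Your proposal is correct and follows essentially the same route as the paper's proof: the lower bound via Theorem~\ref{prop:complexity-subgraphs} applied to the inclusions $A\hookrightarrow G$ and $B\hookrightarrow G$, and the upper bound by concatenating optimal (injective) quasi-homomorphisms of $A$ and $B$ into one for $G=A\cup B$. Your explicit remark that the $A_i$ and $B_j$ are subgraphs of $G$ by transitivity and that the union covers $G$ is a nice touch the paper leaves implicit.
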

\begin{proof}
\noindent \begin{itemize}
        \item[(1)] The inequality $\max\{\mathrm{C}(A;H),\mathrm{C}(B;H)\}\leq \mathrm{C}(G;H)$ follows from Theorem~\ref{prop:complexity-subgraphs}(1), applied to the inclusions $A\hookrightarrow G$ and $B\hookrightarrow G$. To demonstrate the other inequality, suppose that $\mathrm{C}(A;H)=m$ and $\mathrm{C}(B;H)=k$. Let $\{f_i:F_i\to H\}_{i=1}^{m}$ be an optimal quasi-homomorphism from $A$ to $H$,  and $\{g_j:G_j\to H\}_{j=1}^{k}$ be an optimal quasi-homomorphism from $B$ to $H$. The combined collection $\{f_1:F_1\to H,\ldots,f_m:F_m\to H,g_1:G_1\to H,\ldots,g_k:G_k\to H\}$ forms a quasi-homomorphism from $G$ to $H$. Therefore, we have $\mathrm{C}(G;H)\leq m+k=\mathrm{C}(A;H)+\mathrm{C}(B;H)$. 

        This completes the proof of the sub-additivity of hom-complexity.

        \item[(2)] The inequality $\max\{\mathrm{IC}(A;H),\mathrm{IC}(B;H)\}\leq \mathrm{IC}(G;H)$ follows from Theorem~\ref{prop:complexity-subgraphs}(2), applied to the inclusions $A\hookrightarrow G$ and $B\hookrightarrow G$. To establish the other inequality, suppose that $\mathrm{IC}(A;H)=m$ and $\mathrm{IC}(B;H)=k$. Let $\{f_i:F_i\to H\}_{i=1}^{m}$ be an optimal injective quasi-homomorphism from $A$ to $H$ and $\{g_j:G_j\to H\}_{j=1}^{k}$ be an optimal injective quasi-homomorphism from $B$ to $H$. Then, the combined collection $\{f_1:F_1\to H,\ldots,f_m:F_m\to H,g_1:G_1\to H,\ldots,g_k:G_k\to H\}$ is an injective quasi-homomorphism from $G$ to $H$. Consequently, we have $\mathrm{IC}(G;H)\leq m+k=\mathrm{IC}(A;H)+\mathrm{IC}(B;H)$.

        This completes the proof of the sub-additivity of injective hom-complexity.
    \end{itemize}  
\end{proof}

Theorem~\ref{thm:category-union} implies the following corollary:

\begin{corollary}\label{cor:linear-complexity}
    Let $G$ and $H$ be graphs, and $A$ and $T$ be subgraphs of $G$ such that $G=A\cup T$. Then:
    \begin{enumerate}
        \item[(1)] If $\mathrm{C}(T;H)=1$, then \[\mathrm{C}(A;H)\leq \mathrm{C}(G;H)\leq \mathrm{C}(A;H)+1.\] 
        \item[(2)] If $\mathrm{IC}(T;H)=1$, then \[\mathrm{IC}(A;H)\leq \mathrm{IC}(G;H)\leq \mathrm{IC}(A;H)+1.\] 
    \end{enumerate}
\end{corollary}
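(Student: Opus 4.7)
The plan is to derive this corollary as a direct consequence of the sub-additivity theorem (Theorem~\ref{thm:category-union}), which was just proved. Both statements have essentially the same structure, so the argument is uniform for the hom-complexity and the injective hom-complexity cases.

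For part (1), I would first invoke the left-hand inequality of Theorem~\ref{thm:category-union}(1) applied to the decomposition $G = A \cup T$. This yields
\[
\mathrm{C}(A;H) \leq \max\{\mathrm{C}(A;H), \mathrm{C}(T;H)\} \leq \mathrm{C}(G;H),
\]
giving the lower bound. For the upper bound, I would apply the right-hand inequality of Theorem~\ref{thm:category-union}(1) to the same decomposition and substitute the hypothesis $\mathrm{C}(T;H) = 1$:
\[
\mathrm{C}(G;H) \leq \mathrm{C}(A;H) + \mathrm{C}(T;H) = \mathrm{C}(A;H) + 1.
\]
Combining the two inequalities gives the chain $\mathrm{C}(A;H) \leq \mathrm{C}(G;H) \leq \mathrm{C}(A;H) + 1$.

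Part (2) follows by the same reasoning, substituting Theorem~\ref{thm:category-union}(2) for Theorem~\ref{thm:category-union}(1) throughout, and using the hypothesis $\mathrm{IC}(T;H) = 1$ in place of $\mathrm{C}(T;H) = 1$.

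Since the entire argument is a two-line application of an already-established theorem, there is no real obstacle; the proof is purely mechanical. The only minor thing to verify is that the hypothesis is being used in the correct slot (namely, as the second summand in the sub-additivity bound), and that the inclusions $A \hookrightarrow G$ and $T \hookrightarrow G$ justify using Theorem~\ref{thm:category-union} here, which they trivially do since $A$ and $T$ are assumed to be subgraphs of $G$ with $G = A \cup T$.
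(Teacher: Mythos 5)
Your proof is correct and matches the paper's intent exactly: the corollary is stated as an immediate consequence of Theorem~\ref{thm:category-union}, obtained by applying its lower bound directly and substituting $\mathrm{C}(T;H)=1$ (resp.\ $\mathrm{IC}(T;H)=1$) into its upper bound. Nothing further is needed.
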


\medskip The \textit{path} $P_m$ is the graph with vertices $1,2,\ldots,m$ and edges $12,23,\ldots,(m-1)m$.

\begin{example}\label{exam:cicle-k2}
    Consider the cycle $C_{m}$, where $m\geq 3$, given by \begin{align*}
        V(C_m)&=\{1,2,\ldots,m\},\\
        E(C_m)&=\{12,23,\ldots,(m-1)m,m1\}.\\
    \end{align*} Note that $C_m=P_m\cup K_2$ and $\mathrm{C}(P_m;K_2)=1$, $C(K_2;K_2)=1$. Here, $V(K_2)=\{1,m\}$ and $E(K_2)=\{m1\}$. Therefore, $\mathrm{C}(C_m;K_2)\leq 2$ (see Corollary~\ref{cor:linear-complexity}(1)). We will show that: \[\mathrm{C}(C_m;K_2)=\begin{cases}
        1,&\hbox{ if $m$ is even;}\\
         2,&\hbox{ if $m$ is odd.}\\
    \end{cases}\] Indeed, in the case where $m=2k$, consider the map $f:V(C_{2k})\to K_2$ defined by: \[
    f(i)=\begin{cases}
        1,&\hbox{ for $i$ odd and $1\leq i \leq 2k-1$;}\\
        m,&\hbox{ for $i$ even and $2\leq i \leq 2k$.}\\
    \end{cases}
    \] Note that $f$ is a homomorphism from $C_{2k}$ to $K_2$ and thus $\mathrm{C}(C_{2k};K_2)=1$. 

    On the other hand, since $\chi(C_m)=3$ for any odd $m\geq 3$, we have $\mathrm{C}(C_{m};K_2)\geq 2$. Therefore, $\mathrm{C}(C_{m};K_2)=2$ for any odd $m\geq 3$. 
\end{example}

The following result shows that the first inequality of Theorem~\ref{thm:category-union}(1) can be an equality. 

\begin{proposition}\label{prop:complexity-disjoint-union}
  Let $G$ be a graph, and let $A$ and $B$ be subgraphs of $G$ such that $V(A)\cap V(B)=\varnothing$ and $G=A\sqcup B$ (see (\ref{eq:disjoint-union})). Then, for any graph $H$, we have \[\mathrm{C}(G;H)=\max\{\mathrm{C}(A;H),\mathrm{C}(B,H)\}.\]   
\end{proposition}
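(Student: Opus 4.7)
The plan is to prove the two inequalities separately. The lower bound $\max\{\mathrm{C}(A;H),\mathrm{C}(B;H)\}\leq\mathrm{C}(G;H)$ is already built in: since $A\hookrightarrow G$ and $B\hookrightarrow G$ are (injective) inclusion homomorphisms, Theorem~\ref{prop:complexity-subgraphs}(1) — or equivalently the first inequality of Theorem~\ref{thm:category-union}(1) — immediately yields $\mathrm{C}(A;H)\leq\mathrm{C}(G;H)$ and $\mathrm{C}(B;H)\leq\mathrm{C}(G;H)$, and hence the maximum is a lower bound.

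The substantive direction is the upper bound $\mathrm{C}(G;H)\leq\max\{\mathrm{C}(A;H),\mathrm{C}(B;H)\}$. Put $m=\mathrm{C}(A;H)$ and $n=\mathrm{C}(B;H)$, and without loss of generality assume $m\geq n$. Let $\{f_i:A_i\to H\}_{i=1}^{m}$ and $\{g_j:B_j\to H\}_{j=1}^{n}$ be optimal quasi-homomorphisms from $A$ and from $B$ to $H$, respectively. Choose any surjection $\sigma:\{1,\ldots,m\}\to\{1,\ldots,n\}$ (which exists because $m\geq n$), for instance $\sigma(i)=i$ for $i\leq n$ and $\sigma(i)=1$ otherwise. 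For each $i\in\{1,\ldots,m\}$, define the subgraph
\[
G_i := A_i\sqcup B_{\sigma(i)}\subseteq G,
\]
where the disjoint union is well-defined since $V(A)\cap V(B)=\varnothing$ (see~(\ref{eq:disjoint-union})). By the construction recalled right after~(\ref{eq:disjoint-union}), the map $f_i\sqcup g_{\sigma(i)}:G_i\to H$ is a homomorphism.

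It remains to check that $G=\bigcup_{i=1}^{m}G_i$. Taking vertices and edges, $\bigcup_{i=1}^{m}A_i=A$ since $\{A_i\}$ covers $A$, and $\bigcup_{i=1}^{m}B_{\sigma(i)}=\bigcup_{j=1}^{n}B_j=B$ because $\sigma$ is surjective and $\{B_j\}$ covers $B$. Therefore $\bigcup_{i=1}^{m}G_i=A\sqcup B=G$, so $\{f_i\sqcup g_{\sigma(i)}:G_i\to H\}_{i=1}^{m}$ is a quasi-homomorphism from $G$ to $H$, giving $\mathrm{C}(G;H)\leq m=\max\{\mathrm{C}(A;H),\mathrm{C}(B;H)\}$. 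Combined with the lower bound, equality follows.

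I do not expect a real obstacle here: the only delicate point is the pigeonhole-style pairing of the two covers, which works precisely because the disjointness of $A$ and $B$ lets us glue any $f_i$ to any $g_j$ without compatibility conditions on overlapping vertices. If one were tempted to prove this without invoking disjointness, the argument would break down, since edges between $A$ and $B$ (absent here by hypothesis) could force the two homomorphisms to agree on shared vertices.
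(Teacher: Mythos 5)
Your proof is correct and follows essentially the same route as the paper: the lower bound via the inclusion homomorphisms, and the upper bound by pairing the pieces of the two covers through the disjoint-union construction of homomorphisms. The only cosmetic difference is that you pair the covers via an explicit surjection $\sigma$, whereas the paper implicitly pads both quasi-homomorphisms to the common length $m=\max\{\mathrm{C}(A;H),\mathrm{C}(B;H)\}$ and takes $f_i\sqcup g_i$ directly.
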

\begin{proof}
 The inequality $\max\{\mathrm{C}(A;H),\mathrm{C}(B,H)\}\leq \mathrm{C}(A\sqcup B;H)$ follows from Theorem~\ref{thm:category-union}(1). We will now verify the inequality $\mathrm{C}(A\sqcup B;H)\leq m$, where $m=\max\{\mathrm{C}(A;H),\mathrm{C}(B,H)\}$. In fact, let $\{f_i:A_i\to H\}_{i=1}^{m}$ and $\{g_i:B_i\to H\}_{i=1}^{m}$ be quasi-homomorphisms from $A$ to $H$ and from $B$ to $H$, respectively. Note that the collection $\{f_i\sqcup g_i:A_i\sqcup B_i\to H\}_{i=1}^{m}$ forms a quasi-homomorphism from $G$ to $H$. Therefore, we have $\mathrm{C}(G;H)\leq m=\max\{\mathrm{C}(A;H),\mathrm{C}(B,H)\}$.
\end{proof}

Observe that the condition $V(A)\cap V(B)=\varnothing$ in  Proposition~\ref{prop:complexity-disjoint-union} cannot be removed. To illustrate this, consider the graphs $A$ and $B$ defined as follows:  $V(A)=\{1,2,3\}$, $E(A)=\{12,13\}$, $V(B)=\{2,3\}$ and $E(B)=\{23\}$. Note that, $\mathrm{C}(A;K_2)=1$ and $\mathrm{C}(B;K_2)=1$. However, since $A\cup B=K_3$, we have $\mathrm{C}(A\cup B;K_2)=2$. 

\subsection{Product inequality}
Given two graphs $G_1$ and $G_2$, the \textit{graph product} $G_1\times G_2$ is defined by $V(G_1\times G_2)=V(G_1)\times V(G_2)$, and two vertices $u=(u_1,u_2)$ and $v=(v_1,v_2)$ in $V(G_1\times G_2)$ are adjacent if $u_1v_1\in E(G_1)$ and $u_2v_2\in E(G_2)$ (see \cite[p. 37]{hell2004}). Given two homomorphisms $f_1:G_1\to H_1$ and  $f_2:G_2\to H_2$, their \textit{product} $f_1\times f_2:V(G_1)\times V(G_2)\to V(H_1)\times V(H_2)$ is defined as $(f_1\times f_2)(u_1,u_2)=(f_1(u_1),f_2(u_2))$. This forms a homomorphism from $G_1\times G_2$ to $H_1\times H_2$. Note that if $A$ is a subgraph of $G_1$ and $B$ is a subgraph of $G_2$, then $A\times B$ is a subgraph of $G_1\times G_2$. Furthermore, each coordinate projection $p_j:G_1\times G_2\to G_j$ defined by $p_j(u_1,u_2)=u_j$ for $j=1,2$ is a homomorphism. In addition, the \textit{diagonal homomorphism} $D_G:G\to G\times G$ given by $D_G(u)=(u,u)$ for all $u\in V(G)$ is injective. Hence, we have  $\mathrm{C}(G_1\times G_2;G_j)=1$ for each $j=1,2$, and $\mathrm{C}(G;G\times G)=\mathrm{IC}(G;G\times G)=1$.

\medskip We have the following statement.

\begin{proposition}[Diagonal Product Property]\label{prop:diagonal}
    Let $G$ and $H$ be graphs. The following hold: \begin{enumerate}
        \item[(1)]  $\mathrm{C}(G;H\times H)=\mathrm{C}(G;H)=\mathrm{C}(G\times G;H)$.
         \item[(2)]  $\mathrm{IC}(G;H\times H)\leq \mathrm{IC}(G;H)\leq \mathrm{IC}(G\times G;H)$. 
    \end{enumerate}
\end{proposition}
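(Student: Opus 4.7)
The plan is to deduce this proposition as a direct corollary of Theorem~\ref{prop:complexity-subgraphs}, applied to the canonical homomorphisms linking a graph to its product with itself. Specifically, for any graph $X$ we have two natural homomorphisms: the diagonal $D_X\colon X\to X\times X$, $u\mapsto(u,u)$, which the paper has already observed is injective, and the coordinate projection $p_1\colon X\times X\to X$, $(u_1,u_2)\mapsto u_1$, which is a homomorphism but is generally not injective. These are the only two ingredients I need.

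For part (1), I would first apply Theorem~\ref{prop:complexity-subgraphs}(1) with $H'=H$ and target graph $H\times H$, using the diagonal $D_H\colon H\to H\times H$, to conclude $\mathrm{C}(G;H\times H)\leq \mathrm{C}(G;H)$. Then I would apply the same theorem in the opposite direction, using the projection $p_1\colon H\times H\to H$ (viewing $H\times H$ as $H'$), to conclude $\mathrm{C}(G;H)\leq \mathrm{C}(G;H\times H)$; combining these yields the first equality. The second equality $\mathrm{C}(G;H)=\mathrm{C}(G\times G;H)$ is handled analogously but on the source side: the diagonal $D_G\colon G\to G\times G$ gives $\mathrm{C}(G;H)\leq \mathrm{C}(G\times G;H)$ by Theorem~\ref{prop:complexity-subgraphs}(1), while the projection $p_1\colon G\times G\to G$ gives $\mathrm{C}(G\times G;H)\leq \mathrm{C}(G;H)$.

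For part (2), I would apply Theorem~\ref{prop:complexity-subgraphs}(2), which requires the homomorphisms in question to be injective. Since the diagonal $D_H\colon H\to H\times H$ is injective, I immediately obtain $\mathrm{IC}(G;H\times H)\leq \mathrm{IC}(G;H)$; since $D_G\colon G\to G\times G$ is injective, I obtain $\mathrm{IC}(G;H)\leq \mathrm{IC}(G\times G;H)$. Chaining these produces the claimed inequality.

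There is essentially no hard step here, but the one point worth emphasizing in the write-up is \emph{why} part (2) is only an inequality rather than an equality: the coordinate projections $p_1\colon H\times H\to H$ and $p_1\colon G\times G\to G$ fail to be injective whenever $|V(H)|\geq 2$ or $|V(G)|\geq 2$ (for instance, $D_X(u)$ and $(u,v)$ with $v\neq u$ share a first coordinate), so Theorem~\ref{prop:complexity-subgraphs}(2) cannot be invoked in the reverse direction. Thus the chain of inequalities is all one is entitled to in the injective setting from the methods available so far.
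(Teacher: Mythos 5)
Your proposal is correct and follows exactly the paper's own argument: Theorem~\ref{prop:complexity-subgraphs}(1) applied to the diagonal and a coordinate projection for part (1), and Theorem~\ref{prop:complexity-subgraphs}(2) applied to the (injective) diagonal for part (2). The extra remark on why the projections cannot be used in the injective case is a sensible clarification but does not change the substance.
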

\begin{proof}
    \noindent\begin{enumerate}
        \item[(1)] This follows from Theorem~\ref{prop:complexity-subgraphs}(1), applied to one of the projections along with the diagonal homomorphism.
        \item[(2)] This follows from Theorem~\ref{prop:complexity-subgraphs}(2), applied to the diagonal homomorphism.
    \end{enumerate}
\end{proof}

A direct consequence of Proposition~\ref{prop:diagonal}(1) is the following corollary. 

\begin{corollary}\label{cor:complexity-product-product}
    Let $G$ and $H$ be graphs. Then, we have \[\mathrm{C}(G\times G;H\times H)=\mathrm{C}(G;H).
    \]
\end{corollary}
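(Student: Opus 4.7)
The plan is to observe that this corollary is an immediate iterated application of Proposition~\ref{prop:diagonal}(1). That proposition gives two separate equalities, one absorbing a diagonal on the target side and one absorbing a diagonal on the source side, and the statement to prove simply combines both operations at once. So the first step is to apply the first equality of Proposition~\ref{prop:diagonal}(1) with the graph $G$ replaced by $G \times G$: this yields
\[
\mathrm{C}(G \times G;\, H \times H) \;=\; \mathrm{C}(G \times G;\, H).
\]
The second step is to apply the second equality of Proposition~\ref{prop:diagonal}(1) (with the original $G$ and $H$), which gives
\[
\mathrm{C}(G \times G;\, H) \;=\; \mathrm{C}(G;\, H).
\]
Chaining these two equalities delivers the desired identity $\mathrm{C}(G \times G;\, H \times H) = \mathrm{C}(G;\, H)$.

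There is no real obstacle here; the corollary is a formal consequence of the previously established diagonal product property, so the proof amounts to little more than citing Proposition~\ref{prop:diagonal}(1) twice. The only minor point to be careful about is the direction of the composition: one must apply the ``source-diagonal'' equality on the pair $(G\times G, H)$ rather than on $(G,H)$, so that the source graph is already $G\times G$ when one absorbs the diagonal on the target. Once this bookkeeping is noted, the proof fits in two lines, and it is natural to present it simply as a one-step corollary without introducing new constructions. It is also worth remarking in passing that the analogous statement for $\mathrm{IC}$ need not hold as an equality, since Proposition~\ref{prop:diagonal}(2) only provides inequalities; this is why the corollary is stated solely for $\mathrm{C}$.
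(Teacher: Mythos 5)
Your proposal is correct and matches the paper's own proof exactly: both apply the target-diagonal equality of Proposition~\ref{prop:diagonal}(1) with $G$ replaced by $G\times G$, and then the source-diagonal equality to reduce $\mathrm{C}(G\times G;H)$ to $\mathrm{C}(G;H)$. Your closing remark about why the statement is not claimed for $\mathrm{IC}$ is a sensible observation, though not part of the paper's argument.
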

\begin{proof}
   From Proposition~\ref{prop:diagonal}(1), we have:\begin{align*}
        \mathrm{C}(G\times G;H\times H)&=\mathrm{C}(G\times G;H)\\
        &=\mathrm{C}(G;H).
    \end{align*}
\end{proof}

The following statement presents the product inequality. 

\begin{theorem}[Product Inequality]\label{thm:product-inequality-complexity}
    Let $G_1$, $G_2$, $H_1$ and $H_2$ be graphs. Then, we have:
    \begin{enumerate}
        \item[(1)]  $\max\{\mathrm{C}(G_1\times G_2;H_1),\mathrm{C}(G_1\times G_2;H_2)\}\leq \mathrm{C}(G_1\times G_2;H_1\times H_2)\leq \min\{\mathrm{C}(G_1;H_1)\cdot \mathrm{C}(G_2;H_2),\mathrm{C}(G_1;H_1\times H_2),\mathrm{C}(G_2;H_1\times H_2)\}.$
         \item[(2)]  $\mathrm{IC}(G_1\times G_2;H_1\times H_2)\leq \mathrm{IC}(G_1;H_1)\cdot \mathrm{IC}(G_2;H_2).$ 
    \end{enumerate} 
\end{theorem}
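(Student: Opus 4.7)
The strategy is to obtain the two lower bounds and the first two summands of the minimum as direct consequences of Theorem~\ref{prop:complexity-subgraphs}, and then to establish the product-of-complexities upper bound by forming products of optimal quasi-homomorphisms; part (2) follows the same construction with the extra observation that a product of injective maps is injective.

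For the lower bound in (1), I would note that for $j=1,2$ the coordinate projection $p_j\colon H_1\times H_2\to H_j$ is a homomorphism, so Theorem~\ref{prop:complexity-subgraphs}(1) (applied to the target) yields $\mathrm{C}(G_1\times G_2;H_j)\leq \mathrm{C}(G_1\times G_2;H_1\times H_2)$; taking the maximum over $j$ gives the left inequality. For the bounds $\mathrm{C}(G_1\times G_2;H_1\times H_2)\leq \mathrm{C}(G_j;H_1\times H_2)$, I would apply Theorem~\ref{prop:complexity-subgraphs}(1) in the source variable to the projection homomorphism $p_j\colon G_1\times G_2\to G_j$.

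The genuinely new content is the product bound $\mathrm{C}(G_1\times G_2;H_1\times H_2)\leq \mathrm{C}(G_1;H_1)\cdot\mathrm{C}(G_2;H_2)$. Set $m=\mathrm{C}(G_1;H_1)$ and $n=\mathrm{C}(G_2;H_2)$, and pick optimal quasi-homomorphisms $\{f_i\colon A_i\to H_1\}_{i=1}^{m}$ and $\{g_j\colon B_j\to H_2\}_{j=1}^{n}$. For each pair $(i,j)$ the product $f_i\times g_j\colon A_i\times B_j\to H_1\times H_2$ is a homomorphism, and $A_i\times B_j$ is a subgraph of $G_1\times G_2$. The step that needs verification, and which I would flag as the main (though mild) obstacle, is the covering identity
\[
G_1\times G_2 \;=\; \bigcup_{i=1}^{m}\bigcup_{j=1}^{n} A_i\times B_j.
\]
On vertices this is immediate from $V(G_1)=\bigcup_i V(A_i)$ and $V(G_2)=\bigcup_j V(B_j)$. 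On edges, given $(u_1,u_2)(v_1,v_2)\in E(G_1\times G_2)$ one has $u_1v_1\in E(G_1)$ and $u_2v_2\in E(G_2)$, so there exist indices $i,j$ with $u_1v_1\in E(A_i)$ and $u_2v_2\in E(B_j)$; by the definition of the graph product this edge then lies in $E(A_i\times B_j)$. This gives a quasi-homomorphism of size $mn$ and hence the inequality.

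For part (2), I would repeat the construction with optimal injective quasi-homomorphisms $\{f_i\colon A_i\to H_1\}$ and $\{g_j\colon B_j\to H_2\}$. Since vertex-injectivity of $f_i$ and $g_j$ immediately implies vertex-injectivity of $f_i\times g_j$ on $V(A_i)\times V(B_j)$, and since for graphs vertex-injective homomorphisms are injective, the family $\{f_i\times g_j\colon A_i\times B_j\to H_1\times H_2\}_{i,j}$ is an injective quasi-homomorphism of size $\mathrm{IC}(G_1;H_1)\cdot\mathrm{IC}(G_2;H_2)$, yielding the stated bound.
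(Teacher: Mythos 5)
Your proposal is correct and follows essentially the same route as the paper: the extreme bounds come from Theorem~\ref{prop:complexity-subgraphs}(1) applied to the coordinate projections, and the product bound comes from taking the $mn$ pairwise products $f_i\times g_j\colon A_i\times B_j\to H_1\times H_2$ of optimal (injective) quasi-homomorphisms. The only difference is that you spell out the covering identity $G_1\times G_2=\bigcup_{i,j}A_i\times B_j$, which the paper leaves implicit; that verification is correct and harmless.
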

\begin{proof}
 \noindent\begin{enumerate}
     \item[(1)] Set $m=\mathrm{C}(G_1;H_1)$, $n=\mathrm{C}(G_2;H_2)$,  and let $\mathcal{M}_1=\{f_{i,1}:G_{i,1}\to H_1\}_{i=1}^{m}$ and  $\mathcal{M}_2=\{f_{j,2}:G_{j,2}\to H_2\}_{j=1}^{n}$ be optimal quasi-homomorphisms from $G_1$ to $H_1$ and from $G_2$ to $H_2$, respectively. The collection $\mathcal{M}_1\times \mathcal{M}_2=\{f_{i,1}\times f_{j,2}:G_{i,1}\times G_{j,2}\to H_1\times H_2\}_{i=1,j=1}^{m,n}$ is a quasi-homomorphism from $G_1\times G_2$ to $H_1\times H_2$. Thus, we have $\mathrm{C}(G_1\times G_2;H_1\times H_2)\leq m\cdot n=\mathrm{C}(G_1;H_1)\cdot \mathrm{C}(G_2;H_2)$. 

    The other inequalities follow from Theorem~\ref{prop:complexity-subgraphs}(1), applied to each projection $G_1\times G_2\to G_j$ and $H_1\times H_2\to H_j$. 
     \item[(2)] Let $m=\mathrm{IC}(G_1;H_1)$, $n=\mathrm{iC}(G_2;H_2)$, and let $\mathcal{M}_1=\{f_{i,1}:G_{i,1}\to H_1\}_{i=1}^{m}$ and  $\mathcal{M}_2=\{f_{j,2}:G_{j,2}\to H_2\}_{j=1}^{n}$ be optimal injective quasi-homomorphisms from $G_1$ to $H_1$ and from $G_2$ to $H_2$, respectively. The collection $\mathcal{M}_1\times \mathcal{M}_2=\{f_{i,1}\times f_{j,2}:G_{i,1}\times G_{j,2}\to H_1\times H_2\}_{i=1,j=1}^{m,n}$ is an injective quasi-homomorphism from $G_1\times G_2$ to $H_1\times H_2$. Thus, we have $\mathrm{IC}(G_1\times G_2;H_1\times H_2)\leq m\cdot n=\mathrm{IC}(G_1;H_1)\cdot \mathrm{IC}(G_2;H_2)$. 
 \end{enumerate}   
\end{proof}

Theorem~\ref{thm:product-inequality-complexity}(1), in conjunction  with Proposition~\ref{prop:diagonal}(1), implies the following statement.

\begin{proposition}\label{prop:G-H-H}
 Let $G$, $H_1$, and $H_2$ be graphs. Then, the following inequalities hold: \[\max\{\mathrm{C}(G;H_1),\mathrm{C}(G;H_2)\}\leq\mathrm{C}(G;H_1\times H_2)\leq\mathrm{C}(G;H_1)\cdot\mathrm{C}(G;H_2).\]   
\end{proposition}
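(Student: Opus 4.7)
The plan is to derive both inequalities directly from earlier results in the paper, without constructing any new quasi-homomorphism from scratch.

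For the lower bound, I would invoke the coordinate projections $p_j : H_1 \times H_2 \to H_j$ for $j=1,2$, which are homomorphisms (as noted in the paragraph preceding Proposition~\ref{prop:diagonal}). Applying Theorem~\ref{prop:complexity-subgraphs}(1) with $H' = H_1\times H_2$ and $H = H_j$, we obtain $\mathrm{C}(G;H_j) \leq \mathrm{C}(G;H_1\times H_2)$ for each $j=1,2$. Taking the maximum over $j \in \{1,2\}$ yields the desired bound
\[
\max\{\mathrm{C}(G;H_1),\mathrm{C}(G;H_2)\}\leq\mathrm{C}(G;H_1\times H_2).
\]

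For the upper bound, I would specialize Theorem~\ref{thm:product-inequality-complexity}(1) to $G_1 = G_2 = G$, which immediately gives
\[
\mathrm{C}(G\times G;H_1\times H_2) \leq \mathrm{C}(G;H_1)\cdot \mathrm{C}(G;H_2).
\]
To pass from $G\times G$ to $G$ on the left, I would invoke the diagonal homomorphism $D_G : G \to G\times G$, recalled in the same paragraph before Proposition~\ref{prop:diagonal}. Applying Theorem~\ref{prop:complexity-subgraphs}(1) with $G' = G$, $G = G\times G$, and $H = H_1\times H_2$, we obtain $\mathrm{C}(G;H_1\times H_2) \leq \mathrm{C}(G\times G;H_1\times H_2)$. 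Chaining these two inequalities produces
\[
\mathrm{C}(G;H_1\times H_2) \leq \mathrm{C}(G\times G;H_1\times H_2) \leq \mathrm{C}(G;H_1)\cdot \mathrm{C}(G;H_2),
\]
which completes the proof.

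There is no real obstacle here: the only point requiring care is invoking Theorem~\ref{prop:complexity-subgraphs}(1) in the correct direction (contravariant in the source, covariant in the target of the hom-complexity), namely using projections $H_1\times H_2 \to H_j$ for the lower bound and the diagonal $G \to G\times G$ for the upper bound. Everything else reduces to direct citation.
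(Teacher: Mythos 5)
Your proof is correct and follows essentially the same route as the paper, which simply cites Theorem~\ref{thm:product-inequality-complexity}(1) together with Proposition~\ref{prop:diagonal}(1); you have merely unpacked Proposition~\ref{prop:diagonal}(1) into its underlying applications of Theorem~\ref{prop:complexity-subgraphs}(1) via the projections $H_1\times H_2\to H_j$ and the diagonal $D_G:G\to G\times G$. Both directions are invoked correctly, so nothing further is needed.
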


Hence, we obtain the following example.

\begin{example}\label{exam:com-1-other}
    Let $G$, $H_1$, and $H_2$ be graphs. Suppose that $\{i,j\}=\{1,2\}$. Proposition~\ref{prop:G-H-H} implies that if $\mathrm{C}(G;H_j)=1$,  then $\mathrm{C}(G;H_1\times H_2)=\mathrm{C}(G;H_i)$. 
\end{example}


\subsection{Lower bound} Let $G$ be a simple graph, and  let $G_1,\ldots,G_m$ be subgraphs of $G$ such that $G=G_1\cup\cdots\cup G_m$. Then, we have $\chi(G)\leq \prod_{j=1}^m\chi(G_j)$ (see Proposition~\ref{prop:chromatic-union}). Hence, we obtain the following lower bound for hom-complexity.

\begin{theorem}[Lower Bound]\label{thm:lower-bound}
 Let $G$ and $H$ be simple graphs. The inequality \[\chi(G)\leq\left(\chi(H)\right)^{\mathrm{C}(G;H)}\] holds. Equivalently, $\log_{\chi(H)}\chi(G)\leq \mathrm{C}(G;H)$.    
\end{theorem}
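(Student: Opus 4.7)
The plan is to chain together two facts already available in the excerpt: the multiplicative bound on chromatic number under unions (Proposition~\ref{prop:chromatic-union}) and the monotonicity $\chi(G_j)\leq\chi(H)$ whenever $G_j\to H$ (the fact cited from \cite[Corollary~1.8, p.~7]{hell2004}).

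First, I would set $k=\mathrm{C}(G;H)$ and unpack the definition of hom-complexity to obtain an optimal quasi-homomorphism $\mathcal{M}=\{f_j:G_j\to H\}_{j=1}^{k}$, so that $G=G_1\cup\cdots\cup G_k$ and each $G_j$ is $H$-colourable. Note that if $\mathrm{C}(G;H)=\infty$ the inequality is vacuous, so we may assume $k$ is finite; also the statement is trivial if $\chi(G)=1$ (no edges), so we may assume $\chi(H)\geq 2$ and hence the logarithmic form makes sense.

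Next, for each $j$ the existence of a homomorphism $G_j\to H$ gives $\chi(G_j)\leq\chi(H)$. Applying Proposition~\ref{prop:chromatic-union} to the decomposition $G=G_1\cup\cdots\cup G_k$, we get
\[
\chi(G)\;\leq\;\prod_{j=1}^{k}\chi(G_j)\;\leq\;\prod_{j=1}^{k}\chi(H)\;=\;\bigl(\chi(H)\bigr)^{k}\;=\;\bigl(\chi(H)\bigr)^{\mathrm{C}(G;H)}.
\]
Taking $\log_{\chi(H)}$ of both sides (valid since $\chi(H)\geq 2$) yields the equivalent form $\log_{\chi(H)}\chi(G)\leq\mathrm{C}(G;H)$.

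There is no real obstacle here: both ingredients are already proved in the excerpt, and the argument is simply combining them at the optimal decomposition realizing $\mathrm{C}(G;H)$. The only mild care needed is to handle the degenerate cases ($\mathrm{C}(G;H)=\infty$, or $\chi(H)\leq 1$ which would force $\chi(G)\leq 1$ as well for the inequality to have the stated content) so that the logarithmic reformulation is well-defined.
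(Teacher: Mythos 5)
Your argument is correct and is essentially identical to the paper's own proof: both set $k=\mathrm{C}(G;H)$, take an optimal decomposition $G=G_1\cup\cdots\cup G_k$ with each $G_j$ being $H$-colourable, and combine $\chi(G_j)\leq\chi(H)$ with Proposition~\ref{prop:chromatic-union}. The paper additionally sketches a second argument via the graph $H^{(k)}$ from Remark~\ref{rem:def-ob}(5), but its main proof is the one you gave.
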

\begin{proof}
 If $\mathrm{C}(G;H)=\infty$, then the desired inequality holds. Now, suppose that $m=\mathrm{C}(G;H)<\infty$ and consider $G_1,\ldots,G_m$ subgraphs of $G$ such that $G=G_1\cup\cdots\cup G_m$, with a homomorphism $G_j\to H$ for each $G_j$ (thus $\chi(G_j)\leq\chi(H)$). Then, we have: \begin{align*}
        \chi(G)&\leq \prod_{j=1}^m\chi(G_j)\\
        &\leq \prod_{j=1}^m\chi(H)\\
        &=\left(\chi(H)\right)^m.
    \end{align*} 

    Here is an alternative argument, which strongly relies on Remark~\ref{rem:def-ob}(5). Suppose that $m=\mathrm{C}(G;H)<\infty$; then there exists a homomorphism $G\to H^{(m)}$. Since $\chi( H^{(m)})\leq \left(\chi(H)\right)^m$, it follows that $\chi(G)\leq \left(\chi(H)\right)^{\mathrm{C}(G;H)}$.
\end{proof}

Theorem~\ref{thm:lower-bound} implies the following statement.

\begin{corollary}\label{cor:chro-lower-bound}
Let $m\geq 1$ be an integer. Let $G$ and $H$ be simple graphs such that $\chi(G)\geq 2$. If $\left(\chi(H)\right)^{m-1}+1\leq \chi(G)$,  then $\mathrm{C}(G;H)\geq m$. Thus, we have \[\max\{m:~\left(\chi(H)\right)^{m-1}+1\leq \chi(G)\}\leq \mathrm{C}(G;H).\]  
\end{corollary}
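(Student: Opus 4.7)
The plan is to derive this as an immediate consequence of Theorem~\ref{thm:lower-bound}, essentially by taking its contrapositive. Suppose, toward contradiction, that $\mathrm{C}(G;H)\leq m-1$. In particular $\mathrm{C}(G;H)$ is finite, so Theorem~\ref{thm:lower-bound} yields the inequality $\chi(G)\leq \left(\chi(H)\right)^{\mathrm{C}(G;H)}$. Since the exponential function $t\mapsto \left(\chi(H)\right)^{t}$ is nondecreasing in $t$ (for $\chi(H)\geq 1$), combining with $\mathrm{C}(G;H)\leq m-1$ would give $\chi(G)\leq \left(\chi(H)\right)^{m-1}$, which contradicts the hypothesis $\left(\chi(H)\right)^{m-1}+1\leq \chi(G)$.

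First I would dispatch the degenerate cases. If $\chi(H)=0$, the hypothesis forces $\chi(G)\geq 1$, which is harmless, but Theorem~\ref{thm:lower-bound} already forces $\mathrm{C}(G;H)=\infty$ whenever $\chi(G)\geq 2$, so the conclusion is automatic. A similar remark handles $\chi(H)=1$: the hypothesis $\left(\chi(H)\right)^{m-1}+1\leq \chi(G)$ becomes $2\leq \chi(G)$, and once again Theorem~\ref{thm:lower-bound} forces $\mathrm{C}(G;H)=\infty\geq m$. Thus from here on I may assume $\chi(H)\geq 2$, so that the exponential is strictly increasing.

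Next I would carry out the main step cleanly: assuming $\mathrm{C}(G;H)\leq m-1$, Theorem~\ref{thm:lower-bound} gives
\[
\chi(G)\;\leq\;\left(\chi(H)\right)^{\mathrm{C}(G;H)}\;\leq\;\left(\chi(H)\right)^{m-1},
\]
whereas the standing hypothesis yields $\left(\chi(H)\right)^{m-1}<\chi(G)$, a contradiction. Hence $\mathrm{C}(G;H)\geq m$, which is the first assertion. The second assertion is then a purely formal restatement: taking the supremum of the admissible $m$'s still gives a value bounded above by $\mathrm{C}(G;H)$, because the inequality $\mathrm{C}(G;H)\geq m$ holds for every $m$ in the set $\{m:\left(\chi(H)\right)^{m-1}+1\leq \chi(G)\}$.

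There is no real obstacle here; the proof is essentially a one-line contrapositive of Theorem~\ref{thm:lower-bound}. The only minor care required is to isolate the trivial cases $\chi(H)\in\{0,1\}$ and $\mathrm{C}(G;H)=\infty$ before passing to the strictly increasing exponential, and to note that the assumption $\chi(G)\geq 2$ is what makes the set of valid $m$'s nonempty (taking $m=1$ gives the trivial condition $2\leq \chi(G)$).
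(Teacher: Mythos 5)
Your proof is correct and follows essentially the same route as the paper: assume $\mathrm{C}(G;H)\leq m-1$, apply Theorem~\ref{thm:lower-bound} to get $\chi(G)\leq\left(\chi(H)\right)^{m-1}$, and contradict the hypothesis. The extra attention you give to the degenerate cases $\chi(H)\in\{0,1\}$ is a harmless refinement that the paper leaves implicit.
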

\begin{proof}
    Suppose that $\mathrm{C}(G;H)\leq m-1$ (with $m\geq 2$). By Theorem~\ref{thm:lower-bound}, we have \begin{align*}
    \chi(G)&\leq \left(\chi(H)\right)^{\mathrm{C}(G;H)}\\
        &\leq\left(\chi(H)\right)^{m-1}.
    \end{align*} This leads to a contradiction, as $\left(\chi(H)\right)^{m-1}+1\leq \chi(G)$.
\end{proof}


\subsection{Upper bound} Note that $\mathrm{C}(K_{m_1,\ldots,m_k};K_k)=1$, since there exists a  homomorphism $f:K_{m_1,\ldots,m_k}\to K_k$ defined by $f(v)=i$ whenever $v\in V_i$. 

\medskip The following result provides an upper bound for $\mathrm{C}(K_{j};K_i)$.

\begin{proposition}\label{prop:upper-bound}
  Let $m\geq 1$ be an integer. The inequality $\mathrm{C}(K_{j};K_i)\leq m$ holds for any $j\leq i^{m}$.    
\end{proposition}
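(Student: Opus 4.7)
The plan is to exhibit a decomposition of $K_j$ into $m$ spanning subgraphs, each admitting a homomorphism into $K_i$. Since $j\leq i^m$, the key observation is that we can injectively label the vertices of $K_j$ by distinct elements of the set $\{1,\ldots,i\}^{m}$, say $V(K_j)=\{v_1,\ldots,v_j\}$ with $v_s=(v_s^{(1)},\ldots,v_s^{(m)})\in\{1,\ldots,i\}^{m}$. This coordinate structure is the engine of the argument.

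Next, for each coordinate index $\ell\in\{1,\ldots,m\}$, I would define a spanning subgraph $G_\ell$ of $K_j$ by
\[
V(G_\ell)=V(K_j),\qquad E(G_\ell)=\{\,uv\in E(K_j)\ :\ u^{(\ell)}\neq v^{(\ell)}\,\}.
\]
I would then check the two required properties. First, $K_j=G_1\cup\cdots\cup G_m$: given any edge $uv\in E(K_j)$, the vertices $u$ and $v$ are distinct $m$-tuples, so they must differ in at least one coordinate $\ell$, placing $uv\in E(G_\ell)$. Second, each $G_\ell$ admits a homomorphism to $K_i$: the $\ell$-th coordinate projection $\pi_\ell:V(G_\ell)\to\{1,\ldots,i\}=V(K_i)$ defined by $\pi_\ell(v)=v^{(\ell)}$ satisfies $\pi_\ell(u)\neq\pi_\ell(v)$ whenever $uv\in E(G_\ell)$, and since $K_i$ is complete on $i$ vertices, this means $\pi_\ell(u)\pi_\ell(v)\in E(K_i)$.

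Putting these together, the collection $\{\pi_\ell:G_\ell\to K_i\}_{\ell=1}^{m}$ is a quasi-homomorphism from $K_j$ to $K_i$, yielding $\mathrm{C}(K_j;K_i)\leq m$ directly from Definition~\ref{defn:complexity}(1). The argument is essentially combinatorial labeling, so I do not anticipate any technical obstacle; the only point requiring a brief justification is that the labeling is indeed possible, which is immediate from the cardinality bound $j\leq i^{m}=|\{1,\ldots,i\}^{m}|$. Note also that this construction is a concrete converse companion to the lower bound of Theorem~\ref{thm:lower-bound} applied to $G=K_j$ and $H=K_i$, since it realizes the inequality $\chi(K_j)=j\leq i^{m}=(\chi(K_i))^{m}$ by an explicit covering.
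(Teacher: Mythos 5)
Your proof is correct, but it takes a genuinely different route from the paper's. The paper proceeds by induction on $m$: writing $j=(\ell-1)i+r$, it peels $K_j$ apart as the union of a disjoint union of smaller cliques $K_\ell$ (or $K_{\ell-1}$) with one complete $i$-partite graph, then invokes sub-additivity (Theorem~\ref{thm:category-union}) and Proposition~\ref{prop:complexity-disjoint-union} together with the induction hypothesis. You instead give a one-shot construction: label the vertices of $K_j$ injectively by tuples in $\{1,\ldots,i\}^m$ and let $G_\ell$ collect the edges whose endpoints differ in coordinate $\ell$; the coordinate projections are then the required homomorphisms. This is essentially the observation of Remark~\ref{rem:def-ob}(5) specialized to $H=K_i$, where $K_i^{(m)}=K_{i^m}$, so your argument amounts to the inclusion $K_j\hookrightarrow K_{i^m}$. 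Your construction is shorter and, as you note, sits in clean duality with the lower bound of Theorem~\ref{thm:lower-bound}; moreover each of your $G_\ell$ is itself a complete multipartite (indeed $i$-partite, when every fiber of $\pi_\ell$ is nonempty) spanning subgraph, which would even give a direct path toward Theorem~\ref{prop:covering-l-partite}(1). What the paper's recursive decomposition buys in exchange is an explicit layered construction that is reused downstream: the proof of Theorem~\ref{prop:covering-l-partite} and the worked design of optimal quasi-homomorphisms in Section~\ref{sec:applica} both lean on that specific peeling of $K_j$ into $\bigl(\bigsqcup K_\ell\bigr)\cup K_{\ell,\ldots,\ell}$-type pieces.
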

\begin{proof}
We will apply induction on $m\geq 1$. For $m=1$, the statement holds immediately. Suppose the statement is valid for $k$, with $1\leq k\leq m$. We will check that it holds for $m+1$; that is, for $j\leq i^{m+1}$, we will verify that the inequality $\mathrm{C}(K_{j};K_i)\leq m+1$ is fulfilled. 

If $j\leq i$, then $\mathrm{C}(K_{j};K_i)=1\leq m+1$. Now, assume $j> i$. Note that \[K_{j}=\begin{cases}
        \left(\bigsqcup_{i \text{ times}}^{}K_\ell\right)\cup K_{\overset{\ell}{i}},& \hbox{ if $j=\ell i$;}\\  & \\
        \left(\bigsqcup_{r \text{ times}}^{}K_\ell\sqcup \bigsqcup_{i-r \text{ times}}^{}K_{\ell-1}\right)\cup K_{\overset{\ell}{r},\overset{\ell-1}{i-r}},& \hbox{ if $j=(\ell-1)i+r$ with $1\leq r<i$;}\\
    \end{cases}\] where $K_{\overset{\ell}{i}}:=K_{\underbrace{\ell,\ldots,\ell}_{i \text{ times}}}$ and $K_{\overset{\ell}{r},\overset{\ell-1}{i-r}}:=K_{\underbrace{\ell,\ldots,\ell}_{r \text{ times}},\underbrace{\ell-1,\ldots,\ell-1}_{i-r \text{ times}}}$. Since $j>i$, we have $\ell\geq 2$. From Theorem~\ref{thm:category-union}(1) together with Proposition~\ref{prop:complexity-disjoint-union}, we have that  \[\mathrm{C}(K_j;K_i)\leq \mathrm{C}(K_\ell;K_i)+1.\] Note that $j\leq i^{m+1}$ implies that $\ell\leq i^{m}$. Then, by the induction  hypothesis, $\mathrm{C}(K_\ell;K_i)\leq m$. Therefore, $\mathrm{C}(K_j;K_i)\leq m+1$ for any $j\leq i^{m+1}$.    
\end{proof}

\begin{remark}\label{rem:grotzch}
    Note that the statement $\mathrm{C}(K_{j};H)\leq m$ for any graph $H$ with $j\leq\left(\chi(H)\right)^m$ is not necessarily true. For example, consider $H=M_4$ as the Gr\"{o}tzch graph, which has 11 vertices and 20 edges, and is the smallest triangle-free graph that cannot be 3-colored. In this case, we have $j=3$, $\chi(M_4)=4$, and $\mathrm{C}(K_{3};M_4)=\mathrm{IC}(K_{3};M_4)=2$.
   \end{remark} 

Proposition~\ref{prop:upper-bound} implies an upper bound for $\mathrm{C}(G;H)$. Given a graph $H$, let $\omega(H)=\max\{j:~K_j\text{ is a subgraph of } H\}$, which is called the \textit{clique number} of $H$. We will consider $\omega(H)<\infty$. Note that $\omega(H)\leq \chi(H)$ for any simple graph $H$.   

\begin{theorem}[Upper Bound]\label{thm:general-upper-bound}
 Let $n\geq 1$ be an integer, and let $G$ be a simple graph and $H$ be any graph. If $\chi(G)\leq \left(\omega(H)\right)^n$, then $\mathrm{C}(G;H)\leq n$. Thus, we have \[\mathrm{C}(G;H)\leq\min\{n:~\chi(G)\leq \left(\omega(H)\right)^n\}.\] 
 \end{theorem}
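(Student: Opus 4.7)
The plan is to reduce the statement to the already-proven special case for complete graphs, namely Proposition~\ref{prop:upper-bound}, by applying the monotonicity result Theorem~\ref{prop:complexity-subgraphs}(1) twice. The key observation is that the hypothesis $\chi(G)\leq (\omega(H))^{n}$ naturally splits into two independent pieces of information: a homomorphism from $G$ into a complete graph (coming from the chromatic number) and a homomorphism from a complete graph into $H$ (coming from the clique number).

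More precisely, I would first record that there exist canonical homomorphisms
\[
G\longrightarrow K_{\chi(G)}\qquad \text{and}\qquad K_{\omega(H)}\hookrightarrow H.
\]
The first is any proper $\chi(G)$-colouring of $G$ viewed as a map into $K_{\chi(G)}$, and the second is the inclusion homomorphism, which exists precisely because $\omega(H)$ is the size of a largest clique in $H$. Applying Theorem~\ref{prop:complexity-subgraphs}(1) to the inclusion $K_{\omega(H)}\hookrightarrow H$ (as a homomorphism $H'\to H$) gives the inequality
\[
\mathrm{C}(G;H)\;\leq\;\mathrm{C}\bigl(G;K_{\omega(H)}\bigr),
\]
while applying the same theorem to the homomorphism $G\to K_{\chi(G)}$ (as a homomorphism $G'\to G$, with $G'=G$ and the target graph renamed) yields
\[
\mathrm{C}\bigl(G;K_{\omega(H)}\bigr)\;\leq\;\mathrm{C}\bigl(K_{\chi(G)};K_{\omega(H)}\bigr).
\]

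Now I would invoke Proposition~\ref{prop:upper-bound} with $i=\omega(H)$, $j=\chi(G)$, and $m=n$. The hypothesis $\chi(G)\leq (\omega(H))^{n}$ is exactly the assumption $j\leq i^{m}$ required by that proposition, so it delivers
\[
\mathrm{C}\bigl(K_{\chi(G)};K_{\omega(H)}\bigr)\;\leq\;n.
\]
Chaining the three inequalities gives $\mathrm{C}(G;H)\leq n$, and taking the minimum over all admissible $n$ yields the displayed bound.

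There is essentially no hard step: the whole argument is a two-line chain of monotonicity plus the clique-to-clique special case. If anything, the subtle point to double-check is that Theorem~\ref{prop:complexity-subgraphs}(1) is being applied in the correct direction on each side (covariant on the source, contravariant on the target), and that Proposition~\ref{prop:upper-bound} indeed applies for \emph{any} $j\leq i^{m}$ rather than only for $j=i^{m}$, so that the bound remains valid even when $\chi(G)$ is strictly less than $(\omega(H))^{n}$.
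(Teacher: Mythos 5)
Your proposal is correct and follows essentially the same route as the paper's proof: both reduce to $\mathrm{C}\bigl(K_{\chi(G)};K_{\omega(H)}\bigr)$ via the two monotonicity inequalities of Theorem~\ref{prop:complexity-subgraphs}(1) and then invoke Proposition~\ref{prop:upper-bound}. The only cosmetic difference is the order of the two reductions (you replace the target $H$ by $K_{\omega(H)}$ first, the paper replaces the source $G$ by $K_{\chi(G)}$ first), which changes nothing.
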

\begin{proof}
    Set $G\to K_j$, where $j=\chi(G)$. Also, let $i=\omega(H)$, so that $K_i$ is a subgraph of $H$. From Theorem~\ref{prop:complexity-subgraphs}(1), we have \begin{align*}
      \mathrm{C}(G;H) &\leq \mathrm{C}(K_j;H)\\
      &\leq \mathrm{C}(K_j;K_i).
    \end{align*} Since $\mathrm{C}(K_j;K_i)\leq n$ (see Proposition~\ref{prop:upper-bound}), it follows that $\mathrm{C}(G;H)\leq n$.     
\end{proof}

Corollary~\ref{cor:chro-lower-bound} together with Theorem~\ref{thm:general-upper-bound} implies the following statement. 

\begin{proposition}\label{cor:chro-alpha-complexity}
Let $G$ and $H$ be simple graphs such that $\chi(G),\omega(H)\geq 2$. Then \[\max\{m:~\left(\chi(H)\right)^{m-1}+1\leq \chi(G)\}\leq\mathrm{C}(G;H)\leq \min\{n:~\chi(G)\leq \left(\omega(H)\right)^n\}.\]  
\end{proposition}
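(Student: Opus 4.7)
The plan is to derive this two-sided estimate by stitching together the lower bound from Corollary~\ref{cor:chro-lower-bound} and the upper bound from Theorem~\ref{thm:general-upper-bound}, so the proof reduces to verifying that the two extremal sets appearing in the statement are well-defined under the hypotheses $\chi(G), \omega(H) \geq 2$.

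For the lower bound, I would argue as follows. Let $S = \{m \in \mathbb{Z}_{\geq 1} : (\chi(H))^{m-1} + 1 \leq \chi(G)\}$. Since $\chi(G) \geq 2$, the value $m = 1$ yields $(\chi(H))^{0} + 1 = 2 \leq \chi(G)$, so $S$ is nonempty. Moreover $S$ is bounded above: since $\omega(H) \geq 2$ we have $\chi(H) \geq \omega(H) \geq 2$, and the exponentials $(\chi(H))^{m-1}$ grow without bound as $m \to \infty$, so $S$ is finite and $\max S$ exists. For every $m \in S$, Corollary~\ref{cor:chro-lower-bound} gives $\mathrm{C}(G;H) \geq m$, and taking the maximum over $m \in S$ yields the left-hand inequality.

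For the upper bound, set $T = \{n \in \mathbb{Z}_{\geq 1} : \chi(G) \leq (\omega(H))^n\}$. Because $\omega(H) \geq 2$, the sequence $(\omega(H))^n$ is strictly increasing and unbounded, so $T$ is a nonempty subset of the positive integers and therefore has a minimum. For any $n \in T$, Theorem~\ref{thm:general-upper-bound} gives $\mathrm{C}(G;H) \leq n$; applying this to $n = \min T$ yields the right-hand inequality.

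Combining the two bounds produces the claimed chain. I do not expect any real obstacle here: both inequalities are immediate applications of previously established results, and the only subtlety is ensuring the two sets are nonempty and bounded in the appropriate direction, which is precisely what the mild hypotheses $\chi(G) \geq 2$ and $\omega(H) \geq 2$ guarantee.
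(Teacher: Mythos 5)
Your proof is correct and matches the paper's approach exactly: the paper derives this proposition by combining Corollary~\ref{cor:chro-lower-bound} with Theorem~\ref{thm:general-upper-bound}, which is precisely what you do. Your additional verification that the two extremal sets are nonempty and bounded (using $\chi(G)\geq 2$ and $\chi(H)\geq\omega(H)\geq 2$) is a sound, if implicit in the paper, piece of bookkeeping.
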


Note that $\chi(H)=2$ implies that $\omega(H)=2$. Hence, Proposition~\ref{cor:chro-alpha-complexity} implies the following example.

\begin{example}\label{exam:codomain-chromatic-2}
Let $m\geq 1$ be an integer. Let $G$ and $H$ be simple graphs such that $\chi(G)\geq 2$ and $\chi(H)=2$. If $2^{m-1}+1\leq \chi(G)\leq 2^m$, then $\mathrm{C}(G;H)= m$. Thus, we have \[\mathrm{C}(G;H)=\min\{k:~\chi(G)\leq 2^k\}=\lceil \log_2\chi(G)\rceil.\] Of course, the equalities \[\mathrm{C}(G;H)=\max\{m:~2^{m-1}+1\leq \chi(G)\}=\lfloor \log_{2}\left(2\left(\chi(G)-1\right)\right)\rfloor\] also hold.
\end{example}

Furthermore, we have the following example.

\begin{example}\label{exam:chi-chi-2}
    Let $G, H$, and $K$ be simple graphs such that $\chi(H)=\chi(K)=2$. By Example~\ref{exam:codomain-chromatic-2}, we have $\mathrm{C}(H;K)=1$ and \[\mathrm{C}(G;K)= \mathrm{C}(G;H)=\min\{k:~\chi(G)\leq 2^k\}=\lceil\log_2\chi(G)\rceil.\]
\end{example}

Moreover, by Proposition~\ref{cor:chro-alpha-complexity}, we have the following formula.

\begin{corollary}\label{cor:omega-equal-chi}
Let $G, H$ be simple graphs such that $\chi(G)\geq 2$ and $\omega(H)=\chi(H)\geq 2$. Then,  \[\mathrm{C}(G;H)=\min\{k:~\chi(G)\leq \left(\chi(H)\right)^k\}=\lceil\log_{\chi(H)}\chi(G) \rceil.\] Of course, the equalities \[\mathrm{C}(G;H)=\max\{m:~\left(\chi(H)\right)^{m-1}+1\leq \chi(G)\}=\lfloor \log_{\chi(H)}\left(\chi(H)\left(\chi(G)-1\right)\right)\rfloor\] also hold.
\end{corollary}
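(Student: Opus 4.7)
The plan is to derive the result directly from Proposition~\ref{cor:chro-alpha-complexity} together with an elementary manipulation of floor/ceiling identities, using the hypothesis $\omega(H)=\chi(H)$ to collapse the two bounds into a single value.

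First, I would set $q:=\chi(H)\geq 2$ and $n:=\chi(G)\geq 2$ to streamline notation. Proposition~\ref{cor:chro-alpha-complexity} yields
\[
\max\{m:\; q^{m-1}+1\leq n\}\;\leq\;\mathrm{C}(G;H)\;\leq\;\min\{k:\; n\leq q^k\},
\]
where I have already substituted $\omega(H)=\chi(H)=q$ into the upper bound. So the corollary reduces to showing that these two extremes are equal and both coincide with $\lceil\log_q n\rceil$.

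Next, I would verify the two numerical identities separately. For the upper bound, the inequality $n\leq q^k$ is equivalent to $k\geq\log_q n$, so $\min\{k:\; n\leq q^k\}=\lceil\log_q n\rceil$ since $k$ ranges over positive integers and $n\geq 2$ forces the minimum to be at least $1$. For the lower bound, the condition $q^{m-1}+1\leq n$ is equivalent to $q^{m-1}\leq n-1$, i.e.\ $m\leq 1+\log_q(n-1)$, so
\[
\max\{m:\; q^{m-1}+1\leq n\}\;=\;1+\lfloor\log_q(n-1)\rfloor.
\]
The remaining step is to confirm the arithmetic identity $\lceil\log_q n\rceil=1+\lfloor\log_q(n-1)\rfloor$ for integers $q,n\geq 2$, which I would prove by writing $n-1=q^{s}\cdot t$ with $s=\lfloor\log_q(n-1)\rfloor$ and $1\leq t<q$, and then checking the two cases $n=q^s+1$ (where both sides equal $s+1$) and $q^s+1<n\leq q^{s+1}$ (where both sides also equal $s+1$). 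Consequently both bounds in Proposition~\ref{cor:chro-alpha-complexity} equal $\lceil\log_q n\rceil$, forcing $\mathrm{C}(G;H)=\lceil\log_{\chi(H)}\chi(G)\rceil$.

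Finally, the equivalent expression $\lfloor\log_{\chi(H)}(\chi(H)(\chi(G)-1))\rfloor$ follows from the chain
\[
\lfloor\log_q(q(n-1))\rfloor=\lfloor 1+\log_q(n-1)\rfloor=1+\lfloor\log_q(n-1)\rfloor,
\]
which coincides with the $\max\{m:\,q^{m-1}+1\leq n\}$ already computed above, and hence with $\lceil\log_q n\rceil$. The main (and really the only) obstacle is the careful book-keeping of the floor/ceiling arithmetic in the boundary case $n=q^s+1$; once that identity is established, the corollary is immediate from Proposition~\ref{cor:chro-alpha-complexity}.
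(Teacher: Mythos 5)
Your proposal is correct and follows the same route as the paper, which derives the corollary directly from Proposition~\ref{cor:chro-alpha-complexity} by noting that the hypothesis $\omega(H)=\chi(H)$ makes the lower and upper bounds coincide. The floor/ceiling identity $\lceil\log_q n\rceil = 1+\lfloor\log_q(n-1)\rfloor$ for integers $q,n\geq 2$, which you verify explicitly, is exactly the arithmetic the paper leaves implicit.
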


Observe that we cannot relax the condition $\omega(H)=\chi(H)$. For instance, consider $H=M_4$ as the Gr\"{o}tzch graph (see Remark~\ref{rem:grotzch}). In this case, $\omega(M_4)=2$, $\chi(M_4)=4$, $\lceil\log_{4}3 \rceil=1$ and $\mathrm{C}(K_3;M_4)=2$. 

\medskip Corollary~\ref{cor:omega-equal-chi} implies the following example.

\begin{example}\label{exem:complexity-kj-k2}
 For any simple graph $G$ such that $\chi(G)\geq 2$ and any $n\geq 2$, we have: \[\mathrm{C}(G;K_n)=\min\{k:~\chi(G)\leq n^{k}\}=\lceil\log_{n}\chi(G)\rceil.\] Note that $\omega(K_n)=\chi(K_n)=n$. For instance: \[\mathrm{C}(K_{j};K_i)=\min\{k:~j\leq i^{k}\}=\lceil\log_{i}j\rceil\] for any $j,i\geq 2$. Furthermore, the equalities \[\mathrm{C}(K_n;G)=\min\{k:~n\leq \left(\chi(G)\right)^{k}\}=\lceil\log_{\chi(G)}n\rceil\] hold whenever $\omega(G)=\chi(G)$.
  \end{example}

\medskip From Example~\ref{exem:complexity-kj-k2}, we know that $\mathrm{C}(G;K_\ell)=k$ whenever $\ell^{k-1}<\chi(G)\leq \ell^{k}$. Furthermore, by the Proof of Theorem~\ref{thm:general-upper-bound}, we have an optimal quasi-homomorphism $\{f_i:G_i\to K_\ell\}_{i=1}^{k}$ from $G$ to $K_\ell$, where each $\hat{G}_i$ is $K_\ell$-colourable. The following statement (which will be fundamental to Theorem~\ref{prop:biparticity-complexity-k2}(1)) shows that we can have an optimal quasi-homomorphism $\{\hat{f}_i:\hat{G}_i\to K_\ell\}_{i=1}^{k}$ from $G$ to $K_\ell$, where each $\hat{G}_i$ is a $\ell$-partite subgraph of $G$. This is nontrivial, since being \aspas{$K_\ell$-colourable} is not the same as being \aspas{$\ell$-partite}, see  Definition~\ref{defn:l-partite}(1). 

\begin{theorem}\label{prop:covering-l-partite}
Let $\ell\geq 2$. 
\begin{enumerate}
\item[(1)] Let $k\geq 2$. If $\ell^{k-1}\leq j\leq \ell^k$, then there exist subgraphs $G_1,\ldots,G_{k}$ of $K_{j}$ such that \[K_{j}=G_1\cup\cdots\cup G_{k}\] and each $G_i$ is $\ell$-partite (i.e., there exists a vertex-surjective homomorphism $f_i:G_i\to K_\ell$). Hence, for $\ell^{k-1}< j\leq \ell^k$, there exists an optimal quasi-homomorphism $\{f_i:G_i\to K_\ell\}_{i=1}^{k}$ from $K_{j}$ to $K_\ell$ where each $f_i:G_i\to K_\ell$ is a vertex-surjective homomorphism.
    \item[(2)] Let $G$ be a simple graph and $k\geq 2$. If $\ell^{k-1}<\chi(G)\leq \ell^k$, then there exists an optimal quasi-homomorphism $\{\hat{f}_i:\hat{G}_i\to K_\ell\}_{i=1}^{k}$ from $G$ to $K_\ell$ where each $\hat{f}_i:\hat{G}_i\to K_\ell$ is a vertex-surjective homomorphism (i.e., each $\hat{G}_i$ is a $\ell$-partite subgraph of $G$). 
\end{enumerate}
\end{theorem}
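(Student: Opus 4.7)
The plan is to reduce both claims to an explicit coordinate-wise labeling of the vertices of $K_j$ by $k$-tuples in $\{1,\ldots,\ell\}^k$, and then to deduce part (2) from part (1) by pulling this decomposition back along a proper $\chi(G)$-coloring of $G$.

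For part (1), I would construct an injective labeling $\phi:V(K_j)\to\{1,\ldots,\ell\}^k$ such that the $i$-th coordinate projection $\pi_i\circ\phi$ is surjective onto $\{1,\ldots,\ell\}$ for every $i\in\{1,\ldots,k\}$. Such a $\phi$ exists: first assign the $\ell$ diagonal tuples $(t,t,\ldots,t)$ to some $\ell$ vertices (this uses $\ell\leq j$, which holds because $k\geq 2$ and $j\geq\ell^{k-1}\geq\ell$), and then assign the remaining $j-\ell$ vertices to distinct non-diagonal tuples (possible since $j-\ell\leq\ell^k-\ell$). Next, let $G_i$ be the spanning subgraph of $K_j$ whose edges are the pairs $uv$ with $(\pi_i\circ\phi)(u)\neq(\pi_i\circ\phi)(v)$, and set $f_i:=\pi_i\circ\phi:G_i\to K_\ell$. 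By construction $f_i$ is a homomorphism, and per-coordinate surjectivity of $\phi$ makes $f_i$ vertex-surjective, so $G_i$ is $\ell$-partite; injectivity of $\phi$ guarantees that every edge of $K_j$ lies in some $G_i$, so $K_j=G_1\cup\cdots\cup G_k$. For $\ell^{k-1}<j\leq\ell^k$, Example~\ref{exem:complexity-kj-k2} gives $\mathrm{C}(K_j;K_\ell)=\lceil\log_\ell j\rceil=k$, so the quasi-homomorphism $\{f_i:G_i\to K_\ell\}_{i=1}^k$ is optimal.

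For part (2), set $j=\chi(G)$ and fix a proper $\chi(G)$-coloring, i.e.\ a vertex-surjective homomorphism $\varphi:G\to K_j$. Apply part (1) to obtain subgraphs $G_1,\ldots,G_k$ of $K_j$ with vertex-surjective homomorphisms $f_i:G_i\to K_\ell$. Define the image-inverse $\hat G_i:=\varphi^{-1}(G_i)$ (so $V(\hat G_i)=V(G)$ and $uv\in E(\hat G_i)$ iff $uv\in E(G)$ and $\varphi(u)\varphi(v)\in E(G_i)$), and let $\hat f_i:=f_i\circ\varphi|_{\hat G_i}:\hat G_i\to K_\ell$. Every edge $uv\in E(G)$ satisfies $\varphi(u)\varphi(v)\in E(K_j)=\bigcup_i E(G_i)$, so it lies in some $\hat G_i$, giving $G=\hat G_1\cup\cdots\cup\hat G_k$. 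Vertex-surjectivity of each $\hat f_i$ follows from $\hat f_i(V(\hat G_i))=f_i(\varphi(V(G)))=f_i(V(K_j))=V(K_\ell)$, using surjectivity of $\varphi$ and of $f_i$. Optimality is again Example~\ref{exem:complexity-kj-k2}: under $\ell^{k-1}<\chi(G)\leq\ell^k$ one has $\mathrm{C}(G;K_\ell)=\lceil\log_\ell\chi(G)\rceil=k$.

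The main subtlety is arranging the labeling in part (1) so that injectivity and per-coordinate surjectivity hold simultaneously; without the latter, each $G_i$ would only be $K_\ell$-colourable rather than genuinely $\ell$-partite, which is precisely the refinement the theorem asks for (cf.\ Definition~\ref{defn:l-partite}(1)). Once the labeling is fixed, the remaining verifications (that the $G_i$ cover $K_j$, that the $\hat G_i$ cover $G$, and that $\hat f_i$ remains vertex-surjective) are routine consequences of these two properties combined with the vertex-surjectivity of the chromatic coloring $\varphi$.
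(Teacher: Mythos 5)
Your proof is correct, but part (1) follows a genuinely different route from the paper's. The paper proves (1) by induction on $k$, at each step splitting $K_j$ into a disjoint union of smaller complete graphs together with one complete $\ell$-partite graph $K_{m,\ldots,m,m-1,\ldots,m-1}$ (mirroring the recursion in Proposition~\ref{prop:upper-bound}); the pieces it produces are disjoint unions of cliques plus a complete multipartite graph, which is structurally informative (e.g.\ in connection with the $\ell$-partite dimension). You instead give a one-shot construction: encode the $j$ vertices injectively as $k$-tuples over $\{1,\ldots,\ell\}$ (essentially base-$\ell$ expansion), let $G_i$ collect the edges whose endpoints differ in coordinate $i$, and force vertex-surjectivity of each coordinate projection by reserving the $\ell$ diagonal tuples --- which is exactly where $j\geq\ell^{k-1}\geq\ell$ (i.e.\ $k\geq 2$) is used. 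This avoids induction entirely, isolates the only delicate point (simultaneous injectivity and per-coordinate surjectivity, without which the $G_i$ would merely be $K_\ell$-colourable rather than $\ell$-partite), and is in the spirit of the classical Harary--Hsu--Miller encoding argument for biparticity; the trade-off is that your $G_i$ are less structured than the paper's. Your part (2) --- pulling the decomposition back along a vertex-surjective $\chi(G)$-colouring via the image-inverse construction and composing restriction homomorphisms --- is essentially identical to the paper's, including the appeal to Example~\ref{exem:complexity-kj-k2} for optimality, and that appeal is not circular since the computation of $\mathrm{C}(G;K_\ell)$ does not depend on this theorem.
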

\begin{proof}
    \noindent\begin{enumerate}
        \item[(1)] We will apply induction on $k\geq 2$. For $k=2$. For the case $j=\ell$, consider $G_1=G_2=K_\ell$. Note that each $G_i$ is $\ell$-partite. Thus, we trivially obtain $\ell$-partite subgraphs $G_1$ and $G_2$ of $K_{\ell}$ such that $K_{\ell}=G_1\cup G_2$.  
        
        For $\ell<j\leq \ell^2$, note that \[K_{j}=\begin{cases}
          \left(K^1_{\ell}\sqcup\cdots\sqcup K^\ell_{\ell}\right)\cup K_{\underbrace{\ell,\ldots,\ell}_{\ell \text{ times}}},&\hbox{ if $j=\ell^2$;}\\ \left(\bigsqcup_{t=1}^rK^t_m\sqcup\bigsqcup_{s=1}^{\ell-r}K^s_{m-1}\right)\cup K_{\overset{m}{r},\overset{m-1}{\ell-r}},&\hbox{ if $j=(m-1)\ell+r$} 
        \end{cases} \] for some integers $m\geq 2$ (using $\ell<j$) and $1\leq r<\ell$. Here \[K_{\overset{m}{r},\overset{m-1}{\ell-r}}:=K_{\underbrace{m,\ldots,m}_{r \text{ times}},\underbrace{m-1,\ldots,m-1}_{\ell-r \text{ times}}},\] and each $K^a_{b}$ is a copy of $K_b$. 

        For the case $j=\ell^2$, consider $G_1=K^1_{\ell}\sqcup\cdots\sqcup K^\ell_{\ell}$ and $G_2=K_{\underbrace{\ell,\ldots,\ell}_{\ell \text{ times}}}$. Note that each $G_i$ is $\ell$-partite. Thus, we obtain $\ell$-partite subgraphs $G_1$ and $G_2$ of $K_{\ell^2}$ such that $K_{\ell^2}=G_1\cup G_2$.

       For the case $j=(m-1)\ell+r$. Since $\ell<j\leq \ell^2$, we have $1\leq m-1< m\leq \ell$. Consider $G_1=\bigsqcup_{t=1}^rK^t_m\sqcup\bigsqcup_{s=1}^{\ell-r}K^s_{m-1}$ and $G_2=K_{\overset{m}{r},\overset{m-1}{\ell-r}}$. Note that $G_1$ and $G_2$ are $\ell$-partite. Thus, in this case, we obtain $\ell$-partite subgraphs $G_1$ and $G_2$ of $K_{j}$ such that $K_{j}=G_1\cup G_2$.  

        Now, suppose that the result holds for $p$, with $2\leq p\leq k$. We will check that it holds for $k+1$; that is, for $\ell^{k}\leq j\leq \ell^{k+1}$, we will verify that there exist $\ell$-partite subgraphs $G_1,\ldots,G_{k+1}$ of $K_{j}$ such that $K_{j}=G_1\cup\cdots\cup G_{k+1}$. In fact, for the case $j=\ell^{k}$, by the induction hypothesis, there are $\ell$-partite subgraphs $G_1,\ldots,G_k$ of $K_{\ell^k}$ such that $K_{\ell^k}=G_1\cup\cdots\cup G_k$. Consider $G_{k+1}=G_k$. Hence, we have $\ell$-partite subgraphs $G_1,\ldots,G_k,G_{k+1}$ of $K_{\ell^k}$ such that $K_{\ell^k}=G_1\cup\cdots\cup G_k\cup G_{k+1}$.  
        
        For the case $j=\ell^{k+1}$, note that \[K_{\ell^{k+1}}=\left(K^1_{\ell^k}\sqcup\cdots\sqcup K^\ell_{\ell^k}\right)\cup K_{\underbrace{\ell^k,\ldots,\ell^k}_{\ell \text{ times}}},\] where each $K^i_{\ell^k}$ is a copy of $K_{\ell^k}$. By the induction  hypothesis, for each $i$, there are $\ell$-partite subgraphs $G^i_1,\ldots,G^i_k$ of $K^i_{\ell^k}$ such that $K^i_{\ell^k}=G^i_1\cup\cdots\cup G^i_k$. For each $\nu=1,\ldots,k$, consider \begin{align*}
            G_\nu&=G^1_\nu\sqcup\cdots\sqcup G^\ell_\nu,\\
        \end{align*} and $G_{k+1}=K_{\underbrace{\ell^k,\ldots,\ell^k}_{\ell \text{ times}}}$. Note that $G_1,\ldots,G_k,G_{k+1}$ are $\ell$-partite subgraphs of $K_{\ell^{k+1}}$. Furthermore, since $G_1\cup\cdots\cup G_k=K^1_{\ell^k}\sqcup\cdots\sqcup K^\ell_{\ell^k}$, we have that $K_{\ell^{k+1}}=G_1\cup\cdots\cup G_k\cup G_{k+1}$. 

        Therefore, we obtain $\ell$-partite subgraphs $G_1,\ldots,G_{k+1}$ of $K_{\ell^{k+1}}$ such that $K_{\ell^{k+1}}=G_1\cup\cdots\cup G_{k+1}$.

        For the case $\ell^{k}<j< \ell^{k+1}$, we have $j=(m-1)\ell+r$ for some integers $m\geq 2$ and $1\leq r<\ell$. Note that \[K_j=\left(\bigsqcup_{t=1}^rK^t_m\sqcup\bigsqcup_{s=1}^{\ell-r}K^s_{m-1}\right)\cup K_{\underbrace{m,\ldots,m}_{r \text{ times}},\underbrace{m-1,\ldots,m-1}_{\ell-r \text{ times}}},\] where each $K^t_m$ is a copy of $K_m$ and each $K^s_{m-1}$ is a copy of $K_{m-1}$. 

        Since $\ell^{k}<j< \ell^{k+1}$, we have $\ell^{k-1}\leq m-1< m\leq \ell^{k}$. By the induction hypothesis, for each $t$, there are $\ell$-partite subgraphs $G^t_1,\ldots,G^t_k$ of $K^t_{m}$ such that $K^t_{m}=G^t_1\cup\cdots\cup G^t_k$. Moreover, for each $s$, there are $\ell$-partite subgraphs $G^s_1,\ldots,G^s_k$ of $K^s_{m-1}$ such that $K^s_{m}=G^s_1\cup\cdots\cup G^s_k$. For each $\nu=1,\ldots,k$, consider \begin{align*}
            G_\nu&=\bigsqcup_{t=1}^rG^t_\nu\sqcup\bigsqcup_{s=1}^{\ell-r} G^s_\nu,
        \end{align*} and $G_{k+1}=K_{\underbrace{m,\ldots,m}_{r \text{ times}},\underbrace{m-1,\ldots,m-1}_{\ell-r \text{ times}}}$. Note that $G_1,\ldots,G_k,G_{k+1}$ are $\ell$-partite subgraphs of $K_{j}$. Furthermore, since $G_1\cup\cdots\cup G_k=\bigsqcup_{t=1}^rK^t_m\sqcup\bigsqcup_{s=1}^{\ell-r}K^s_{m-1}$, we have that $K_{j}=G_1\cup\cdots\cup G_k\cup G_{k+1}$. 

        Therefore, we obtain $\ell$-partite subgraphs $G_1,\ldots,G_{k+1}$ of $K_{j}$ such that $K_{j}=G_1\cup\cdots\cup G_{k+1}$.
       
        \item[(2)] Let $j:=\chi(G)$ and consider a (vertex-surjective) homomorphism $f:G\to K_j$. Since $\ell^{k-1}<j\leq \ell^k$, by Item (1), there exists an optimal quasi-homomorphism $\{f_i:G_i\to K_\ell\}_{i=1}^{k}$ from $K_j$ to $K_\ell$ where each $f_i:G_i\to K_\ell$ is a vertex-surjective homomorphism. 
        
        By the first part of the proof of Theorem~\ref{prop:complexity-subgraphs}, the homomorphism $f:G\to K_j$ induces a quasi-homomorphism $\{\hat{f}_i:\hat{G}_i\to K_\ell\}_{i=1}^{k}$ from $G$ to $K_\ell$ where each $\hat{G}_i=f^{-1}(G_i)$ and $\hat{f}_i=f_i \circ f_{|}$. Recall that $f_{|}:f^{-1}(G_i)\to G_i$ denotes the restriction homomorphism for each $i$. Since $f_{|}$ and $f_i$ are vertex-surjective homomorphisms, each $\hat{f}_i:\hat{G}_i\to K_\ell$ is a vertex-surjective homomorphism. Note that this quasi-homomorphism $\{\hat{f}_i:\hat{G}_i\to K_\ell\}_{i=1}^{k}$ is optimal because $\mathrm{C}(G;K_\ell)=k$.

        Therefore, we obtain an optimal quasi-homomorphism $\{\hat{f}_i:\hat{G}_i\to K_\ell\}_{i=1}^{k}$ from $G$ to $K_\ell$ where each $\hat{f}_i:\hat{G}_i\to K_\ell$ is a vertex-surjective homomorphism. 
    \end{enumerate}
\end{proof}

\section{Covering Number}\label{sec:cov-number} 

In this section, we discuss a connection between the (injective) hom-complexity and some well-known covering numbers. 

\medskip A family of subgraphs $\{G_\lambda\}_{\Lambda}$ of $G$ is called an \textit{edge covering} of $G$ if $E(G)=\bigcup_{\lambda\in\Lambda} E(G_\lambda)$. Let $\mathcal{S}$ be a family of subgraphs of $G$. The \textit{$\mathcal{S}$-covering number} of $G$, denoted by $\sigma_{\mathcal{S}}(G)$, is the least positive integer $k$ such that there exists an edge covering with $k$ elements $\{G_1,\ldots,G_k\}$ of $G$ such that each $G_i\in\mathcal{S}$. We set $\sigma_{\mathcal{S}}(G)=\infty$ if no such integer $k$ exists (cf.  \cite{schwartz2022}). For example, by definition, $\sigma_{\varnothing}(G)=\infty$.

\medskip Recall that $G=G_1\cup\cdots\cup G_k$ means $V(G)=V(G_1)\cup\cdots\cup V(G_k)$ and $E(G)=E(G_1)\cup\cdots\cup E(G_k)$. In particular, such a family $\{G_1,\ldots,G_k\}$ is an edge covering of $G$. The converse holds in the case that $G$ has not isolated vertices, as shown in the following remark.

\begin{remark}\label{rem:edge-cov-without-isolated-v-cov}
 Let $G_1,\ldots, G_k$ be subgraphs of $G$ such that $E(G)=E(G_1)\cup\cdots\cup E(G_k)$. If $G$ does not contain isolated vertices, then $V(G)=V(G_1)\cup\cdots\cup V(G_k)$. Indeed, let $v\in V(G)$. Since $G$ has not isolated vertices, there exists $u\in V(G)$ such that $uv\in E(G)$, and then $uv\in E(G_i)$ for some $i$. Thus, $v\in V(G_i)$ for some $i$. Therefore, $V(G)=V(G_1)\cup\cdots\cup V(G_k)$.    
\end{remark}

Hence, we have the following statement, which says that the hom-complexity coincides with a covering number.

\begin{proposition}[Hom-complexity is a covering number]\label{prop:hom-cov-number}
  Let $G$ and $H$ be graphs. Let $\mathcal{H}$ be the family of all subgraphs of $G$ which are $H$-colourable. We have \[\mathrm{C}(G;H)= \sigma_{\mathcal{H}}(G).\]  
\end{proposition}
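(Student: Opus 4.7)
The plan is to verify both inequalities $\sigma_{\mathcal{H}}(G) \leq \mathrm{C}(G;H)$ and $\mathrm{C}(G;H) \leq \sigma_{\mathcal{H}}(G)$, after noting that the proposition holds trivially when either side equals $\infty$ (the obstructions are the same). So we may assume both quantities are finite.

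For $\sigma_{\mathcal{H}}(G) \leq \mathrm{C}(G;H)$, I would simply unpack definitions. Any family $\{G_1, \ldots, G_k\}$ witnessing $\mathrm{C}(G;H) = k$ satisfies $G = G_1 \cup \cdots \cup G_k$ (in the sense of Definition~\ref{defn:union-graphs}), which in particular forces $E(G) = E(G_1) \cup \cdots \cup E(G_k)$; moreover each $G_i$ is $H$-colourable, i.e., $G_i \in \mathcal{H}$. Thus this family is already an $\mathcal{H}$-edge-covering of $G$ with $k$ elements, and $\sigma_{\mathcal{H}}(G) \leq k$.

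For the reverse inequality, the natural approach is to take an optimal $\mathcal{H}$-edge-covering $\{G_1, \ldots, G_k\}$ with $k = \sigma_{\mathcal{H}}(G)$ and upgrade it to a decomposition in the sense required by Definition~\ref{defn:complexity}. The subtle point, which is the main obstacle, is that an edge covering only guarantees $E(G) = E(G_1) \cup \cdots \cup E(G_k)$, and the vertex union $V(G_1) \cup \cdots \cup V(G_k)$ may fail to cover $V(G)$. By Remark~\ref{rem:edge-cov-without-isolated-v-cov}, every uncovered vertex must be isolated in $G$. The fix is to absorb these missing isolated vertices into one part of the covering, say $G_1$: set $V(G_1') := V(G_1) \cup \bigl(V(G) \setminus \bigcup_{i=1}^{k} V(G_i)\bigr)$ and $E(G_1') := E(G_1)$, and take $G_i' := G_i$ for $i \geq 2$.

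It remains to check that $G_1'$ is still $H$-colourable, which follows from the invariance of $H$-colourability under adjoining isolated vertices (recalled in Section~\ref{sec:pre}): any homomorphism $G_1 \to H$ extends to $G_1' \to H$ by sending each added vertex to an arbitrary fixed vertex of $H$. Since $G = G_1' \cup G_2' \cup \cdots \cup G_k'$ holds in the full vertex-and-edge sense, with each summand $H$-colourable, we obtain $\mathrm{C}(G;H) \leq k$, closing the argument.
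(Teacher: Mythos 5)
Your proof is correct and follows essentially the same route as the paper's: the first inequality is immediate from unpacking the definitions, and the second hinges on the observation of Remark~\ref{rem:edge-cov-without-isolated-v-cov} that only isolated vertices of $G$ can escape an edge covering, combined with the invariance of $H$-colourability under isolated vertices. The only cosmetic difference is that the paper deletes the isolated vertices of $G$ up front (using that both invariants are unaffected by this), whereas you keep them and absorb them into $G_1'$; both work.
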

\begin{proof}
    By Remark~\ref{rem:def-ob}(3), the inequality $\mathrm{C}(G;H)\geq\sigma_{\mathcal{H}}(G)$ always holds. To show the other inequality, first note that the covering number $\sigma_{\mathcal{H}}(G)$ and the hom-complexity $\mathrm{C}(G;H)$ do not change if we remove isolated vertices from $G$. Hence, we can suppose that $G$ has no isolated vertices. Set $k=\sigma_{\mathcal{H}}(G)$ and consider $G_1,\ldots,G_k$ subgraphs of $G$ such that $E(G)=E(G_1)\cup\cdots\cup E(G_k)$, and each $G_i$ is $H$-colourable. By Remark~\ref{rem:edge-cov-without-isolated-v-cov}, we have $V(G)=V(G_1)\cup\cdots\cup V(G_k)$. Therefore, $\mathrm{C}(G;H)\leq k=\sigma_{\mathcal{H}}(G)$.   
\end{proof}

In particular, Proposition~\ref{prop:hom-cov-number} shows that the hom-complexity $\text{C}(G;H)$ coincides with the least positive integer $k$ such that there exist subgraphs $G_1,\ldots,G_k$ of $G$ satisfying $E(G)=E(G_1)\cup\cdots\cup E(G_k)$, and each $G_i$ is $H$-colourable. Also, note that $\sigma_{\mathcal{H}}(G)$ coincides with the least positive integer $k$ such that there exist subgraphs $G_1,\ldots,G_k$ of $G$ satisfying $E(G)=E(G_1)\cup\cdots\cup E(G_k)$, $E(G_i)\cap E(G_j)=\varnothing$ for any $i\neq j$ (i.e., the sets $G_1,\ldots,G_k$ are pairwise disjoint), and each $G_i$ is $H$-colourable.   

\medskip We have the following remark.

\begin{remark}\label{rem:in-contrast-ic}
Let $\text{inj-}\mathcal{H}$ be the family of all subgraphs $L$ of $G$ such that there exists an injective homomorphism of $L$ to $H$. Note that, $\mathrm{IC}(G;H)\geq\sigma_{\text{inj-}\mathcal{H}}(G)$. However, Proposition~\ref{prop:hom-cov-number} does not hold for injective hom-complexity. For example, for $G=K_2\sqcup\{\ast\}$ and $H=K_2$, we have $\mathrm{IC}(G;H)=2$ and $\sigma_{\text{inj-}\mathcal{H}}(G)=1$. This example shows that $\mathrm{IC}(G;H)$ depends of the isolated points from $G$. In contrast, the covering number $\sigma_{\text{inj-}\mathcal{H}}(G)$ does not change if isolated vertices are removed from $G$.   
\end{remark} 

Recall that \[\omega(G)=\max\{j:~\text{$G$ admits a clique with $j$ vertices}\}\] denotes the clique number of $G$. In particular, for any clique $L$ of $G$, we have $\chi(L)=|V(L)|\leq \omega(G)$, and hence, there exists an injective homomorphism $L\to K_{\omega(G)}$.

\medskip Also, we recall several examples of covering numbers.

\begin{definition}[Clique covering number, $\ell$-particity, and $\ell$-partite dimension]\label{defn:cc-l-particity-l-partite-dim}
 Let $G$ be a simple graph.
 \begin{enumerate}
 \item[(1)] The \textit{clique covering number} $cc(G)$ of $G$ is the least positive integer $k$ such that there exists an edge covering with $k$ elements $\{G_1,\ldots,G_k\}$ of $G$ such that each $G_i$ is a clique. We set $cc(G)=\infty$ if no such integer $k$ exists (cf. \cite{erdos1966}, \cite{schwartz2022},  \cite{gregory1982}, \cite{schwartz2022}). Note that $cc(G)$ does not change if isolated vertices are removed from $G$. Furthermore, the inequality $cc(G)\leq |E(G)|$ always hold.   
  \item[(2)] For $\ell\geq 2$, the \textit{$\ell$-particity} $\beta_{\ell}(G)$ of $G$ is the least positive integer $k$ such that there exists an edge covering with $k$ elements $\{G_1,\ldots,G_k\}$ of $G$ such that each $G_i$ is a $\ell$-partite graph. We set $\beta_{\ell}(G)=\infty$ if no such integer $k$ exists (cf. \cite[p. 132]{harary1977}). The case $\ell=2$ was studied in \cite[p. 203]{harary1970} and \cite[p. 131]{harary1977}, and it is called the \textit{biparticity} of $G$, and denoted by $\beta(G)$.
     \item[(3)] For $\ell\geq 2$, the \textit{$\ell$-partite dimension} $\text{d}_{\ell}(G)$ of $G$ is the least positive integer $k$ such that there exists an edge covering with $k$ elements $\{G_1,\ldots,G_k\}$ of $G$ such that each $G_i$ is a $\ell$-clique (i.e., a complete $\ell$-partite graph). We set $\text{d}_{\ell}(G)=\infty$ if no such integer $k$ exists. The case $\ell=2$ was studied in \cite[p. 128]{fishburn1996}, and it is called the \textit{bipartite dimension} of $G$, and denoted by $\text{d}(G)$. 
 \end{enumerate} 
\end{definition}

 Let $\ell\geq 2$ and $G$ be a simple graph. Recall that $G$ is $\ell$-partite if and only if there exists a vertex-surjective homomorphism $G\to K_\ell$. We have the following statement.

\begin{proposition}\label{prop:complexity-lower-bound-cn}
 Let $G$ be a simple graph and $\ell\geq 2$. We have:
 \begin{enumerate}
     \item[(1)] If $G$ has no isolated vertices, then \[cc(G)\geq \mathrm{IC}(G;K_{\omega(G)})\geq \mathrm{C}(G;K_{\omega(G)}).\]
     \item[(2)] $\text{d}_{\ell}(G)\geq \beta_{\ell}(G)\geq \mathrm{C}(G;K_{\ell})$.
 \end{enumerate}
\end{proposition}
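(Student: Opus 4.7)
The plan is to handle the four inequalities in order, chaining together Remark~\ref{rem:def-ob}(1), Remark~\ref{rem:edge-cov-without-isolated-v-cov}, and Proposition~\ref{prop:hom-cov-number}. Each step reduces to observing that a covering witness of the appropriate type immediately supplies a (possibly injective) quasi-homomorphism to the relevant complete graph.

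For (1), the second inequality $\mathrm{IC}(G;K_{\omega(G)})\geq \mathrm{C}(G;K_{\omega(G)})$ is just Remark~\ref{rem:def-ob}(1). For the first inequality, I would set $k=cc(G)$ and take an optimal clique edge covering $\{L_1,\ldots,L_k\}$ of $G$. For each $i$, since $L_i$ is a clique, $|V(L_i)|\leq \omega(G)$, so any injection $V(L_i)\hookrightarrow V(K_{\omega(G)})$ is an injective homomorphism $f_i:L_i\to K_{\omega(G)}$. This already handles edges; to upgrade from an edge covering to a decomposition $G=L_1\cup\cdots\cup L_k$ in the sense of Definition~\ref{defn:union-graphs}, I would invoke the hypothesis that $G$ has no isolated vertices together with Remark~\ref{rem:edge-cov-without-isolated-v-cov}, which gives $V(G)=V(L_1)\cup\cdots\cup V(L_k)$. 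Hence $\{f_i:L_i\to K_{\omega(G)}\}_{i=1}^{k}$ is an injective quasi-homomorphism, yielding $\mathrm{IC}(G;K_{\omega(G)})\leq k = cc(G)$.

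For (2), the first inequality is immediate from the definitions: every $\ell$-clique is in particular an $\ell$-partite graph, so any optimal $\ell$-clique edge covering witnesses $\beta_{\ell}(G)\leq \mathrm{d}_{\ell}(G)$. For the second inequality, the cleanest route is through Proposition~\ref{prop:hom-cov-number}. Letting $\mathcal{H}$ denote the family of $K_\ell$-colourable subgraphs of $G$, note that every $\ell$-partite subgraph $L$ of $G$ belongs to $\mathcal{H}$, since by Definition~\ref{defn:l-partite}(1) there is a vertex-surjective (hence existing) homomorphism $L\to K_\ell$. Therefore any $\ell$-partite edge covering is an edge covering by elements of $\mathcal{H}$, which gives $\sigma_{\mathcal{H}}(G)\leq \beta_{\ell}(G)$, and Proposition~\ref{prop:hom-cov-number} then yields $\mathrm{C}(G;K_\ell)=\sigma_{\mathcal{H}}(G)\leq \beta_{\ell}(G)$.

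There is no real obstacle here; the only subtle point is the role of the no-isolated-vertices hypothesis in (1). It is needed precisely because injective hom-complexity is sensitive to isolated vertices (as highlighted in Remark~\ref{rem:in-contrast-ic}), so one must pass from the edge-level covering given by $cc(G)$ to a vertex-level decomposition using Remark~\ref{rem:edge-cov-without-isolated-v-cov} before one can read off an injective quasi-homomorphism. For (2) no such hypothesis is required because Proposition~\ref{prop:hom-cov-number} already absorbs the isolated-vertex issue on the hom-complexity side.
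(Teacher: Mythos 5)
Your proposal is correct and follows essentially the same route as the paper: for (1) it converts an optimal clique edge covering into an injective quasi-homomorphism to $K_{\omega(G)}$ via Remark~\ref{rem:edge-cov-without-isolated-v-cov}, and for (2) it passes through Proposition~\ref{prop:hom-cov-number} using that $\ell$-partite subgraphs are $K_\ell$-colourable. The only difference is that you spell out a few details (the explicit injection of each clique into $K_{\omega(G)}$, the trivial $\mathrm{d}_\ell\geq\beta_\ell$ step) that the paper leaves as parenthetical remarks.
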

\begin{proof}
    \noindent\begin{enumerate}
        \item[(1)] Suppose that $G$ has no isolated vertices. Set $cc(G)=k<\infty$ and consider $G_1,\ldots,G_k$ subgraphs of $G$ such that $E(G)=E(G_1)\cup\cdots\cup E(G_k)$, and each $G_i$ is a clique (and, of course, there exists an injective homomorphism $G_i\to K_{\omega(G)}$ for each $i$). By Remark~\ref{rem:edge-cov-without-isolated-v-cov}, we have that $V(G)=V(G_1)\cup\cdots\cup V(G_k)$. Therefore, $\mathrm{IC}(G;H)\leq k=cc(G)$. 

        The inequality $\mathrm{IC}(G;K_{\omega(G)})\geq \mathrm{C}(G;K_{\omega(G)})$ always holds. 
        \item[(2)] Since a graph $L$ is $\ell$-partite if and only if there exists a vertex-surjective homomorphism $L\to K_\ell$, by Proposition~\ref{prop:hom-cov-number}, we have $\beta_{\ell}(G)\geq \mathrm{C}(G;K_{\ell})$. The inequality $\text{d}_{\ell}(G)\geq \beta_{\ell}(G)$ always holds. 
    \end{enumerate}
\end{proof}

For instance, Proposition~\ref{prop:complexity-lower-bound-cn}(1) presents a lower bound for the clique covering number in terms of the injective hom-complexity. This bound improves, for some examples, the lower bound $\log_2(|V(G)|+1)$ presented in \cite{gyarfas1990}. In \cite[Theorem 2, p. 107]{erdos1966}, the authors present the upper bound $\lfloor n^2/4\rfloor$, where $n=|V(G)|$.  

\begin{example}\label{exam:cc-w2}
 Let $G$ be a simple graph without isolated vertices such that $\omega(G)=2$. By Proposition~\ref{prop:complexity-lower-bound-cn}(1) together with Theorem~\ref{thm:injective-category-compute}, we have \[cc(G)= \mathrm{IC}(G;2)=|E(G)|.\] Recall that the inequality $cc(G)\leq |E(G)|$ always hold. For instance:
 \begin{enumerate}
     \item[(1)] $cc(P_m)=m-1$ for any $m\geq 2$, and $cc(C_n)=n$ for any $n\geq 3$.
     \item[(2)] $cc(K_{p,q})= \mathrm{IC}(K_{p,q};2)=|E(K_{p,q})|=pq$ for any $p,q\geq 1$.
     \item[(3)] For the Gr\"{o}tzch graph $M_4$ we have $\omega(M_4)=2$, and thus $cc(M_4)= \mathrm{IC}(M_4;2)=|E(M_4)|=20$. On the other hand, observe that $|V(M_4)|=11$, $\log_2(12)\leq 4$ and $\lfloor 121/4\rfloor=30$.     
 \end{enumerate}
\end{example}

Also, Proposition~\ref{prop:complexity-lower-bound-cn}(1) together with Example~\ref{exem:complexity-kj-k2} implies a new lower bound for $cc(G)$. 

\begin{proposition}\label{prop:properties-cov-number}
  Let $G$ be a simple graph without isolated vertices such that $\omega(G)\geq 2$, then \[cc(G)\geq \lceil\log_{\omega(G)}\chi(G)\rceil.\]
\end{proposition}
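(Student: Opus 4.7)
The plan is to chain together two results already established in the paper: Proposition~\ref{prop:complexity-lower-bound-cn}(1) and the formula for $\mathrm{C}(G;K_n)$ from Example~\ref{exem:complexity-kj-k2}. Concretely, I would argue that under the hypotheses, the clique covering number dominates the hom-complexity $\mathrm{C}(G;K_{\omega(G)})$, and this hom-complexity is exactly $\lceil \log_{\omega(G)}\chi(G)\rceil$.

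First I would verify the side condition needed to apply Example~\ref{exem:complexity-kj-k2}, namely $\chi(G)\geq 2$. Since $\omega(G)\geq 2$, the graph $G$ contains a copy of $K_2$, so it has at least one edge, and therefore $\chi(G)\geq 2$. Next, because $G$ has no isolated vertices, Proposition~\ref{prop:complexity-lower-bound-cn}(1) applies and yields
\begin{equation*}
cc(G) \;\geq\; \mathrm{IC}(G;K_{\omega(G)}) \;\geq\; \mathrm{C}(G;K_{\omega(G)}).
\end{equation*}
Then I would invoke Example~\ref{exem:complexity-kj-k2} with $n=\omega(G)\geq 2$ and $\chi(G)\geq 2$ to evaluate the right-hand side as $\mathrm{C}(G;K_{\omega(G)})=\lceil \log_{\omega(G)}\chi(G)\rceil$. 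Stringing the two displays together gives the desired inequality
\begin{equation*}
cc(G)\;\geq\;\lceil \log_{\omega(G)}\chi(G)\rceil.
\end{equation*}

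Since both ingredients are already proved in the paper, there is no genuine obstacle here; the only things to check are the hypotheses $G$ has no isolated vertices (assumed), $\omega(G)\geq 2$ (assumed, and used both to make $K_{\omega(G)}$ a well-defined codomain containing at least an edge and to ensure the logarithm base exceeds $1$), and $\chi(G)\geq 2$ (automatic, as noted above). The proof therefore reduces to a one-line concatenation of the two earlier results, and this is precisely the \emph{new} lower bound advertised in the paragraph preceding the proposition.
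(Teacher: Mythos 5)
Your proof is correct and is exactly the argument the paper intends: the proposition is stated immediately after the sentence ``Proposition~\ref{prop:complexity-lower-bound-cn}(1) together with Example~\ref{exem:complexity-kj-k2} implies a new lower bound for $cc(G)$,'' which is precisely your chain $cc(G)\geq \mathrm{IC}(G;K_{\omega(G)})\geq \mathrm{C}(G;K_{\omega(G)})=\lceil\log_{\omega(G)}\chi(G)\rceil$. Your explicit check that $\omega(G)\geq 2$ forces $\chi(G)\geq 2$ is a worthwhile detail the paper leaves implicit.
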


This lower bound is sharp; for example, $cc(K_n)=1=\lceil\log_{n}n\rceil$.  

\medskip On the other hand, note that, in general, if $G\to K_\ell$ does not imply that $G$ is $\ell$-partite. For example, we have the inclusion $K_{\ell-1}\hookrightarrow K_\ell$ but 
$K_{\ell-1}$ is not $\ell$-partite because $|V(K_{\ell-1})|=\ell-1$. We observe the following remark.

\begin{remark}\label{rem:vertex-l-partite}
Let $\ell\geq 2$.\begin{enumerate}
    \item[(1)] For any simple graph $G$ with $E(G)\neq\varnothing$, $G$ is bipartite if and only if there exists a homomorphism $G\to K_2$ (and, of course, it is a vertex-surjective homomorphism).
    \item[(2)] Given a $\ell$-partite subgraph $L$ of $K_n$, there exists a $\ell$-clique $\widetilde{L}$ of $K_n$ such that $L\subseteq \widetilde{L}$.
\end{enumerate}      
\end{remark}

We have the following statement which says that the hom-complexity $\mathrm{C}(G;K_{\ell})$ coincides with the $\ell$-particity of $G$. In addition, the hom-complexity $\mathrm{C}(K_n;K_{2})$ coincides with the bipartite dimension of $K_n$.  

 \begin{theorem}\label{prop:biparticity-complexity-k2}
   Let $\ell\geq 2$. \begin{enumerate}
       \item[(1)] Let $G$ be a simple graph such that $\mathrm{C}(G;K_{\ell})\geq 2$. We have \[\beta_\ell(G)=\mathrm{C}(G;K_{\ell}).\]  
       \item[(2)]  Let $n\geq 2$. We have \[\mathrm{d}(K_n)=\beta(K_n)=\mathrm{C}(K_n;K_{2}).\] 
   \end{enumerate} 
 \end{theorem}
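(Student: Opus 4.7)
The plan is to prove each part by sandwiching with two inequalities, using Proposition~\ref{prop:complexity-lower-bound-cn}(2) for one side and the constructive Theorem~\ref{prop:covering-l-partite}(2) for the other.

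For part~(1), one direction is immediate: Proposition~\ref{prop:complexity-lower-bound-cn}(2) already gives $\beta_\ell(G)\geq \mathrm{C}(G;K_\ell)$. For the reverse inequality, I would set $k=\mathrm{C}(G;K_\ell)\geq 2$ and observe that since $\omega(K_\ell)=\chi(K_\ell)=\ell$, Corollary~\ref{cor:omega-equal-chi} yields $k=\lceil\log_{\ell}\chi(G)\rceil$; combined with $k\geq 2$ this forces $\ell^{k-1}<\chi(G)\leq \ell^{k}$, which is exactly the hypothesis of Theorem~\ref{prop:covering-l-partite}(2). That theorem then produces an optimal quasi-homomorphism $\{\hat{f}_i:\hat{G}_i\to K_\ell\}_{i=1}^{k}$ with each $\hat{f}_i$ vertex-surjective, i.e., each $\hat{G}_i$ an $\ell$-partite subgraph of $G$. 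Since $G=\hat{G}_1\cup\cdots\cup\hat{G}_k$ in particular gives $E(G)=E(\hat{G}_1)\cup\cdots\cup E(\hat{G}_k)$, this is an edge covering of $G$ by $k$ many $\ell$-partite subgraphs, whence $\beta_\ell(G)\leq k$.

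For part~(2), the case $n=2$ is trivial, since $K_2$ is itself a biclique, bipartite, and trivially $K_2$-colourable, so all three invariants equal $1$. For $n\geq 3$, we have $\chi(K_n)=n\geq 3>2$, hence $\mathrm{C}(K_n;K_2)\geq 2$, so part~(1) (with $\ell=2$) gives $\beta(K_n)=\mathrm{C}(K_n;K_2)$. It remains to show $\mathrm{d}(K_n)=\beta(K_n)$. The inequality $\mathrm{d}(K_n)\geq \beta(K_n)$ follows from Proposition~\ref{prop:complexity-lower-bound-cn}(2) (every biclique is bipartite). For the converse, take an optimal edge covering $\{G_1,\ldots,G_k\}$ of $K_n$ by bipartite subgraphs; for each $G_i$ fix a bipartition $V(G_i)=A_i\sqcup B_i$ and form $\widetilde{G_i}:=K_{|A_i|,|B_i|}$ on the parts $A_i,B_i$. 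Because $K_n$ is complete, every pair in $A_i\times B_i$ is already an edge of $K_n$, so $\widetilde{G_i}$ is a biclique subgraph of $K_n$ containing $G_i$. Therefore $E(K_n)=\bigcup_{i=1}^{k}E(G_i)\subseteq \bigcup_{i=1}^{k}E(\widetilde{G_i})\subseteq E(K_n)$, giving an edge covering of $K_n$ by $k$ bicliques, and hence $\mathrm{d}(K_n)\leq \beta(K_n)$.

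The main obstacle is recognising that Theorem~\ref{prop:covering-l-partite}(2) is exactly the engine required for part~(1): the genuinely non-trivial step, namely upgrading "$K_\ell$-colourable piece" to "$\ell$-partite piece" (which is not automatic, as explained before the statement of Theorem~\ref{prop:covering-l-partite}), has already been discharged there by an inductive construction. Once that is in hand, both parts reduce to routine verifications, and the extension-to-biclique argument for (2) works precisely because the ambient graph $K_n$ is complete, so no edges between the two parts of any chosen bipartition are missing.
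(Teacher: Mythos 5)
Your proposal is correct and follows essentially the same route as the paper: part (1) is exactly the paper's argument (Proposition~\ref{prop:complexity-lower-bound-cn}(2) for one inequality, Theorem~\ref{prop:covering-l-partite}(2) for the other), and part (2) rests on the same key fact, namely that a bipartite subgraph of $K_n$ extends to a biclique of $K_n$ (the paper's Remark~\ref{rem:vertex-l-partite}(2), which you reprove inline). The only cosmetic difference is in (2): you show $\mathrm{d}(K_n)\leq\beta(K_n)$ directly and then invoke part (1), whereas the paper closes the cycle $\mathrm{d}(K_n)\geq\beta(K_n)\geq\mathrm{C}(K_n;K_2)\geq\mathrm{d}(K_n)$ by extending the pieces of an optimal quasi-homomorphism.
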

 \begin{proof}
 \noindent\begin{enumerate}
     \item[(1)]  Suppose that $\mathrm{C}(G;K_{\ell})=k<\infty$ (i.e., $\ell^{k-1}<\chi(G)\leq\ell^k$ with $k\geq 2$). By Theorem~\ref{prop:covering-l-partite}(2), we can consider $G_1,\ldots,G_k$ subgraphs of $G$ such that $G=G_1\cup\cdots\cup G_k$ and each $G_i$ is $\ell$-partite. Therefore, $\beta_\ell(G)\leq k=\mathrm{C}(G;K_{\ell})$. The other inequality $\beta_\ell(G)\geq \mathrm{C}(G;K_{\ell})$ always holds (see Proposition~\ref{prop:complexity-lower-bound-cn}(2)). Thus, we conclude that $\beta_\ell(G)=\mathrm{C}(G;K_{\ell})$.
     \item[(2)] Recall that the inequalities $\mathrm{d}(K_n)\geq \beta(K_n)\geq \mathrm{C}(K_n;K_{2})$ always hold (see Proposition~\ref{prop:complexity-lower-bound-cn}(2)). We will check that $\mathrm{C}(K_n;K_{2})\geq \mathrm{d}(K_n)$. Indeed, suppose that $\mathrm{C}(K_n;K_{2})=k<\infty$. Consider $G_1,\ldots,G_k$ subgraphs of $K_n$ such that $K_n=G_1\cup\cdots\cup G_k$ and there exists a homomorphism $G_i\to K_2$ for each $i$. If $k=1$, then $n=2$ and thus $\mathrm{d}(K_2)=1$. 
     
     Now, suppose that $k\geq 2$. Note that $E(G_i)\neq\varnothing$ for each $i$ (otherwise, $\mathrm{C}(K_n;K_{2})<k$). By Remark~\ref{rem:vertex-l-partite}(1), each $G_i$ is a bipartite subgraph of $K_n$. Then, by Remark~\ref{rem:vertex-l-partite}(2), there exists a biclique $\widetilde{G}_i$ of $K_n$ such that $G_i\subseteq \widetilde{G}_i$ for each $i$. Note that, $K_n=\widetilde{G}_1\cup\cdots\cup \widetilde{G}_k$. Therefore, $\mathrm{d}(K_n)\leq k=\mathrm{C}(K_n;K_{2})$.
 \end{enumerate}
    \end{proof}

\begin{remark}\label{rem:bipa-dim-dif-complex}
Note that the statement $\mathrm{d}(G)=\mathrm{C}(G;K_{2})$ does not hold for all $G$. For example, for $G=P_4$, the path with 4 vertices, we have $\mathrm{C}(P_4;K_{2})=1$ and $\mathrm{d}(P_4)=2$ (since $P_4$ is not a biclique).       
\end{remark} 

As a direct consequence of Theorem~\ref{prop:biparticity-complexity-k2}(1) together with Example~\ref{exem:complexity-kj-k2}, we recover the formula for $\beta_\ell(G)$ obtained by Harary-Hsu-Miller in \cite[p. 132]{harary1977}. Additionally, Theorem~\ref{prop:biparticity-complexity-k2}(2) together with Example~\ref{exem:complexity-kj-k2} recovers the formula for $\mathrm{d}(K_n)$ obtained by Fishburn-Hammer in \cite[Lemma 1, p. 130]{fishburn1996}.  

\begin{proposition}\label{cor:biparticity-formula}
Let $\ell\geq 2$ \begin{enumerate}
       \item[(1)] Let $G$ be a simple graph such that $\mathrm{C}(G;K_{\ell})\geq 2$ and $\chi(G)\geq 2$. We have \[\beta_\ell(G)=\lceil\log_{\ell}\chi(G)\rceil.\]  
       \item[(2)] Let $n\geq 2$. We have \[\mathrm{d}(K_n)=\lceil\log_{2}n\rceil.\]  
        \end{enumerate}
\end{proposition}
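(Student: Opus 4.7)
The plan is to obtain both formulas by directly composing the two results indicated in the paragraph preceding the statement, namely Theorem~\ref{prop:biparticity-complexity-k2} (which identifies the covering invariants with hom-complexities into complete graphs) with the explicit computation of $\mathrm{C}(-;K_\ell)$ provided in Example~\ref{exem:complexity-kj-k2}.

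For part (1), I would start from the equality $\beta_\ell(G)=\mathrm{C}(G;K_\ell)$, which is exactly the content of Theorem~\ref{prop:biparticity-complexity-k2}(1) under the hypothesis $\mathrm{C}(G;K_\ell)\geq 2$. Then, since $\chi(G)\geq 2$ and $\ell\geq 2$, Example~\ref{exem:complexity-kj-k2} applies and yields $\mathrm{C}(G;K_\ell)=\lceil\log_\ell\chi(G)\rceil$. Chaining these two equalities gives $\beta_\ell(G)=\lceil\log_\ell\chi(G)\rceil$, as desired.

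For part (2), the argument is analogous. Theorem~\ref{prop:biparticity-complexity-k2}(2) gives $\mathrm{d}(K_n)=\mathrm{C}(K_n;K_2)$ for every $n\geq 2$. Since $\chi(K_n)=n\geq 2$, Example~\ref{exem:complexity-kj-k2} yields $\mathrm{C}(K_n;K_2)=\lceil\log_2 n\rceil$, and combining the two produces $\mathrm{d}(K_n)=\lceil\log_2 n\rceil$. A minor sanity check is needed for the degenerate case $n=2$, where both sides equal $1$ and the hypothesis $\mathrm{C}(G;K_\ell)\geq 2$ of part~(1) is not required here because Theorem~\ref{prop:biparticity-complexity-k2}(2) is stated without it.

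There is essentially no obstacle: the theorem is a \emph{corollary} in substance, not a new computation, and all the heavy lifting (the decomposition into $\ell$-partite subgraphs in Theorem~\ref{prop:covering-l-partite} and the bracketing of $\mathrm{C}(G;K_\ell)$ by chromatic numbers in Corollary~\ref{cor:omega-equal-chi}) has already been carried out. The only care needed is to verify the applicability of Example~\ref{exem:complexity-kj-k2}, which requires $\chi(G)\geq 2$ in part~(1) and is automatic in part~(2) from $n\geq 2$; so the proof can be written in two or three lines per item.
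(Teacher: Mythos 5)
Your proposal is correct and matches the paper's own derivation exactly: the paper obtains both formulas by combining Theorem~\ref{prop:biparticity-complexity-k2} with the computation $\mathrm{C}(G;K_\ell)=\lceil\log_{\ell}\chi(G)\rceil$ from Example~\ref{exem:complexity-kj-k2}. No gaps; the $n=2$ case you flag is already covered by the example since $\chi(K_2)=2\geq 2$.
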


We close this section with the following remark, which presents a direct relation between (injective) hom-complexity and the sectional number.

\begin{remark}[(Injective) hom-complexity and sectional number]\label{rem:complexity-sectionalnumber}
Let $G$ and $H$ be graphs. Given a homomorphism $f:G\to H$, we have \[\mathrm{C}(H;G)\leq \mathrm{IC}(H;G)\leq \text{sec}(f).\] Here, $\text{sec}(f)$ denotes the sectional number of $f$ as introduced in \cite{zapata2023}. Specifically, $\text{sec}(f)$ is the least positive integer $k$ such that there exist subgraphs $H_1\ldots,H_k$ of $H$ with $H=H_1\cup\cdots\cup H_k$, and for each $H_i$, there exists a homomorphism $\sigma_i:H_i\to G$ such that $f\circ\sigma_i=\mathrm{incl}_{H_i}$ (and thus each $\sigma_i:H_i\to G$ is an injective homomorphism), where $\mathrm{incl}_{H_i}:H_i\hookrightarrow H$ is the inclusion homomorphism.  
\end{remark}

\section{Designing Optimal Quasi-Homomorphisms}\label{sec:applica}
As mentioned in the Introduction, this work is motivated by a fundamental desire to present a well-defined methodology to address the \aspas{complexity} of the data migration problem \cite{hussein2021}, \cite{spivak2012}. For this reason, in this section, we design optimal quasi-homomorphisms for a concrete example.      

\medskip Consider the graphs $G$ and $H$ depicted in Figure~\ref{figure:exam-G-H}, where we have: \begin{align*}
    V(G)&=\{v_1,\ldots,v_8\}\\ 
    E(G)&=\{v_1v_2,v_1v_3,v_1v_4,v_1v_6,v_2v_3,v_2v_4,v_2v_7,v_3v_4,v_3v_8,v_4v_5\}\\ 
     V(H)&=\{u_1,\ldots,u_6\}\\ 
      E(H)&=\{u_1u_2,u_1u_3,u_1u_4,u_1u_5,u_2u_4,u_2u_5,u_2u_6\}\\
\end{align*} 

\begin{figure}[h!]
\centering
\begin{align*}
 \begin{tikzpicture}
\Vertex[x=1, y=1, size=0.2,label=$v_1$,position=above,color=black]{A} 
\Vertex[x=2, y=2, size=0.2,label=$v_6$,position=above,color=black]{B}
\Vertex[x=1, y=-1, size=0.2,label=$v_2$,position=below,color=black]{C} 
\Vertex[x=2, y=-2, size=0.2,label=$v_7$,position=above,color=black]{D} 
\Vertex[x=-1, y=-1, size=0.2,label=$v_3$,position=below,color=black]{E} 
\Vertex[x=-2, y=-2, size=0.2,label=$v_8$,position=above,color=black]{F} 
\Vertex[x=-1, y=1, size=0.2,label=$v_4$,position=above,color=black]{G} 
\Vertex[x=-2, y=2, size=0.2,label=$v_5$,position=above,color=black]{H} 
\Vertex[x=2, y=-2, size=0.2,label=$G$,position=left,distance=1.5cm,color=black]{I} 
\Edge[color=black](A)(B)
\Edge[color=black](A)(C)
\Edge[color=black](A)(G)
\Edge[color=black](A)(E)
\Edge[color=black](C)(G)
\Edge[color=black](C)(E)
\Edge[color=black](C)(D)
\Edge[color=black](E)(F)
\Edge[color=black](E)(G)
\Edge[color=black](G)(H)
  \end{tikzpicture}  & &
 \begin{tikzpicture}
\Vertex[size=0.2,label=$u_1$,position=above,color=black]{A} 
\Vertex[x=4, size=0.2,label=$u_2$,position=above,color=black]{B}
\Vertex[x=2, y=3, size=0.2,label=$u_3$,position=above,color=black]{C} 
\Vertex[x=2, y=1, size=0.2,label=$u_4$,position=above,color=black]{D} 
\Vertex[x=2, y=-1, size=0.2,label=$u_5$,position=above,color=black]{E} 
\Vertex[x=2, y=-3, size=0.2,label=$u_6$,position=above,color=black]{F} 
\Vertex[x=2, y=-3, size=0.2,label=$H$,position=below,distance=0.5cm,color=black]{G} 
\Edge[color=black](A)(B)
\Edge[color=black](A)(C)
\Edge[color=black](A)(D)
\Edge[color=black](A)(E)
\Edge[color=black](B)(F)
\Edge[color=black](B)(E)
\Edge[color=black](B)(D)
  \end{tikzpicture}  
\end{align*}
\caption{The graphs $G$ and $H$.}\label{figure:exam-G-H}
\end{figure}

We find that $\chi(G)=4$ and $\omega(H)=\chi(H)=3$, leading to the conclusion that $\mathrm{C}(G;H)=2$ (see Corollary~\ref{cor:omega-equal-chi}). 

\medskip Designing an optimal quasi-homomorphism from $G$ to $H$ is greatly simplified by following two main steps:
\begin{itemize}
    \item[S1.] First, we obtain an optimal quasi-homomorphism from $K_4$ to $K_3$.
    \item[S2.] Next, we design an optimal quasi-homomorphism from $G$ to $H$.
\end{itemize}

In Step $S1$, we utilize the proof of Proposition~\ref{prop:upper-bound}. Here, $j=4, i=3, \ell=2, r=1$, so we have \[K_4=\left(K_2\sqcup K_1\sqcup K_1\right)\cup K_{2,1,1},\] where we consider \begin{align*}
    V(K_2)&=\{1,4\},\\
    E(K_2)&=\{14\},\\
    V(K_{2,1,1})&=V_1\sqcup V_2\sqcup V_3\text{ with $V_1=\{1,4\}, V_2=\{2\}$, $V_3=\{3\}$, }\\
    E(K_{2,1,1})&=\{34,31,32,24,21\}.\\
\end{align*} Note that $\mathrm{C}(K_4;K_3)=2$ and the collection $\mathcal{K}=\{f_1:K_2\to K_3,f_2:K_{2,1,1}\to K_3\}$ is an optimal quasi-homomorphism from $K_4$ to $K_3$, where $f_1(1)=1$ and $f_1(4)=2$,  and $f_2(V_i)=\{i\}$ for each $i=1,2,3$. 

\medskip In Step $S2$, we apply the proof of Theorem~\ref{thm:general-upper-bound} along with Theorem~\ref{prop:complexity-subgraphs}(1). Define $f:G\to K_4$ as a $4$-coloring of the graph $G$ defined by $f(v_i)=i$ for each $i=1,2,3,4$, with $f(v_5)=f(v_8)=f(v_7)=1$ and $f(v_6)=2$. Additionally, we recognize that the graph $\left(\{u_1,u_2,u_4\},\{u_1u_2,u_1u_4,u_2u_4\}\right)$ is a copy of $K_3$ in $H$ via the isomorphism $1\leftrightarrow u_1, 2\leftrightarrow u_2, 3\leftrightarrow u_4$. By the proof of Theorem~\ref{prop:complexity-subgraphs}(1), $\mathcal{K}$ induces a quasi-homomorphism from $K_4$ to $H$ given by \[\mathcal{K}'=\{g_1:=\iota\circ f_1:K_2\to H,g_2:=\iota\circ f_2:K_{2,1,1}\to H\},\] where $\iota:K_3\hookrightarrow H$ is the inclusion homomorphism. Subsequently, by the same proof of Theorem~\ref{prop:complexity-subgraphs}(1), $\mathcal{K}'$ induces a quasi-homomorphism from $G$ to $H$ represented by \[\mathcal{M}=\{h_1:G_1\to H,h_2:G_2\to H\},\] where $G_1=f^{-1}(K_2)$ and $G_2=f^{-1}(K_{2,1,1})$; with $h_1=g_1\circ f_\mid$ and $h_2=g_2\circ f_\mid$. Explicitly, we have \begin{align*}
    V(G_1)&=\{v_1,v_4,v_5,v_7,v_8\},\\
    E(G_1)&=\{v_1v_4,v_4v_5\},\\
    V(G_2)&=V(G),\\
    E(G_2)&=\{v_1v_2,v_1v_3,v_1v_6,v_2v_3,v_2v_4,v_2v_7,v_3v_4,v_3v_8\}.\\
\end{align*} Note that $\mathcal{M}$ is optimal because $2=\mathrm{C}(G;H)$. 

\begin{remark}\label{rem:final-remark}
From the construction above, it is evident that \[h_1(G_1)\cup h_2(G_2)=K_3\subsetneq H.\] 
\end{remark}

Finally, we propose the following future work.

\begin{remark}[Future work]\label{rem:future-work}
 \noindent\begin{enumerate}
     \item[(1)] Based on Remark~\ref{rem:final-remark}, we introduce a strong notion of hom-complexity. Given two graphs $G$ and $H$, the \textit{strong hom-complexity} from $G$ to $H$, denoted $\mathrm{sC}(G;H)$, is defined as the least positive integer $k$ such that there exist subgraphs  $G_1,\ldots,G_k$ of $G$ satisfying $G=G_1\cup\cdots\cup G_k$, and for each $G_i$, there exists a homomorphism $f_i:G_i\to H$ such that \[H=f_1(G_1)\cup\cdots\cup f_k(G_k).\] If no such integer $k$ exists, we set $\text{sC}(G;H)=\infty$. It follows that $\mathrm{C}(G;H)\leq \mathrm{sC}(G;H)$. We propose studying this notion of strong hom-complexity further. 
\item[(2)] As mentioned in the introduction, this work is motivated by the data migration problem. One direction for future research is to develop the notion of hom-complexity in higher dimensions$-$for example,  for hypergraphs, simplicial complexes, and related structures. 
\item[(3)] For $n,k\in\mathbb{Z}$, with $n\geq 2$ and $1\leq k\leq n/2$, the Kneser graph $\Gamma_{k,n}$ is the graph whose vertices are all $k$-subsets of $[n]$, with edges connecting pairs of disjoint $k$-subsets. The Kneser conjecture, posed in 1955, became a foundational problem in the field of Combinatorics. It asks to show that $\chi(\Gamma_{k,n})\geq n-2k+2$. In 1978, L\'{a}szl\'{o} Lov\'{a}sz solved the Kneser conjecture by employing the Borsuk–Ulam theorem from Algebraic Topology. Similar and related approaches have also relied on variants of the Borsuk–Ulam theorem. We propose the following question: Is there a graph $H$ such that $\mathrm{C}(\Gamma_{k,n};H)>\lceil\log_{\chi(H)}(n-2k+1)\rceil$ (here $\omega(H)=
\chi(H)$) or $\mathrm{C}(H;\Gamma_{k,n})<\lceil\log_{n-2k+1}\chi(H)\rceil$? In the affirmative case, by Corollary~\ref{cor:complejidad-chro}, we obtain that $\chi(\Gamma_{k,n})\geq n-2k+2$. This would constitute a new solution to the Kneser conjecture.      
 \end{enumerate}   
\end{remark}

\section*{Acknowledgment}
The first author would like to thank grants \#2023/16525-7 and \#2022/16695-7 from the S\~{a}o Paulo Research Foundation (\textsc{fapesp}) for financial support.

\section*{Conflict of Interest Statement}
The authors declare that there are no conflicts of interest.

\bibliographystyle{plain}

\end{document}